\newcommand{\PH}[1]{\mathrm{PH}^{#1}(\Omega)}
\title{Sensitivity analysis for 3D Maxwell's equations and its use in the resolution of an inverse medium problem at fixed frequency}
\keywords{Inverse medium problem, 3D Maxwell equations, small-amplitude inhomogeneities, Gâteaux derivative, integral equation, edge finite elements, computer science}
\begin{document}

	\maketitle

	\begin{abstract}
		This paper deals with the reconstruction of small-amplitude perturbations in the electric properties (permittivity and conductivity) of a medium from boundary measurements of the electric field at a fixed frequency. The underlying model are the three-dimensional time-harmonic Maxwell equations in the electric field. Sensitivity analysis with respect to the parameters is performed, and explicit relations between the boundary measurements and the characteristics of the perturbations are found from an appropriate integral equation and extensive numerical simulations in 3D. The resulting non-iterative algorithm allows to retrieve efficiently the center and volume of the perturbations in various situations from the simple sphere to a realistic model of the human head.
	\end{abstract}

	\displaykeywords

	\section{Introduction}

	The study of dielectric properties of biological tissues or materials is of great interest in medical or industrial applications. The dielectric behavior of a tissue and its interaction with electromagnetic fields are able to describe and provide information about its characteristics and composition. This information can be used to develop new noninvasive modalities in many practical applications of electric fields in agriculture, bioengineering and medical diagnosis. Dielectric properties of biological tissues are frequency-dependent or dispersive, and experimental investigation has shown that they vary with respect to low or high frequencies of the applied electric field or current (e.g.~\cite{Alanen99, Gabriel09}). Among the different imaging modalities based on electric fields, we may cite Electrical Impedance Tomography (EIT) which operates at frequencies between \num{10} and \SI{100}{\kilo\hertz} and Microwave Imaging between \SI{300}{\mega\hertz} and \SI{300}{\giga\hertz}. The principle of EIT is to provide the electrical permittivity and conductivity inside a body from simultaneous measurements of electrical currents and potentials at the boundary. With regard to medical applications, microwave imaging is studied with the aim of detecting and monitoring cerebrovascular accidents (or strokes). Indeed, strokes result in variations of the dielectric properties of the affected tissues, and experimental research has found that the contrast of dielectric parameters between the abnormal and normal cerebral tissues can be imaged within the microwave spectrum (at frequencies of the order of \SI{1}{\giga\hertz}). New devices based on these properties are currently designed and studied~\cite{Semenov14, Tournier17}. Microwave breast imaging offers also a promising alternative method to mammography~\cite{Kwon16}. Compared to other medical imaging technologies such as Magnetic Resonance Imaging (MRI) and Computarized Tomography (CT-scan), EIT and microwave imaging have a low resolution due to the ill-posed nature of the image reconstruction problem. There is a lot of interest (low cost device, harmless procedure, \ldots) in finding ways to improve their resolution and these modalities are areas of active research.

	From a mathematical point of view, one has to deal with the theoretical and numerical study of an inverse medium problem. The goal is to retrieve the complex refractive index of a medium, namely the electric permittivity (real part) and conductivity (imaginary part) from boundary measurements at a fixed frequency. This inverse problem is severely ill-posed (e.g~\cite{RomanovKabanikhin}). Indeed, coefficients of elliptic problems (like the conductivity equation) in a bounded domain are uniquely determined by the entire (scalar or vector) Dirichlet-to-Neumann map on the whole boundary of the domain which is in general not available in practical applications. The fundamental example of parameter reconstruction is Calderón's inverse conductivity problem~\cite{Calderon}. The theoretical and numerical study of the EIT inverse problem has also been extensively addressed in the last two decades. We refer for instance to~\cite{Borcea,Ammari09,SeoWoo,Ammari17} and references therein.

	In this paper, we focus on the inverse medium problem associated with the time-harmonic 3D Maxwell equations formulated in the electric field with a possible application in microwave imaging. For uniqueness and stability results for the Maxwell system from total or partial data, we refer the reader for instance to the works of Ola, Païvärinta and Somersalo \cite{Ola93}, Caro \textit{et al} \cite{Caro10, CaroZhou}, Kenig, Salo and Uhlman \cite{KenigSaloUhlmann} and references therein. In practice, only partial information on the (vector) Dirichlet-to-Neumann map is available. The challenging issues are thus to provide numerical methods for reconstructing the dielectric properties of a medium from a finite number of boundary measurements of the electric field. A classical way consists in formulating the inverse problem as the minimization of a cost function representing the difference between the measured and predicted fields. To solve the minimization problem, a gradient-based algorithm is currently used. For instance, Beilina \textit{et al}~(e.g. \cite{Beilina11,Beilina16}) have developed an adaptive finite element method based on a posteriori estimates for the simultaneous reconstruction of the real-valued electric permittivity and magnetic permeability functions of the 3D Maxwell's system. De Buhan and Darbas~\cite{deBuhanDarbas} have combined the quasi-Newton BFGS method and an iterative process (called the Adaptive Eigenspace Inversion) for determining the complex dielectric permittivity of a medium with 2D numerical validations. Another way to express the inverse medium problem is to search small anomalies in the electric parameters on a known background. We can cite the significant results of Ammari \textit{et al}~(e.g. \cite{AmmariKang,AVV01,AmmariVolkov}) who have derived small-volume expansions of the electromagnetic field, the volume of the imperfections being the asymptotic parameter. This yields constructive numerical methods for the localization of small-volume electromagnetic defects from measurements on a part of the boundary (see e.g. \cite{AschMefire} for 3D numerical results). This asymptotic approach has also been combined with an exact controllability method for retrieving small-amplitude perturbations in the permeability of a medium~\cite{Ammari03}. In this case, the time-dependent Maxwell equations and dynamic boundary measurements are considered. The performance of the reconstruction method in 2D has been addressed in~\cite{DarbasLohrengel}. In the present work, our aim is to detect and identify small-amplitude perturbations in the dielectric parameters of a medium from (time-independent) boundary measurements of the electric field. In this sense, our work falls into the previous class of approaches. We propose to investigate the problem from a different point of view. Our reconstruction method is based on explicit relations between sensitivity (with respect to the physical parameters) and characteristics of the imperfections. Sensitivity is the derivation of a given quantity (cost functional, physical field, \ldots) with respect to the parameters. Sensitivity gives an interesting tool for understanding the impact of local changes in interior parameters on the observed boundary measurements at the surface of the studied object. For instance, sensitivity information has been recently used to answer concrete clinic questions in EEG (electroencephalography) for neonates~\cite{MBE18} or to design resolution-based discretizations of the conductivity space in EIT~\cite{WinklerRieder}. In practical applications, the parameters are in general discretized, for instance by P0 or P1 finite elements. This leads to a Jacobian matrix which can be used to find the critical points of some least-square functional \cite{Dorn_etal_02,Dehghani_etal_09}. Another approach that is widely studied, is the topological sensitivity (or shape sensitivity) with respect to the shape of a perturbation in the parameters (see e.g. \cite{Amstutz06,Ren_etal_18} for EIT). The topological sensitivity is also used for the detection and shape identification of scatterers (e.g \cite{Bonnet_etal_13} and \cite{LeLouer1, LeLouer2} for acoustics and electromagnetism respectively). Data are in this case measurements of the far-field pattern of the scattered field.

	In the present paper, the aim is to investigate the impact of small-amplitude perturbations in the dielectric parameters of a medium on boundary measurements of the electric field. The novelty lies in proposing a rigourous sensitivity analysis of the electric field with respect to the variations of the electric permittivity and conductivity, noticing that the perturbation in the measurements is proportionnal to sensitivity for small-amplitudes of the parameters. We address both theoretical and numerical aspects. The sensitivity analysis is the first step for developing a new non- iterative inversion algorithm that allows to determine the location and volume of small-amplitude anomalies from boundary measurements of the perturbed electric field. To our knowledge, it's the first time that such a sensitivity analysis with respect to parameters (and not to the shape) is realized for solving an inverse electromagnetic medium problem.

	The remainder of the paper is organized as follows. In \autoref{section2}, we present the forward problem under consideration and define the functional setting. In \autoref{sec:sensitivity}, we propose a theoretical sensitivity analysis of the electric field with respect to the electric permittivity and/or conductivity of a medium which is illustrated by numerical simulations. \autoref{sensitivity_sec} is devoted to the sensitivity analysis in the case of a constant background which allows to write the sensitivity boundary data as solution of an integral equation. In \autoref{sec:inverse}, we explain how to use the previous results for solving an inverse medium problem. The localization procedure is described in \autoref{sec:algo}, and various three-dimensional numerical results are reported to illustrate the method. Finally, we give some conclusions and perspectives in the last section.

	\section{The forward problem}
	\label{section2}
	\subsection{Time-harmonic Maxwell's equations}

	Let $\Omega$ denote a bounded and simply connected domain in $\R^3$ of Lipschitz boundary $\Gamma = \partial\Omega$. The unit outward normal to $\Omega$ is denoted by $\bfn$. We introduce the vector spaces
	\begin{align*}
		H(\curl) &= \set*{\bfu \in L^2(\Omega)^3}{\curl\bfu \in L^2(\Omega)^3}, \\
		Y(\Gamma) &= \set*{\bff \in H^{-1/2}(\Gamma)^3}{\Exists{\bfu \in H(\curl)} \bfu \times \bfn = \bff}.
	\end{align*}

	We are interested in time-harmonic Maxwell's equations with Neumann boundary condition:
	\begin{equation}
		\label{eq:maxwell}
		\left\{
		\begin{array}{rcl@{\hspace{4\tabcolsep}}l}
			\curl\curl\bfE - k^2\kappa\bfE &=& \bfF, & \stext[r]{in} \Omega, \\
			\curl\bfE \times \bfn &=& \bfg, & \stext[r]{on} \Gamma.
		\end{array}
		\right.
		\tag{$\mcM$}
	\end{equation}
	Here, $\bfE$ denotes the electric field intensity in $\Omega$, and the fields $\bfF$ and $\bfg$ are given source terms in, respectively, $L^2(\Omega)^3$ and $Y(\Gamma)$. The number $k \coloneqq \omega\sqrt{\mu_0\eps_0}$ is the wavenumber with $\omega$ the wave angular frequency, $\mu_0$ and $\eps_0$ respectively the magnetic permeability and electric permittivity in vacuum. We assume that the magnetic permeability in $\Omega$ is equal to $\mu_0$. Let $\eps$ and $\sigma$ denote, respectively, the electric permittivity and conductivity in $\Omega$. The refractive index $\kappa$ of the medium in $\Omega$ is defined by:
	\[\kappa(\bfx) = \frac{1}{\eps_0}\left(\eps(\bfx) + i\frac{\sigma(\bfx)}{\omega}\right), \quad \bfx \in \Omega.\]

	We assume that $\Omega$ is decomposed into $P$ connected Lipschitz subdomains, denoted by $\Omega_p$ for $1 \leq p \leq P$, such that
	\[\bar{\Omega} = \bigcup_{p=1}^P \bar{\Omega}_p \quad \text{and} \quad \Omega_p \cap \Omega_q = \emptyset, \quad \sforall p \neq q.\]
	Moreover, the following assumptions are made on the parameter $\kappa$
	\begin{equation}
		\label{eq:hyp_kappa}
		\left\{
		\begin{array}{l}
			\Forall{p \in \zinterval{1}{P}} \restriction{\kappa}{\Omega_p} \in H^3(\Omega_p), \\
			\Exists{\alpha_R > 0} \real{\kappa} \geq \alpha_R \stext{in} \Omega, \\
			\Exists{\alpha_I > 0} \Forall{p \in \zinterval{1}{P}} \stext[r]{either} \imag*{\restriction{\kappa}{\Omega_p}} \geq \alpha_I \stext{or} \imag*{\restriction{\kappa}{\Omega_p}} = 0, \\
			\imag{\kappa} \not\equiv 0 \stext{in} \Omega.
		\end{array}
		\right.
		\tag{$\mathcal{H}_\kappa$}
	\end{equation}

	The variational formulation of \eqref{eq:maxwell} is
	\begin{equation}
		\label{eq:maxwell_var}
		\left\{
		\begin{array}{l}
			\stext[r]{Find} \bfE \in H(\curl) \stext[l]{such that} \\
			\dotprod{\curl\bfE}{\curl\bfphi}{} - k^2\dotprod{\kappa\bfE}{\bfphi}{} = \dotprod{\bfF}{\bfphi}{} + \duality{\bfg}{\bfphi_T}{\Gamma}, \sforall \bfphi \in H(\curl),
		\end{array}
		\right.
		\tag{$\mcM_v$}
	\end{equation}
	where $\dotprod{}{}{}$ denotes the dot-product in $L^2(\Omega)^3$ and $\duality{}{}{\Gamma}$ denotes the dual product from $Y(\Gamma)$ to $Y(\Gamma)'$. Here, ${(\cdot)}_T$ denotes the extension to $H(\curl)$ of the map:
	\[\bfv \mapsto \bfn \times (\restriction{\bfv}{\Gamma} \times \bfn),\]
	classically defined on $\mcC(\bar{\Omega})^3$ (see \cite[Theorem~3.31]{Monk03}).

	\begin{theorem}
		\label{thm:sol_maxwell}
		Under the above assumptions, the problem~\eqref{eq:maxwell_var} admits a unique solution $\bfE$ in $H(\curl)$ depending continuously on $\bfF$ and $\bfg$.
	\end{theorem}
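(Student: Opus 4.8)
The plan is to recast the variational formulation~\eqref{eq:maxwell_var} as the operator equation $a(\bfE,\bfphi)=L(\bfphi)$ for every $\bfphi\in H(\curl)$, with the bounded sesquilinear form $a(\bfu,\bfv)=\dotprod{\curl\bfu}{\curl\bfv}{}-k^2\dotprod{\kappa\bfu}{\bfv}{}$ and the antilinear form $L(\bfphi)=\dotprod{\bfF}{\bfphi}{}+\duality{\bfg}{\bfphi_T}{\Gamma}$, and to solve it by the Fredholm alternative. The first, routine step is to check that $a$ is bounded on $H(\curl)\times H(\curl)$, which follows from $\kappa\in L^\infty(\Omega)$ (a consequence of the piecewise $H^3$ regularity in~\eqref{eq:hyp_kappa}), and that $L$ is bounded, using $\bfF\in L^2(\Omega)^3$ together with the continuity of the tangential trace $\bfphi\mapsto\bfphi_T$ into $Y(\Gamma)'$ and the assumption $\bfg\in Y(\Gamma)$. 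The essential difficulty is that $a$ is \emph{not} coercive: both the sign of the mass term $-k^2\dotprod{\kappa\bfu}{\bfu}{}$ and the infinite-dimensional kernel of $\curl$ (all gradients) rule out a direct application of the Lax--Milgram lemma.

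To invoke the Fredholm alternative I must write $a$ as a coercive form plus a compact perturbation, and this is where I expect the main obstacle to lie. Shifting by the mass term, $a(\bfu,\bfv)+\mu\dotprod{\bfu}{\bfv}{}$ is coercive on $H(\curl)$ for any $\mu>k^2\|\real{\kappa}\|_\infty$, since its real part dominates $\|\curl\bfu\|^2+(\mu-k^2\|\real{\kappa}\|_\infty)\|\bfu\|^2$. However, the compensating term $\dotprod{\bfu}{\bfv}{}$ does \emph{not} induce a compact operator on $H(\curl)$, because the embedding $H(\curl)\hookrightarrow L^2(\Omega)^3$ is not compact. I would repair this with a Helmholtz decomposition $H(\curl)=V\oplus\nabla H^1(\Omega)$ adapted to the weight $\kappa$: on the gradient part the form reduces to $-k^2\dotprod{\kappa\nabla p}{\nabla q}{}$, which is invertible by Lax--Milgram because $\real{\kappa}\ge\alpha_R>0$ makes $\dotprod{\kappa\nabla p}{\nabla q}{}$ coercive on $H^1(\Omega)$ modulo constants (Poincaré--Wirtinger); on the complementary space $V$ of ($\kappa$-)divergence-free fields the embedding $V\hookrightarrow L^2(\Omega)^3$ \emph{is} compact by Weber's compactness theorem, so the mass term genuinely becomes a compact perturbation and the coupling between the two parts is lower order. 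The delicate point is choosing the boundary conditions that define $V$ so that the decomposition is compatible with the natural condition $\curl\bfE\times\bfn=\bfg$.

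With the Fredholm alternative in force, existence and continuous dependence follow once I establish uniqueness, namely that $\bfE\in H(\curl)$ with $a(\bfE,\bfphi)=0$ for all $\bfphi$ forces $\bfE=0$. Taking $\bfphi=\bfE$ and extracting the imaginary part yields $k^2\int_\Omega\imag{\kappa}\,|\bfE|^2=0$; since by~\eqref{eq:hyp_kappa} there is at least one subdomain $\Omega_p$ on which $\imag{\kappa}\ge\alpha_I>0$, this forces $\bfE\equiv0$ on $\Omega_p$. As $\bfE$ solves the homogeneous system $\curl\curl\bfE-k^2\kappa\bfE=0$ and $\Omega$ is connected, a unique continuation argument then propagates $\bfE\equiv0$ from $\Omega_p$ to all of $\Omega$, the piecewise $H^3$ regularity of $\kappa$ providing the $C^1$ coefficient regularity needed to cross the interfaces $\partial\Omega_p$. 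Uniqueness together with the Fredholm alternative then gives a unique solution, and the bounded-inverse (open mapping) theorem yields the continuous dependence on $\bfF$ and $\bfg$, completing the proof.
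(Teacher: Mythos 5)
Your overall route --- bounded sesquilinear form, Helmholtz decomposition of $H(\curl)$ adapted to the weight $\kappa$, compactness of the ($\kappa$-)divergence-free part, Fredholm alternative, and uniqueness via the imaginary part plus unique continuation --- is exactly the approach the paper intends: its ``proof'' is a two-line sketch citing Monk and naming precisely ``a judicious Helmholtz decomposition of $H(\curl)$ and the Fredholm alternative'' as the main ingredients. Your proposal fills in that sketch along the intended lines, and you correctly identify the structural points (non-compactness of $H(\curl)\hookrightarrow L^2(\Omega)^3$ as the reason a naive shift fails, Weber-type compactness on the constrained subspace as the remedy, and the natural character of the Neumann condition in choosing the decomposition). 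One small clarification on the coupling: for complex $\kappa$ the off-diagonal term $\dotprod{\kappa\nabla p}{\bfv}{}$ with $\bfv \in V$ does \emph{not} vanish, since the defining orthogonality of $V$ only kills $\dotprod{\kappa\bfv}{\nabla\xi}{}$; the system is triangular rather than diagonal --- one solves for $p$ first by Lax--Milgram, then treats $k^2\dotprod{\kappa\nabla p}{\bfv}{}$ as known data in the Fredholm equation on $V$ --- which is consistent with, and makes precise, your remark that the coupling is ``lower order.''

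The one step whose justification as written would fail is unique continuation across the interfaces. You claim the piecewise $H^3$ regularity of $\kappa$ provides ``the $C^1$ coefficient regularity needed to cross the interfaces $\partial\Omega_p$,'' but piecewise regularity gives $\kappa \in C^{1,1/2}(\overline{\Omega}_p)$ only \emph{inside} each subdomain; under \eqref{eq:hyp_kappa} the coefficient may jump across $\partial\Omega_p$, so no unique continuation theorem requiring $C^1$ (or even Lipschitz) coefficients applies in any neighborhood of an interface. The standard repair is a transmission argument: if $\bfE \equiv 0$ on one side of an interface, the continuity of the tangential traces of $\bfE$ and of $\curl\bfE$ across it yields vanishing Cauchy data for the field on the other side, and unique continuation is then invoked purely within the adjacent subdomain, where the coefficient is smooth; since $\Omega$ is connected, iterating over the finitely many subdomains propagates $\bfE \equiv 0$ from the absorbing region (which exists because $\imag{\kappa} \not\equiv 0$) to all of $\Omega$. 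With that correction your uniqueness step --- and hence the whole argument --- is sound.
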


	\textit{Sketch of the proof:} The proof is adapted from \cite{Monk03}. The main ingredients are a judicious Helmholtz decomposition of $H(\curl)$ and
	the Fredholm alternative.

	\subsection{Regularity of the variational solution}

	In a setting where the subdomains $\Omega_p$ are only Lipschitz, the solution of \eqref{eq:maxwell_var} may have poor regularity. Indeed, singularities are likely to occur at the corners and edges of the subdomains, and the solution $\bfE$ does not belong, in general, to $H^1(\Omega_p)^3$ (see \cite{CoDaNi99}). In the present paper and with regard to the (biomedical) applications that we have in mind, we do not deal with these questions of singularities. Therefore, we assume from now on that $\Omega$ as well as all subdomains $\Omega_p$ are at least of class $\mcC^{1,1}$. This assumption allows to prove regularity results for the solution of \eqref{eq:maxwell_var} and the sensitivity equation that will be presented hereafter.

	According to the partition of $\Omega$ into $P$ subdomains $(\Omega_p)_p$, we introduce the spaces of piecewise smooth functions: for $s>0$, let
	\[\PH{s} = \set{v \in L^2(\Omega)}{\restriction{v}{\Omega_p} \in H^s(\Omega_p) \sforall p \in \zinterval{1}{P}}.\]
	We adopt the notation $\PH{s}^3$ to denote spaces of piecewise smooth vector fields. We further introduce the classical space
	\begin{align*}
		H(\div) &= \set*{\bfu \in L^2(\Omega)^3}{\div\bfu \in L^2(\Omega)} \\
		\shortintertext{as well as the trace space}
		H^{s}(\div_\Gamma) &= \set*{\bff \in H^{s}_t(\Gamma)}{\div_\Gamma\bff \in H^{s}(\Gamma)}
	\end{align*}
	where $\div_\Gamma$ denotes the surface divergence operator defined on the subspace $H^{s}_t(\Gamma)$ of $ H^s(\Gamma)^3$ of tangential fields $\bff$. A rigorous definition of $\div_\Gamma$ can be found in \cite{CK98,Monk03}. The following regularity result can be deduced from \cite{CoDaNi99}:

	\begin{theorem}
		\label{thm:reg_maxwell}
		Let $\Omega$ as well as all subdomains $\Omega_p$ be of class $\mcC^{1,1}$. Assume that the source term $\bfF$ belongs to $H(\div)$ and satisfies $\bfF \cdot \bfn \in H^{1/2}(\Gamma)$. Assume further that $\bfg \in H^{1/2}(\div_\Gamma)$. Let $\kappa$ be a piecewise constant function with respect to the partition of $\Omega$ that satisfies the assumptions of \autoref{thm:sol_maxwell}. Then, the solution of \eqref{eq:maxwell_var} belongs to $\PH{1}^3$ and satisfies
		\begin{subequations}
			\begin{empheq}[left = \empheqlbrace]{align}
				\label{eq:div1}
				-k^2\div(\kappa\bfE) &= \div\bfF \stext{in} \Omega, \\
				\label{eq:div2}
				k^2\kappa\bfE \cdot \bfn &= -\bfF \cdot \bfn + \div_\Gamma\bfg \stext{on} \Gamma.
			\end{empheq}
		\end{subequations}
	\end{theorem}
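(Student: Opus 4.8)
The plan is to proceed in two stages. First I would extract the two relations \eqref{eq:div1} and \eqref{eq:div2} directly from the variational formulation \eqref{eq:maxwell_var}, which amounts to controlling the divergence of $\kappa\bfE$ and its normal trace. Then I would feed this information, together with the curl control inherited from $\bfE \in H(\curl)$, into the transmission regularity theory of \cite{CoDaNi99} to obtain the piecewise $H^1$ bound. Note that both relations can be obtained at the variational level, so the (not yet justified) strong form of \eqref{eq:maxwell} is not needed.

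For the first stage, I would test \eqref{eq:maxwell_var} against gradient fields $\bfphi = \nabla\psi$. Since $\curl\nabla\psi = 0$, the curl term drops, and ${(\nabla\psi)}_T$ is the surface gradient $\nabla_\Gamma(\restriction{\psi}{\Gamma})$, so the variational identity reduces to
\[
-k^2\dotprod{\kappa\bfE}{\nabla\psi}{} = \dotprod{\bfF}{\nabla\psi}{} + \duality{\bfg}{\nabla_\Gamma\psi}{\Gamma}.
\]
Choosing $\psi \in H^1_0(\Omega)$ kills the surface term, and an integration by parts against $\bfF \in H(\div)$ identifies $\div(\kappa\bfE) = -\tfrac{1}{k^2}\div\bfF \in L^2(\Omega)$ in the sense of distributions; this is \eqref{eq:div1} and shows in particular that $\kappa\bfE \in H(\div)$. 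Taking now a general $\psi \in H^1(\Omega)$, the integration by parts is licit on both sides (both $\kappa\bfE$ and $\bfF$ lie in $H(\div)$), and the tangential surface term is treated with the surface divergence formula $\duality{\bfg}{\nabla_\Gamma\psi}{\Gamma} = -\duality{\div_\Gamma\bfg}{\psi}{\Gamma}$, valid since $\bfg$ is tangential and $\bfg \in H^{1/2}(\div_\Gamma)$. The volume contributions cancel by \eqref{eq:div1}, and what remains, tested against all $\restriction{\psi}{\Gamma} \in H^{1/2}(\Gamma)$, is exactly $k^2\kappa\bfE\cdot\bfn = -\bfF\cdot\bfn + \div_\Gamma\bfg$, i.e.\ \eqref{eq:div2}.

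For the second stage, I would argue subdomain by subdomain. On each $\Omega_p$ the coefficient $\kappa$ equals a constant $\kappa_p$, so \eqref{eq:div1} gives $\div\bfE = -\tfrac{1}{k^2\kappa_p}\div\bfF \in L^2(\Omega_p)$, while $\curl\bfE \in L^2(\Omega_p)$ is inherited from $\bfE \in H(\curl)$. On the outer boundary, \eqref{eq:div2} combined with the hypotheses $\bfF\cdot\bfn \in H^{1/2}(\Gamma)$ and $\bfg \in H^{1/2}(\div_\Gamma)$ yields $\kappa\bfE\cdot\bfn \in H^{1/2}(\Gamma)$, hence $\bfE\cdot\bfn \in H^{1/2}$ on each portion $\Gamma \cap \overline{\Omega_p}$, where $\kappa$ is constant. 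Across the internal interfaces, the tangential trace of $\bfE$ is continuous (because $\bfE \in H(\curl)$) and the normal trace of $\kappa\bfE$ is continuous (because $\kappa\bfE \in H(\div)$); these are precisely the transmission conditions of the electromagnetic interface problem. Under the standing assumption that $\Omega$ and all $\Omega_p$ are of class $\mcC^{1,1}$, the regularity results for Maxwell transmission problems in \cite{CoDaNi99} then apply and deliver $\restriction{\bfE}{\Omega_p} \in H^1(\Omega_p)^3$ for every $p$, that is $\bfE \in \PH{1}^3$.

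The delicate point is the second stage. The memberships $\curl\bfE, \div\bfE \in L^2$ do not by themselves force $H^1$ regularity, since electromagnetic fields generically develop edge and corner singularities at geometric and material interfaces and then fail to be $H^1$ there. The role of the $\mcC^{1,1}$ hypothesis is precisely to exclude such singularities, and the substance of the argument is to check that the hypotheses of the transmission regularity theorem of \cite{CoDaNi99} are met in our setting --- in particular that the piecewise constant, sign-definite coefficient $\kappa$ together with the $H^{1/2}$ regularity of the traces places us in the regime where the interface problem carries no singular part, so that the lifting to $\PH{1}^3$ holds uniformly across the subdomains.
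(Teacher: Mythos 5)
Your first stage is correct and matches the paper in substance: the paper obtains \eqref{eq:div1} by reading the equation \eqref{eq:maxwell} in the distributional sense (the divergence of $\curl\curl\bfE$ vanishes) and \eqref{eq:div2} via Green's formula applied to the variational identity; your derivation by testing \eqref{eq:maxwell_var} with $\bfphi = \nabla\psi$, first for $\psi \in H^1_0(\Omega)$ and then for general $\psi \in H^1(\Omega)$ together with $\duality{\bfg}{\nabla_\Gamma\psi}{\Gamma} = -\duality{\div_\Gamma\bfg}{\psi}{\Gamma}$, is the same computation organized differently, and your remark that the strong form is dispensable is cosmetic, since the variational formulation yields it in $\mathcal{D}'(\Omega)$ anyway.

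The gap is in your second stage. There is no theorem in \cite{CoDaNi99} that applies verbatim to a field with nonzero piecewise divergence and inhomogeneous normal trace, so the sentence ``the regularity results for Maxwell transmission problems then apply and deliver $\restriction{\bfE}{\Omega_p} \in H^1(\Omega_p)^3$'' invokes a black box that does not exist in the form you need. The result the paper actually uses, \cite[Theorem~3.5]{CoDaNi99}, concerns fields $\bfE_0 \in H(\curl)$ with $\div(\kappa\bfE_0) = 0$ in $\Omega$ and $\kappa\bfE_0 \cdot \bfn = 0$ on $\Gamma$, and even then it delivers not $\PH{1}^3$ membership directly but a decomposition $\bfE_0 = \bfE_{0,R} + \nabla p_0$ with $\bfE_{0,R} \in \PH{1}^3$ and $p_0$ a scalar Neumann potential. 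Hence the step you defer to ``checking the hypotheses'' is the actual proof, and it requires two scalar elliptic liftings that your sketch omits. The paper first solves the auxiliary Neumann transmission problem $-\div(\kappa\nabla p) = \div\bfF$ in $\Omega$, $\kappa\partial_n p = -\bfF\cdot\bfn + \div_\Gamma\bfg$ on $\Gamma$, whose data are exactly those produced by \eqref{eq:div1}--\eqref{eq:div2}; since the subdomains are $\mcC^{1,1}$ and $\kappa$ is piecewise constant, scalar transmission regularity gives $p \in \PH{2}$. Subtracting, $\bfE_0 = \bfE - \frac{1}{k^2}\nabla p$ satisfies precisely the homogeneous hypotheses of \cite[Theorem~3.5]{CoDaNi99}; a second application of scalar regularity to the Neumann problem solved by $p_0$ (with $L^2(\Omega)$ right-hand side and homogeneous boundary condition) yields $p_0 \in \PH{2}$, whence $\bfE_0 \in \PH{1}^3$ and finally $\bfE \in \PH{1}^3$. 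Your overall architecture --- divergence relations first, then \cite{CoDaNi99} --- is the right one, but without the gradient lifting and the two $\PH{2}$ arguments for the auxiliary potentials, the conclusion of your second stage does not follow from any statement you can quote.
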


	\begin{proof}
		Let $\bfE\in H(\curl)$ be the solution of \eqref{eq:maxwell_var}. Since $\bfE$ satisfies \eqref{eq:maxwell} in the distributional sense, we get \eqref{eq:div1} where the right hand side belongs to $L^2(\Omega)$. We then deduce \eqref{eq:div2} from Green's formula and \eqref{eq:maxwell_var}. Now, let $p \in \faktor{H^1(\Omega)}{\R}$ be the unique solution of the Neumann problem
		\[
			\left\{
			\begin{array}{rcl@{\hspace{4\tabcolsep}}l}
				-\div(\kappa\nabla p) &=& \div\bfF, & \stext[r]{in} \Omega, \\
				\kappa\partial_n p &=& -\bfF \cdot \bfn + \div_\Gamma\bfg, & \stext[r]{on} \Gamma.
			\end{array}
			\right.
		\]
		According to the assumptions on the data $\bfF$ and $\bfg$ and due to the regularity of $\Omega$ and its subdomains, the scalar potential $p$ belongs to $\PH{2}$. Then, let $\bfE_0 = \bfE - \dfrac{1}{k^2} \nabla p$. $\bfE_0$ obviously belongs to $H(\curl)$ and is divergence free in the sense that $\div(\kappa\bfE_0) = 0$. It also satifies the homogeneous boundary condition $\kappa\bfE_0 \cdot \bfn = 0$ on $\Gamma$. Therefore, we can apply \cite[Theorem~3.5]{CoDaNi99}, and deduce that the field $\bfE_0$ admits a decomposition $\bfE_0 = \bfE_{0,R} + \nabla p_0$ where $\bfE_{0,R} \in \PH{1}^3$ and $p_0 \in \faktor{H^1(\Omega)}{\R}$ is the unique solution of a Neumann problem with right hand side in $L^2(\Omega)$ and homogeneous boundary condition. Again, $p_0$ belongs to $\PH{2}$ in the present setting of regular subdomains. This shows that $\bfE_0$ belongs to $\PH{1}^3$ and implies $\bfE \in \PH{1}^3$ due to the regularity of the scalar potential $p$.
	\end{proof}

	In the case of regular data and a constant parameter $\kappa$, a stronger regularity result can be obtained for $\bfE$:
	\begin{theorem}
		\label{thm:reg_maxwell2}
		Let $\Omega$ be of class $\mcC^{2,1}$. Let $\kappa$ be a constant such that $\real{\kappa} \geq 0$ and $\imag{\kappa} \geq 0$. Assume that $\bfF \in H^1(\Omega)^3$ with $\div\bfF \in H^1(\Omega)$ and $\bfF \cdot \bfn \in H^{3/2}(\Gamma)$. Assume further that $\bfg \in H^{3/2}(\div_\Gamma)$. Then, $\bfE \in H^2(\Omega)^3$.
	\end{theorem}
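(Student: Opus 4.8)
The plan is to use the fact that, for a \emph{constant} parameter $\kappa$, the curl--curl equation together with the divergence relations \eqref{eq:div1}--\eqref{eq:div2} furnish decoupled information on $\curl\bfE$, $\div\bfE$ and the traces of $\bfE$, to which a classical div--curl regularity estimate applies; one then bootstraps this estimate from $H^1$ to $H^2$. Since $\kappa$ is constant, $\div(\kappa\bfE) = \kappa\div\bfE$, so \eqref{eq:div1}--\eqref{eq:div2} become
\[
	\div\bfE = -\frac{1}{k^2\kappa}\div\bfF \stext{in} \Omega, \qquad \bfE\cdot\bfn = \frac{1}{k^2\kappa}\left(-\bfF\cdot\bfn + \div_\Gamma\bfg\right) \stext{on} \Gamma,
\]
while the first equation in \eqref{eq:maxwell} reads $\curl\curl\bfE = \bfF + k^2\kappa\bfE$. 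The tool I would invoke is the standard regularity estimate for vector fields (see \cite{Monk03,CoDaNi99}): on a domain of class $\mcC^{r+1,1}$, a field $\bfu \in L^2(\Omega)^3$ with $\curl\bfu \in H^r(\Omega)^3$, $\div\bfu \in H^r(\Omega)$ and prescribed tangential trace $\bfu\times\bfn \in H^{r+1/2}_t(\Gamma)$ (respectively prescribed normal trace $\bfu\cdot\bfn \in H^{r+1/2}(\Gamma)$) belongs to $H^{r+1}(\Omega)^3$.

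First I would note that, for a constant $\kappa$, \autoref{thm:reg_maxwell} already gives $\bfE \in \PH{1}^3$; since there are no coefficient jumps and $\bfE$ belongs in addition to $H(\div)$ (because $\div\bfE \in L^2(\Omega)$ by the relation above), the piecewise-$H^1$ field $\bfE$ has continuous tangential and normal traces across the interfaces and is therefore globally in $H^1(\Omega)^3$. Next I would study the field $\curl\bfE \in L^2(\Omega)^3$. It satisfies $\curl\curl\bfE = \bfF + k^2\kappa\bfE \in L^2(\Omega)^3$, $\div\curl\bfE = 0$, and its tangential trace is $\curl\bfE \times \bfn = \bfg \in H^{1/2}_t(\Gamma)$, the last inclusion following from $\bfg \in H^{3/2}(\div_\Gamma) \subset H^{1/2}_t(\Gamma)$. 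Applying the regularity estimate with $r=0$, in its tangential-trace version and on $\Omega$ (of class $\mcC^{2,1}$, hence a fortiori $\mcC^{1,1}$), I obtain $\curl\bfE \in H^1(\Omega)^3$.

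I would then bootstrap to $\bfE$ itself. With $\curl\bfE \in H^1(\Omega)^3$ now in hand, I have $\div\bfE = -\frac{1}{k^2\kappa}\div\bfF \in H^1(\Omega)$ since $\div\bfF \in H^1(\Omega)$, while the normal trace $\bfE\cdot\bfn$ lies in $H^{3/2}(\Gamma)$ because $\bfF\cdot\bfn \in H^{3/2}(\Gamma)$ and $\div_\Gamma\bfg \in H^{3/2}(\Gamma)$ (the latter from $\bfg \in H^{3/2}(\div_\Gamma)$). Applying the regularity estimate once more, now with $r=1$, in its normal-trace version and using the full $\mcC^{2,1}$ regularity of $\Omega$, yields $\bfE \in H^2(\Omega)^3$, which is the claim.

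The main obstacle I anticipate is not conceptual but lies in correctly matching the div--curl estimate to the situation: using the tangential-trace version for $\curl\bfE$ and the normal-trace version for $\bfE$, at trace orders $r+1/2 = 1/2$ and $r+1/2 = 3/2$ respectively, and checking that the $\mcC^{1,1}$ and $\mcC^{2,1}$ smoothness of $\Omega$ indeed support the orders $r=0$ and $r=1$. One must also track the trace memberships with care, in particular that $\bfg \in H^{3/2}(\div_\Gamma)$ delivers both $\bfg \in H^{3/2}_t(\Gamma)$ and $\div_\Gamma\bfg \in H^{3/2}(\Gamma)$, so that $-\bfF\cdot\bfn + \div_\Gamma\bfg \in H^{3/2}(\Gamma)$; these verifications are routine once the appropriate trace and regularity results are cited.
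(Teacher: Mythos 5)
Your proof is correct and follows essentially the same route as the paper: both apply the Amrouche--Bernardi--Dauge--Girault div--curl embedding twice, first to $\bfw = \curl\bfE$ (with the tangential-trace version, using $\curl\bfw = \bfF + k^2\kappa\bfE$, $\div\bfw = 0$, $\bfw\times\bfn = \bfg$) and then to $\bfE$ itself (with the normal-trace version, using the divergence relations inherited from \autoref{thm:reg_maxwell}). The only, harmless, difference is that you stop at $\curl\bfE \in H^1(\Omega)^3$, which indeed suffices for the final step, whereas the paper uses the full strength of $\bfg \in H^{3/2}(\Gamma)^3$ to conclude $\curl\bfE \in H^2(\Omega)^3$ first.
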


	\begin{proof}
		The proof is based on a result from \cite{AmBeDaGi98}: if $\Omega$ is of class $\mcC^{m,1}$ for $m \in \N$, the spaces
		\[\set*{\bfv \in L^2(\Omega)^3}{\curl\bfv \in H^{m-1}(\Omega)^3; \div\bfv \in H^{m-1}(\Omega); \bfv \times \bfn \in H^{m-1/2}(\Gamma)^3}\]
		and
		\[\set*{\bfv \in L^2(\Omega)^3}{\curl\bfv \in H^{m-1}(\Omega)^3; \div\bfv \in H^{m-1}(\Omega); \bfv \cdot \bfn \in H^{m-1/2}(\Gamma)}\]
		are both continuously imbedded in $H^m(\Omega)^3$.

		Now, let $\bfw = \curl\bfE$. According to \eqref{eq:maxwell} and the regularity result of \autoref{thm:reg_maxwell}, we have $\curl\bfw = k^2\kappa\bfE + F \in H^1(\Omega)^3$ as well as $\div\bfw = 0\ \mbox{in}\ \Omega$ and $\bfw \times \bfn = \curl\bfE \times \bfn = \bfg \in H^{3/2}(\Gamma)$. Thus, $\curl\bfE = \bfw \in H^2(\Omega)^3$. Furthermore, the regularity assumptions on $\bfF$ and $\bfg$ ensure that $\div\bfE \in H^1(\Omega)$ and $\bfE \cdot \bfn \in H^{3/2}(\Gamma)$. Therefore, $\bfE \in H^2(\Omega)^3$.
	\end{proof}

	\section{Sensitivity analysis with respect to a perturbation of electric parameters}
	\label{sec:sensitivity}

	Sensitivity analysis determines how the solution of a problem varies when a slight perturbation is induced in some of its physical parameters. Here, we are interested in the sensitivity analysis of the electric field with respect to the electrical permittivity and/or conductivity. Mathematically, it may be described rigorously by the Gâteaux derivative (see for example \cite{BorggaardNunes11}).

	\begin{definition}
		Let $F\colon X \to Y$ be an application between two Banach spaces $X$ and $Y$. Let $U \subset X$ be an open set. The Gâteaux derivative of $F$ at $\tau \in U$ in the direction $\varrho \in X$ is defined as
		\[D_{\varrho} F(\tau) = \lim_{h \to 0} \frac{F(\tau + h\varrho) - F(\tau)}{h}\]
		if the limit exists. If it exists for any direction $\varrho \in X$ and if the application $\varrho \mapsto D_{\varrho} F(\tau)$ is linear and continuous from $X$ to $Y$, then we say that $F$ is Gâteaux differentiable at $\tau$.
	\end{definition}

	\subsection{Sensitivity equation}

	We define the space of parameters
	\[\mcP = \set*{(\eps,\sigma) \in L^\infty(\Omega)^2}{\kappa \in \PH{3}}\]
	which is a Banach space, equipped with the norm
	\[\norm{(\eps,\sigma)}{\mcP} = \max(\norm{\eps}{L^\infty},\norm{\sigma}{L^\infty}), \quad \forall (\eps,\sigma) \in \mcP.\]
	We define the open set of admissible parameters
	\[\mcP_\text{adm} = \set{(\eps,\sigma) \in \mcP}{\eps_\text{min} < \eps < \eps_\text{max} \stext{and} \sigma_\text{min} < \sigma < \sigma_\text{max} \stext{in} \Omega}\]
	where $0 < \eps_\text{min} < \eps_\text{max}$ and $0 < \sigma_\text{min} < \sigma_\text{max}$ are real constants. From \autoref{thm:sol_maxwell}, we deduce that, for any $\tau = (\eps,\sigma) \in \mcP_\text{adm}$, the problem~\eqref{eq:maxwell_var} admits a unique solution, denoted by $\bfE(\cdot,\tau)$.

	\begin{theorem}
		\label{thm:sensitivity}
		Let $\tau \in \mcP_\text{adm}$. Let $h_0 > 0$ be such that $\tau + h\varrho \in \mcP_\text{adm}$ for any $h \in \interval{-h_0}{h_0}$ and $\varrho = (\varrho_{\eps},\varrho_{\sigma}) \in \mcP$. Then $\bfE(\cdot,\tau)$ is Gâteaux differentiable at $\tau$ in the direction $\varrho$. Moreover, its derivative $D_{\varrho}\bfE(\cdot,\tau)$ is the unique solution of the following variational problem
		\begin{equation}
			\label{eq:sensitivity}
			\left\{
			\begin{array}{l}
				\stext[r]{Find} \bfE^1 \in H(\curl) \stext[l]{such that} \\
				\dotprod{\curl\bfE^1}{\curl\bfphi}{} - k^2\dotprod{\kappa\bfE^1}{\bfphi}{} = \dfrac{k^2}{\eps_0}\dotprod*{\left(\varrho_{\eps} + i\dfrac{\varrho_{\sigma}}{\omega}\right)\bfE}{\bfphi}{}, \forall \bfphi \in H(\curl).
			\end{array}
			\right.
			\tag{$\mcS$}
		\end{equation}
	\end{theorem}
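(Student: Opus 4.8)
The plan is a difference-quotient argument that reduces every step to the single well-posedness statement of \autoref{thm:sol_maxwell}, applied to the \emph{fixed} operator associated with $\kappa$ but with varying $L^2$ right-hand sides. Throughout, write $\bfE = \bfE(\cdot,\tau)$ and $\bfE_h = \bfE(\cdot,\tau+h\varrho)$ for $0 < |h| \leq h_0$ (each $\bfE_h$ being well defined since $\tau + h\varrho \in \mcP_\text{adm}$), and set $\delta\kappa \coloneqq \frac{1}{\eps_0}\bigl(\varrho_\eps + i\frac{\varrho_\sigma}{\omega}\bigr) \in L^\infty(\Omega)$, so that the index attached to $\tau+h\varrho$ is $\kappa + h\,\delta\kappa$. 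Denote by $a_\kappa(\bfu,\bfphi) = \dotprod{\curl\bfu}{\curl\bfphi}{} - k^2\dotprod{\kappa\bfu}{\bfphi}{}$ the form from \eqref{eq:maxwell_var}, and note the elementary identity $a_{\kappa+h\delta\kappa}(\bfu,\bfphi) = a_\kappa(\bfu,\bfphi) - k^2 h\dotprod{\delta\kappa\,\bfu}{\bfphi}{}$.

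First I would settle the well-posedness of \eqref{eq:sensitivity}. Its left-hand side is $a_\kappa$, and its right-hand side is the $L^2$-pairing against $\bfF^1 \coloneqq k^2\,\delta\kappa\,\bfE = \frac{k^2}{\eps_0}\bigl(\varrho_\eps + i\frac{\varrho_\sigma}{\omega}\bigr)\bfE$, with boundary datum $\bfg = 0$. Since $\varrho \in \mcP$ has $L^\infty$ components and $\bfE \in L^2(\Omega)^3$, the source $\bfF^1$ lies in $L^2(\Omega)^3$; hence \autoref{thm:sol_maxwell} provides a unique solution $\bfE^1 \in H(\curl)$ together with the estimate $\norm{\bfE^1}{H(\curl)} \leq C\norm{\bfF^1}{L^2}$, the constant $C$ depending only on the fixed $\kappa$.

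Next I would prove $\bfE_h \to \bfE$. Subtracting the variational identity for $\bfE$ from that for $\bfE_h$ and using the identity above, $\bfE_h - \bfE$ solves \eqref{eq:maxwell_var} with parameter $\kappa$ and source $k^2 h\,\delta\kappa\,\bfE_h$, so continuous dependence gives $\norm{\bfE_h - \bfE}{H(\curl)} \leq C|h|\,\norm{\bfE_h}{L^2}$ with the same fixed $C$. A short bootstrap — restricting to $|h|$ so small that $C|h| < \frac{1}{2}$ and absorbing the right-hand term — turns this self-referential bound into the uniform estimate $\norm{\bfE_h}{H(\curl)} \leq 2\norm{\bfE}{H(\curl)}$, whence $\norm{\bfE_h - \bfE}{H(\curl)} \leq 2C|h|\,\norm{\bfE}{H(\curl)} \to 0$. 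Forming the difference quotient $\bfw_h = (\bfE_h - \bfE)/h$, which satisfies $a_\kappa(\bfw_h,\bfphi) = k^2\dotprod{\delta\kappa\,\bfE_h}{\bfphi}{}$, and subtracting \eqref{eq:sensitivity}, I obtain that $\bfw_h - \bfE^1$ solves \eqref{eq:maxwell_var} with parameter $\kappa$ and source $k^2\,\delta\kappa(\bfE_h - \bfE)$; continuous dependence yields $\norm{\bfw_h - \bfE^1}{H(\curl)} \leq C'\norm{\bfE_h - \bfE}{L^2} \to 0$. This is exactly the existence of the Gâteaux derivative together with the identity $D_\varrho\bfE = \bfE^1$. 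Linearity and continuity of $\varrho \mapsto D_\varrho\bfE$ then follow from the well-posedness of \eqref{eq:sensitivity}: the map $\varrho \mapsto \bfF^1$ is linear, so the linear solution operator of $a_\kappa$ makes $\varrho \mapsto \bfE^1$ linear, while $\norm{D_\varrho\bfE}{H(\curl)} \leq C\norm{\bfF^1}{L^2} \leq C''\norm{\varrho}{\mcP}\norm{\bfE}{L^2}$ gives continuity.

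The main obstacle is the uniform-in-$h$ control of $\bfE_h$. Because \eqref{eq:maxwell_var} is not coercive — its solvability in \autoref{thm:sol_maxwell} rests on the Fredholm alternative rather than on \mbox{Lax--Milgram} — no direct energy estimate is available, and $\norm{\bfE_h}{L^2}$ on the right-hand side cannot be bounded by inspection. The device that resolves this is to always subtract so as to reduce to the \emph{fixed} operator $a_\kappa$, which keeps the constant $C$ independent of $h$, and then to run the bootstrap on the resulting self-referential inequality. Once this uniform bound is secured, the remaining convergences are immediate reuses of the same continuous-dependence estimate with $L^2$-data tending to zero.
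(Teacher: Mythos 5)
Your proof is correct and its skeleton coincides with the paper's: form the difference of the variational identities for $\bfE_h$ and $\bfE$, divide by $h$, subtract \eqref{eq:sensitivity}, and observe that the difference $\bfw_h - \bfE^1$ (the paper's $\tilde{\bfE}_h$) solves the \emph{unperturbed} problem \eqref{eq:maxwell_var} with an $L^2$ source proportional to $\bfE_h - \bfE$, so that \autoref{thm:sol_maxwell} closes the argument; linearity and continuity are then read off from the well-posedness of \eqref{eq:sensitivity}, exactly as in the paper (which phrases linearity via uniqueness rather than via the solution operator — the same fact). The one step where you genuinely depart from the paper is the uniform-in-$h$ bound on $\norm{\bfE_h}{H(\curl)}$. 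The paper gets it in one line, $\norm{\bfE_h}{H(\curl)} \lesssim \norm{\bfF}{0} + \norm{\bfg}{H^{-1/2}}$, by invoking continuous dependence for the \emph{perturbed} problem with parameter $\kappa + h\,\delta\kappa$; this tacitly assumes that the Fredholm stability constant is uniform over the family of parameters $\lbrace \kappa + h\,\delta\kappa : |h| \leq h_0 \rbrace$, a point the $\lesssim$ notation leaves unexamined. You instead rewrite $\bfE_h - \bfE$ as a solution of the problem with the \emph{fixed} parameter $\kappa$ and source $k^2 h\,\delta\kappa\,\bfE_h$, and absorb the self-referential term for $C|h| < \tfrac{1}{2}$ to obtain $\norm{\bfE_h}{H(\curl)} \leq 2\norm{\bfE}{H(\curl)}$. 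This bootstrap is self-contained, uses only the single stability constant attached to $\kappa$, and thereby repairs the uniformity question that the paper's shortcut glosses over; the price is the (harmless, since $h \to 0$) restriction to $|h|$ small. Everything else — the convergence $\norm{\bfw_h - \bfE^1}{H(\curl)} \lesssim \norm{\bfE_h - \bfE}{0} \to 0$ and the continuity estimate $\norm{D_\varrho\bfE(\cdot,\tau)}{H(\curl)} \lesssim \norm{\varrho}{\mcP}$ — matches the paper's proof line for line.
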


	To simplify the writing of the proof of this result, we introduce the following notation, for any couple of positive (or null) reals $a$ and $b$:
	\[a \lesssim b \iff (\Exists{ C > 0} a \leq C b)\]
	where $C$ is a constant independent of $a$ and $b$.

	\begin{proof}
		Let $h \in \interval{-h_0}{h_0} \setminus \collection{0}$ and $\varrho = (\varrho_{\eps},\varrho_{\sigma}) \in \mcP$. Let $\bfE_h = \bfE(\cdot,\tau + h\varrho)$.

		The field $\bfE$ is the unique solution of
		\begin{equation}
			\label{eq:sensitivity_sol_maxwell}
			\left\{
			\begin{aligned}
			&	\stext[r]{Find} \bfE \in H(\curl) \stext[l]{such that} \\
			&	\dotprod{\curl\bfE}{\curl\bfphi}{} - \frac{k^2}{\eps_0}\dotprod*{\left(\eps + i\frac{\sigma}{\omega}\right)\bfE}{\bfphi}{} = \dotprod{\bfF}{\bfphi}{} + \duality{\bfg}{\bfphi_T}{\Gamma}, \quad \forall \bfphi \in H(\curl),
			\end{aligned}
			\right.
		\end{equation}
		whereas the field $\bfE_h$ is the unique solution of
		\begin{equation}
			\label{eq:sensitivity_sol_maxwell_h}
			\left\{
			\begin{aligned}
			&	\stext[r]{Find} \bfE_h \in H(\curl) \stext[l]{such that} \\
			&	\begin{multlined}[c][0.8\textwidth]
					\dotprod{\curl\bfE_h}{\curl\bfphi}{} - \frac{k^2}{\eps_0}\dotprod*{\left((\eps + h\varrho_{\eps}) + i\frac{\sigma + h\varrho_{\sigma}}{\omega}\right)\bfE_h}{\bfphi}{} \\
					= \dotprod{\bfF}{\bfphi}{} + \duality{\bfg}{\bfphi_T}{\Gamma}, \quad \forall \bfphi \in H(\curl).
				\end{multlined}
			\end{aligned}
			\right.
		\end{equation}
		Let $\bfphi \in H(\curl)$. We compute the difference between \eqref{eq:sensitivity_sol_maxwell_h} and \eqref{eq:sensitivity_sol_maxwell} and we divide by $h$ to find
		\begin{equation}
			\label{eq:sensitivity_difference}
			\dotprod{\curl\bfE^1_h}{\curl\bfphi}{} - k^2\dotprod{\kappa\bfE^1_h}{\bfphi}{} = \frac{k^2}{\eps_0}\dotprod*{\left(\varrho_{\eps} + i\frac{\varrho_{\sigma}}{\omega}\right)\bfE_h}{\bfphi}{},
		\end{equation}
		where $\bfE^1_h = \frac{\bfE_h - \bfE}{h}$.

		We now compute the difference between \eqref{eq:sensitivity_difference} and \eqref{eq:sensitivity} to obtain
		\begin{equation}
			\label{eq:sensitivity_limit_0}
			\dotprod{\curl(\bfE^1_h - \bfE^1)}{\curl\bfphi}{} - k^2\dotprod{\kappa(\bfE^1_h - \bfE^1)}{\bfphi}{} = \frac{k^2}{\eps_0}\dotprod*{\left(\varrho_{\eps} + i\frac{\varrho_{\sigma}}{\omega}\right)(\bfE_h - \bfE)}{\bfphi}{}.
		\end{equation}
		We note $\tilde{\bfE}_h = \bfE^1_h - \bfE^1$ and $\tilde{\bfF}_h = \dfrac{k^2}{\eps_0}\left(\varrho_{\eps} + i\dfrac{\varrho_{\sigma}}{\omega}\right)(\bfE_h - \bfE)$. We get:
		\begin{equation}
			\label{eq:sensitivity_limit}
			\dotprod{\curl\tilde{\bfE}_h}{\curl\bfphi}{} - k^2\dotprod{\kappa\tilde{\bfE}_h}{\bfphi}{} = \dotprod{\tilde{\bfF}_h}{\bfphi}{}.
		\end{equation}

		As $\varrho_{\eps}$ and $\varrho_{\sigma}$ are in $L^\infty(\Omega)$, we have $\tilde{\bfF}_h \in L^2(\Omega)^3$. Then \eqref{eq:sensitivity_limit} can be seen as the variational formulation of Maxwell's equations with a homogeneous Neumann boundary condition and the source term $\tilde{\bfF}_h$. From \autoref{thm:sol_maxwell}, we deduce that $\tilde{\bfE}_h \in H(\curl)$ is the unique field satisfying \eqref{eq:sensitivity_limit} for all $\bfphi \in H(\curl)$. Moreover, we know that
		\[\norm{\tilde{\bfE}_h}{H(\curl)} \lesssim \norm{\tilde{\bfF}_h}{0} \lesssim \norm{\bfE_h - \bfE}{0} \lesssim \norm{\bfE_h - \bfE}{H(\curl)}.\]
		We now use the definition of $\bfE^1_h$ to get
		\[|h|^{-1}\norm{\bfE_h - \bfE}{H(\curl)} = \norm{\bfE^1_h}{H(\curl)}.\]
		Furthermore, $\bfE^1_h$ satisfies \eqref{eq:sensitivity_difference} for all $\bfphi \in H(\curl)$ and we have
		\[\norm{\bfE^1_h}{H(\curl)} \lesssim \norm{\bfE_h}{0} \lesssim \norm{\bfE_h}{H(\curl)} \lesssim \norm{\bfF}{0} + \norm{\bfg}{H^{-1/2}}\]
		since $\bfE_h$ is the unique solution of \eqref{eq:sensitivity_sol_maxwell_h}. Combining these inequalities, we get
		\[\norm{\tilde{\bfE}_h}{H(\curl)} \lesssim |h|.\]
		Thus $\bfE^1_h$ converges to $\bfE^1$ in $H(\curl)$.

		\medskip

		In order to prove the linearity of the application $\varrho \mapsto D_\varrho\bfE(\cdot,\tau)$, let $\varrho = \lambda \varrho_1 + \varrho_2$ with $\lambda \in \C$ and $\varrho_j = (\varrho_{j,1},\varrho_{j,2}) \in \mcP, \forall j \in \collection{1,2}$. For $j \in \collection{1,2}$, we set $\bfE^1_j \coloneqq D_{\varrho_j}\bfE(\cdot,\tau)$. Thus $\bfE^1_j$ solves
		\[\dotprod{\curl\bfE^1_j}{\curl\bfphi}{} - k^2\dotprod{\kappa\bfE^1_j}{\bfphi}{} = \frac{k^2}{\eps_0}\dotprod*{\left(\varrho_{j,1} + i\frac{\varrho_{j,2}}{\omega}\right)\bfE}{\bfphi}{}, \; \forall \bfphi \in H(\curl).\]
		Let $\bfE^1 = \lambda\bfE^1_1 + \bfE^1_2$. By linearity, we have
		\[\dotprod{\curl\bfE^1}{\curl\bfphi}{} - k^2\dotprod{\kappa\bfE^1}{\bfphi}{} = \frac{k^2}{\eps_0}\dotprod*{\left(\varrho_{\eps} + i\frac{\varrho_{\sigma}}{\omega}\right)\bfE}{\bfphi}{}, \; \forall \bfphi \in H(\curl),\]
		where $\varrho = (\varrho_\eps, \varrho_\sigma) = (\lambda \varrho_{1,1} + \varrho_{2,1},\lambda \varrho_{1,2} + \varrho_{2,2})$. Then $\bfE^1$ is solution of the problem satisfied by $D_{\varrho}\bfE(\cdot,\tau)$. From the uniqueness of the solution, we deduce that
		\[D_{\lambda \varrho_1 + \varrho_2}\bfE(\cdot,\tau) = \lambda D_{\varrho_1}\bfE(\cdot,\tau) + D_{\varrho_2}\bfE(\cdot,\tau).\]

		We obtain that $D_{\varrho} \bfE(\cdot,\tau)$ is solution of \eqref{eq:sensitivity}. Moreover we have
		\[\norm{D_\varrho\bfE(\cdot,\tau)}{H(\curl)} \lesssim \norm*{\left(\varrho_{\eps} + i \dfrac{\varrho_{\sigma}}{\omega} \right)\bfE}{0} \lesssim \norm{\varrho}{\mcP}.\]
		Thus, the application $\varrho \mapsto D_\varrho\bfE(\cdot,\tau)$ is linear and continuous from $\mcP$ to $H(\curl)$.
	\end{proof}

	\subsection{Regularity of the solution to the sensitivity equation}

	The derivative $\bfE^1 = D_\varrho\bfE(\cdot,\tau)$ of $\bfE$ with respect to the parameter $\tau = (\varepsilon,\sigma)$ in the direction $\varrho = (\varrho_{\eps},\varrho_{\sigma})$ is solution of the following boundary value problem
	\begin{equation}
		\label{eq:sens-edp}
		\left\{
		\begin{array}{rcl@{\hspace{4\tabcolsep}}l}
			\curl\curl\bfE^1 - k^2\kappa\bfE^1 &=& k^2\chi\bfE, & \stext[r]{in} \Omega, \\
			\curl\bfE^1\times\bfn &=& 0, & \stext[r]{on} \Gamma
		\end{array}
		\right.
	\end{equation}
	where $\chi = \frac{1}{\varepsilon_0}\left(\varrho_{\eps} + i\frac{\varrho_{\sigma}}{\omega}\right)$.

	\begin{theorem}
		\label{thm:sens-reg}
		Let $\chi \in W^{1,\infty}(\Omega)$. Under the assumptions of \autoref{thm:reg_maxwell}, the solution of \eqref{eq:sensitivity} belongs to $\PH{1}$ and satisfies
		\begin{subequations}
			\begin{empheq}[left = \empheqlbrace]{align}
				\label{eq:sens-div1}
				-\div(\kappa\bfE^1) &= \div(\chi\bfE) \stext{in} \Omega, \\
				\label{eq:sens-div2}
				\kappa\bfE^1 \cdot \bfn &= -\chi\bfE \cdot \bfn \stext{on} \Gamma.
			\end{empheq}
		\end{subequations}
	\end{theorem}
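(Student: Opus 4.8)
The plan is to read \eqref{eq:sens-edp} as the Maxwell system \eqref{eq:maxwell} with source $k^2\chi\bfE$ and homogeneous boundary data, and then to reproduce the argument of \autoref{thm:reg_maxwell}. I would first obtain the two divergence relations straight from the variational formulation \eqref{eq:sensitivity}. Choosing $\bfphi = \nabla\xi$ with $\xi \in H^1(\Omega)$ annihilates the curl term and leaves $\dotprod{(\kappa\bfE^1 + \chi\bfE)}{\nabla\xi}{} = 0$ for all such $\xi$. Testing first against $\xi \in \mcC_c^\infty(\Omega)$ gives $\div(\kappa\bfE^1 + \chi\bfE) = 0$ in the distributional sense, i.e.\ \eqref{eq:sens-div1}; the field $\kappa\bfE^1 + \chi\bfE$ therefore lies in $H(\div)$, and integrating by parts against a general $\xi \in H^1(\Omega)$ forces its normal trace to vanish, which is exactly \eqref{eq:sens-div2}. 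Here $\bfE \in \PH{1}^3$ from \autoref{thm:reg_maxwell} together with $\chi \in W^{1,\infty}(\Omega)$ guarantees that $\chi\bfE \in \PH{1}^3$, so that these manipulations are valid.

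The regularity statement $\bfE^1 \in \PH{1}^3$ is the real content, and I would establish it through the potential-plus-decomposition scheme of \autoref{thm:reg_maxwell}. Introduce $\psi \in \faktor{H^1(\Omega)}{\R}$ solving $-\div(\kappa\nabla\psi) = \div(\chi\bfE)$ with conormal condition $\kappa\partial_n\psi = -\chi\bfE\cdot\bfn$ on $\Gamma$, and put $\bfE^1_0 = \bfE^1 - \nabla\psi$. Relations \eqref{eq:sens-div1}--\eqref{eq:sens-div2} then give $\div(\kappa\bfE^1_0) = 0$ in $\Omega$ and $\kappa\bfE^1_0\cdot\bfn = 0$ on $\Gamma$, so \cite[Theorem~3.5]{CoDaNi99} applies verbatim and yields $\bfE^1_0 \in \PH{1}^3$. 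It then suffices to show $\nabla\psi \in \PH{1}^3$, i.e.\ $\psi \in \PH{2}$, whence $\bfE^1 = \bfE^1_0 + \nabla\psi \in \PH{1}^3$.

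I expect the regularity of $\psi$ to be the main obstacle, and it is here that a difficulty absent from \autoref{thm:reg_maxwell} surfaces. Since $\kappa$ is only piecewise constant, only $\kappa\bfE\cdot\bfn$ is continuous across the interfaces separating the $\Omega_p$ (by \eqref{eq:div1}), so $\bfE\cdot\bfn$ itself jumps there; consequently $\chi\bfE \notin H(\div)$ globally and $\div(\chi\bfE)$ carries singular terms supported on the interfaces. Thus $\psi$ does not solve a plain Neumann problem but a transmission problem: on each $\Omega_p$ the interior source $\div(\chi\bfE) = \chi\div\bfE + \nabla\chi\cdot\bfE$ lies in $L^2(\Omega_p)$ --- using that $\div\bfE \in L^2(\Omega_p)$, which follows from \eqref{eq:div1} and $\kappa$ being piecewise constant --- while the conormal derivative of $\psi$ undergoes the jump $[\kappa\partial_n\psi] = -[\chi\bfE\cdot\bfn]$ across each interface, where $[\,\cdot\,]$ denotes the interface jump; this jump belongs to $H^{1/2}$ because $\bfE \in \PH{1}^3$ and $\chi \in W^{1,\infty}(\Omega)$. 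I would then appeal to piecewise elliptic regularity for this transmission problem, legitimate because the interfaces and $\Gamma$ are of class $\mcC^{1,1}$ and $\kappa$ is piecewise constant, to conclude $\psi \in \PH{2}$. This transmission estimate, not the Helmholtz decomposition, is where the genuine work lies; the remainder merely transcribes the proof of \autoref{thm:reg_maxwell}.
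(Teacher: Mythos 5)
Your proposal is correct and follows the paper's overall strategy --- derive the two divergence identities, lift them by a scalar potential, and conclude via the decomposition of \cite[Theorem~3.5]{CoDaNi99} exactly as in \autoref{thm:reg_maxwell} --- but it differs in two respects, one cosmetic and one substantive. Cosmetically, you obtain \eqref{eq:sens-div1}--\eqref{eq:sens-div2} by testing the variational formulation with gradients, which has the nice side effect of showing directly that the \emph{sum} $\kappa\bfE^1 + \chi\bfE$ belongs to $H(\div)$ with vanishing normal trace; the paper instead reads \eqref{eq:sens-div1} off the strong equation \eqref{eq:sens-edp} and gets \eqref{eq:sens-div2} from Green's formula, which is equivalent. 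Substantively, the paper's proof disposes of the regularity in one line by asserting that $\chi \in W^{1,\infty}(\Omega)$ makes $\div(\chi\bfE) \in L^2(\Omega)$ and that ``the same arguments as in \autoref{thm:reg_maxwell}'' apply, i.e.\ it solves a plain Neumann problem for the potential. As you observe, for $P > 1$ this assertion implicitly requires $\chi\,[\bfE\cdot\bfn]$ to vanish on the interior interfaces: since $\kappa$ is piecewise constant, only $\kappa\bfE\cdot\bfn$ is continuous there, so in general $\chi\bfE \notin H(\div)$ and $\div(\chi\bfE)$ carries $H^{1/2}$ surface contributions. Your replacement of the Neumann problem by a transmission problem with conormal jump $[\kappa\partial_n\psi] = -[\chi\bfE\cdot\bfn]$, followed by piecewise $H^2$ transmission regularity (legitimate since the interfaces are $\mcC^{1,1}$ and $\kappa$ is piecewise constant --- indeed the same regularity the paper itself invokes, with zero jump data, for the potential $p$ in \autoref{thm:reg_maxwell}, so your citation level matches the paper's), repairs this point and establishes the theorem for arbitrary $\chi \in W^{1,\infty}(\Omega)$, whereas the paper's shorter argument covers without comment only the situations it actually uses later (constant background, or $\supp\chi$ strictly inside one subdomain, as in \autoref{sensitivity_sec}). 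In short, your route buys generality at the interfaces at the cost of setting up and solving one transmission problem; the paper's buys brevity by silently restricting where $\chi$ may live.
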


	\begin{proof}
		Let $\bfE^1 \in H(\curl)$ be the solution of \eqref{eq:sensitivity}. The regularity assumption on $\chi$ implies that $\div(\chi\bfE)$ belongs to $L^2(\Omega)$, and \eqref{eq:sens-div1} follows immediately from \eqref{eq:sens-edp}. The second identity \eqref{eq:sens-div2} can be obtained as in \autoref{thm:reg_maxwell}. Since $\bfE$ belongs to $\PH{1}^3$, its normal trace on $\Gamma$ is an element of $H^{1/2}(\Gamma)$. The same arguments as in \autoref{thm:reg_maxwell} then yield $\bfE^1 \in \PH{1}^3$.
	\end{proof}

	As for the solution of \eqref{eq:maxwell_var}, we get more regularity in the case of a constant function $\kappa$.
	\begin{theorem}
		\label{thm:sens-reg2}
		Let $\chi \in W^{2,\infty}(\Omega)$ and $\kappa$ a constant. Under the assumptions of \autoref{thm:reg_maxwell2}, the solution of \eqref{eq:sensitivity} belongs to $H^2(\Omega)^3$.
	\end{theorem}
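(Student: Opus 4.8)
The plan is to recognize that the sensitivity problem \eqref{eq:sens-edp} is nothing but the Maxwell problem \eqref{eq:maxwell} with the \emph{same} constant coefficient $\kappa$, zero Neumann datum $\bfg = 0$, and modified source $\bfF^1 \coloneqq k^2\chi\bfE$. Consequently $\bfE^1$ is exactly the variational solution associated with the data $(\bfF^1, 0)$, and it suffices to verify that this pair meets the hypotheses of \autoref{thm:reg_maxwell2}; the conclusion $\bfE^1 \in H^2(\Omega)^3$ then follows by a direct application of that theorem to $\bfE^1$. This strategy avoids reproducing the embedding argument of \cite{AmBeDaGi98} and reduces the whole proof to a regularity bookkeeping for $\bfF^1$.

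First I would record what is already available. Under the assumptions of \autoref{thm:reg_maxwell2}, that theorem applied to $\bfE$ itself yields $\bfE \in H^2(\Omega)^3$; in particular $\div\bfE \in H^1(\Omega)$ and the normal trace satisfies $\bfE \cdot \bfn \in H^{3/2}(\Gamma)$. With these at hand I would check the three requirements on $\bfF^1 = k^2\chi\bfE$. Since $\chi \in W^{2,\infty}(\Omega)$ is a multiplier on $H^2(\Omega)$, the product $\chi\bfE$ lies in $H^2(\Omega)^3$, hence $\bfF^1 \in H^1(\Omega)^3$. For the divergence I would use the product rule
\[\div(\chi\bfE) = \nabla\chi \cdot \bfE + \chi\,\div\bfE,\]
where $\nabla\chi \in W^{1,\infty}(\Omega)^3$ multiplies $\bfE \in H^1(\Omega)^3$ into $H^1(\Omega)$, and $\chi \in W^{1,\infty}(\Omega)$ multiplies $\div\bfE \in H^1(\Omega)$ into $H^1(\Omega)$; hence $\div\bfF^1 \in H^1(\Omega)$. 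For the normal trace I would argue that $\chi\bfE \in H^2(\Omega)^3$ admits a normal trace $\chi\bfE \cdot \bfn \in H^{3/2}(\Gamma)$, writing $\chi\bfE \cdot \bfn = (\restriction{\chi}{\Gamma})(\bfE \cdot \bfn)$ with $\restriction{\chi}{\Gamma}$ a boundary multiplier and $\bfE \cdot \bfn \in H^{3/2}(\Gamma)$, and using that on a $\mcC^{2,1}$ boundary the normal field $\bfn$ is of class $\mcC^{1,1}$ and thus itself a multiplier on $H^{3/2}(\Gamma)$. Finally $\bfg = 0 \in H^{3/2}(\div_\Gamma)$ trivially.

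The hard part will be the two estimates $\div\bfF^1 \in H^1(\Omega)$ and $\bfF^1 \cdot \bfn \in H^{3/2}(\Gamma)$, since these are precisely the places where the full strength $\chi \in W^{2,\infty}(\Omega)$ (rather than merely $W^{1,\infty}$) is consumed: the gradient $\nabla\chi$ must itself be Lipschitz to act as an $H^1$-multiplier, and the boundary trace of $\chi$ must be regular enough to preserve $H^{3/2}(\Gamma)$. Once these multiplier properties are justified, the three data hypotheses of \autoref{thm:reg_maxwell2} hold for $(\bfF^1, 0)$, and applying that theorem to $\bfE^1$ delivers $\bfE^1 \in H^2(\Omega)^3$, which is the claim.
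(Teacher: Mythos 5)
Your proposal is correct, and it packages the argument differently from the paper. The paper's proof works on the solution side: it uses the identities \eqref{eq:sens-div1} and \eqref{eq:sens-div2} (which, for constant $\kappa$, give $\div\bfE^1 = -\kappa^{-1}\div(\chi\bfE)$ and $\kappa\,\bfE^1\cdot\bfn = -\chi\bfE\cdot\bfn$) to conclude $\div\bfE^1 \in H^1(\Omega)$ and $\bfE^1\cdot\bfn \in H^{3/2}(\Gamma)$, shows $\curl\bfE^1 \in H^2(\Omega)^3$ by the same auxiliary-field argument $\bfw = \curl\bfE^1$ as in \autoref{thm:reg_maxwell2}, and then invokes the embedding result of \cite{AmBeDaGi98} once more. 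You instead work on the data side: since \eqref{eq:sensitivity} is exactly \eqref{eq:maxwell_var} with the same constant $\kappa$, source $\bfF^1 = k^2\chi\bfE$ and $\bfg = 0$, you verify that $(\bfF^1,0)$ satisfies the hypotheses of \autoref{thm:reg_maxwell2} and cite that theorem as a black box, never reopening its proof. The two routes consume $\chi \in W^{2,\infty}(\Omega)$ at literally the same computations — your checks $\div\bfF^1 \in H^1(\Omega)$ and $\bfF^1\cdot\bfn \in H^{3/2}(\Gamma)$ coincide, up to the constant factor $-\kappa^{-1}$, with the paper's bounds on $\div\bfE^1$ and $\bfE^1\cdot\bfn$ — so the mathematical content is the same, but your reduction is more modular: it spares the repetition of the curl step and the second appeal to \cite{AmBeDaGi98}, at the price of the multiplier bookkeeping ($W^{2,\infty}$ multiplies $H^2(\Omega)$; $\nabla\chi \in W^{1,\infty}$ multiplies $H^1$; the $\mcC^{1,1}$ normal multiplies $H^{3/2}(\Gamma)$), all of which you justify correctly. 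One point to state explicitly for full rigor: $\bfE^1$ is \emph{the} (unique) variational solution for the data $(\bfF^1,0)$ by \autoref{thm:sensitivity} and \autoref{thm:sol_maxwell}, which is what licenses applying the regularity theorem to it.
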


	\begin{proof}
		Under the given assumptions, the solution $\bfE$ of \eqref{eq:maxwell_var} belongs to $H^2(\Omega)^3$ according to \autoref{thm:reg_maxwell2}. Together with the regularity of $\chi$, we thus infer from \eqref{eq:sens-div1} and \eqref{eq:sens-div2} that $\div\bfE^1 \in H^1(\Omega)$ and $\bfE^1 \cdot \bfn \in H^{3/2}(\Gamma)$. As in the proof of \autoref{thm:reg_maxwell2}, $\curl\bfE$ can be shown to belong to $H^2(\Omega)^3$. The regularity result follows from \cite{AmBeDaGi98}.
	\end{proof}

	\subsection{Some properties of the sensitivity}

	In this section, we prove some properties of the sensitivity that are directly linked to the linearity of the Gâteaux derivative.

	\begin{proposition}
		\label{prop:partial_derivatives}
		Let $\tau \in \mcP_\text{adm}$. Let $\varrho = (\varrho_{\eps},\varrho_{\sigma}) \in \mcP$. Then we have
		\[D_\varrho\bfE(\cdot,\tau) = D_{(\varrho_{\eps},0)}\bfE(\cdot,\tau) + D_{(0,\varrho_{\sigma})}\bfE(\cdot,\tau).\]
	\end{proposition}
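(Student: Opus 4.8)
The plan is to deduce this decomposition directly from the linearity of the Gâteaux derivative $\varrho \mapsto D_\varrho\bfE(\cdot,\tau)$, which was already established at the end of the proof of \autoref{thm:sensitivity}. The key observation is that the direction $\varrho = (\varrho_\eps, \varrho_\sigma)$ admits the trivial decomposition
\[
	\varrho = (\varrho_\eps, \varrho_\sigma) = (\varrho_\eps, 0) + (0, \varrho_\sigma),
\]
which is an identity in the parameter space $\mcP$, since $\mcP$ is a vector space and both summands clearly belong to it.

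First I would set $\varrho_1 = (\varrho_\eps, 0)$ and $\varrho_2 = (0, \varrho_\sigma)$, so that $\varrho = \varrho_1 + \varrho_2$. Then I would invoke the additivity part of the linearity result proved in \autoref{thm:sensitivity}, namely the identity
\[
	D_{\lambda\varrho_1 + \varrho_2}\bfE(\cdot,\tau) = \lambda D_{\varrho_1}\bfE(\cdot,\tau) + D_{\varrho_2}\bfE(\cdot,\tau),
\]
applied with $\lambda = 1$. Substituting $\varrho_1 = (\varrho_\eps,0)$ and $\varrho_2 = (0,\varrho_\sigma)$ gives exactly
\[
	D_\varrho\bfE(\cdot,\tau) = D_{(\varrho_\eps,0)}\bfE(\cdot,\tau) + D_{(0,\varrho_\sigma)}\bfE(\cdot,\tau),
\]
which is the claimed statement. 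The partial derivatives on the right are well defined because the hypotheses of \autoref{thm:sensitivity} are satisfied for each of the directions $(\varrho_\eps,0)$ and $(0,\varrho_\sigma)$ separately (both lie in $\mcP$, and the same $h_0$ works).

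I expect there to be essentially no obstacle here: the proposition is an immediate corollary of the linearity already proven, and the only thing to verify is that the decomposition $\varrho = (\varrho_\eps,0) + (0,\varrho_\sigma)$ is legitimate within $\mcP$ and that each summand is an admissible direction. The content of the statement is interpretive rather than technical—it isolates the physically meaningful fact that the sensitivity with respect to the permittivity and the sensitivity with respect to the conductivity decouple additively. If one wished to avoid simply citing the earlier linearity, one could alternatively argue directly: the right-hand side $\bfE^1 := D_{(\varrho_\eps,0)}\bfE(\cdot,\tau) + D_{(0,\varrho_\sigma)}\bfE(\cdot,\tau)$ solves the variational problem \eqref{eq:sensitivity} with source direction $\varrho$ by summing the two defining variational identities, and then one concludes by the uniqueness of the solution to \eqref{eq:sensitivity}. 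Either route is short; the citation of \autoref{thm:sensitivity} is the cleaner one.
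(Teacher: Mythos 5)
Your proposal is correct and matches the paper's intent exactly: the paper states this proposition without a separate proof, presenting it (as announced at the start of that subsection) as an immediate consequence of the linearity of $\varrho \mapsto D_\varrho\bfE(\cdot,\tau)$ established at the end of the proof of \autoref{thm:sensitivity}, which is precisely the decomposition $\varrho = (\varrho_\eps,0) + (0,\varrho_\sigma)$ plus additivity that you invoke. Your alternative uniqueness-based argument is also sound, but the citation route is the one the paper relies on.
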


	\begin{proposition}
		\label{prop:same_direction}
		Let $\tau \in \mcP_\text{adm}$. Let $\varrho \in \PH{3}$. We set $\bfE^1_\eps \coloneqq D_{(\varrho,0)}\bfE(\cdot,\tau)$ and $\bfE^1_\sigma \coloneqq D_{(0,\varrho)}\bfE(\cdot,\tau)$. Then
		\[\bfE^1_\eps = -i\omega\bfE^1_\sigma.\]
	\end{proposition}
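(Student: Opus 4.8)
The plan is to exploit the explicit form of the sensitivity equation~\eqref{eq:sensitivity} together with the uniqueness of its solution, which is guaranteed by \autoref{thm:sensitivity}. The key observation is that the right-hand side of~\eqref{eq:sensitivity} depends on the direction $\varrho = (\varrho_\eps,\varrho_\sigma)$ only through the scalar factor $\varrho_\eps + i\varrho_\sigma/\omega$ multiplying $\bfE$. So I would simply compare the variational problems solved by $\bfE^1_\eps$ and by $\bfE^1_\sigma$ and check that one is a scalar multiple of the other.

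Concretely, first I would write down the defining equation for $\bfE^1_\eps = D_{(\varrho,0)}\bfE(\cdot,\tau)$, obtained by setting $\varrho_\eps = \varrho$ and $\varrho_\sigma = 0$ in~\eqref{eq:sensitivity}:
\[
\dotprod{\curl\bfE^1_\eps}{\curl\bfphi}{} - k^2\dotprod{\kappa\bfE^1_\eps}{\bfphi}{} = \frac{k^2}{\eps_0}\dotprod{\varrho\bfE}{\bfphi}{}, \quad \forall \bfphi \in H(\curl).
\]
Next I would write the equation for $\bfE^1_\sigma = D_{(0,\varrho)}\bfE(\cdot,\tau)$, obtained by setting $\varrho_\eps = 0$ and $\varrho_\sigma = \varrho$:
\[
\dotprod{\curl\bfE^1_\sigma}{\curl\bfphi}{} - k^2\dotprod{\kappa\bfE^1_\sigma}{\bfphi}{} = \frac{k^2}{\eps_0}\dotprod*{i\frac{\varrho}{\omega}\bfE}{\bfphi}{}, \quad \forall \bfphi \in H(\curl).
\]

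Then I would multiply the equation for $\bfE^1_\sigma$ by the constant $-i\omega$. Using the sesquilinearity of the $L^2$ inner product, the right-hand side becomes $-i\omega \cdot \frac{k^2}{\eps_0}\dotprod{i\frac{\varrho}{\omega}\bfE}{\bfphi}{} = \frac{k^2}{\eps_0}\dotprod{\varrho\bfE}{\bfphi}{}$, which coincides exactly with the right-hand side of the equation for $\bfE^1_\eps$. Consequently $-i\omega\bfE^1_\sigma$ satisfies precisely the same variational problem~\eqref{eq:sensitivity} (with direction $(\varrho,0)$) as $\bfE^1_\eps$. Since \autoref{thm:sensitivity} asserts that this problem has a unique solution in $H(\curl)$, I conclude $\bfE^1_\eps = -i\omega\bfE^1_\sigma$.

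The only subtlety — and the single point that deserves care rather than being a genuine obstacle — is the placement of the conjugation in the sesquilinear form. One must be consistent about whether the scalar $\varrho_\sigma + i$ coefficient sits in the first or second slot of $\dotprod{\cdot}{\cdot}{}$, so that pulling out the constant $-i\omega$ produces $+1$ rather than an unwanted conjugate or sign. Given that $\varrho$ is real-valued (indeed $\varrho \in \PH{3}$ is a real parameter direction) and the factor is placed in the first argument as written in~\eqref{eq:sensitivity}, the scalar $i/\omega$ comes out linearly and the computation closes cleanly. Everything else is an immediate application of linearity and uniqueness.
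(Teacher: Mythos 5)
Your proposal is correct and takes essentially the same route as the paper: both arguments rest on the linearity of the sensitivity equation in the direction $\varrho$ and on the uniqueness of the solution guaranteed by \autoref{thm:sensitivity} (via \autoref{thm:sol_maxwell}). The only cosmetic difference is that the paper sets $\bfE^1 = \bfE^1_\eps + i\omega\bfE^1_\sigma$ and shows it solves the homogeneous problem \eqref{eq:maxwell_var} with $\bfF = 0$, $\bfg = 0$, hence vanishes, whereas you multiply the equation for $\bfE^1_\sigma$ by $-i\omega$ and identify it with the problem solved by $\bfE^1_\eps$ --- trivially equivalent formulations of the same linearity-plus-uniqueness argument.
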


	\begin{proof}
		Let $\bfE^1 = \bfE^1_\eps + i\omega\bfE^1_\sigma$. The result will be proved if we show that $\bfE^1 = 0$. To this end, let $\bfphi \in H(\curl)$. We have
		\begin{equation}
			\label{eq:same_direction_E1}
			\begin{multlined}[c][0.9\textwidth]
				\dotprod{\curl\bfE^1}{\curl\bfphi}{} - k^2\dotprod{\kappa\bfE^1}{\bfphi}{} = \dotprod{\curl\bfE^1_\eps}{\curl\bfphi}{} - k^2\dotprod{\kappa\bfE^1_\eps}{\bfphi}{} \\
				+ i\omega\left(\dotprod{\curl\bfE^1_\sigma}{\curl\bfphi}{} - k^2\dotprod{\kappa\bfE^1_\sigma}{\bfphi}{}\right).
			\end{multlined}
		\end{equation}
		We then apply \autoref{thm:sensitivity} to $\bfE^1_\eps$ and $\bfE^1_\sigma$ to find
		\begin{equation}
			\label{eq:same_direction_E1eps}
			\dotprod{\curl\bfE^1_\eps}{\curl\bfphi}{} - k^2\dotprod{\kappa\bfE^1_\eps}{\bfphi}{} = \frac{k^2}{\eps_0}\dotprod{\varrho\bfE}{\bfphi}{}
		\end{equation}
		and
		\begin{equation}
			\label{eq:same_direction_E1sigma}
			\dotprod{\curl\bfE^1_\sigma}{\curl\bfphi}{} - k^2\dotprod{\kappa\bfE^1_\sigma}{\bfphi}{} = i\frac{k^2}{\eps_0\omega}\dotprod{\varrho\bfE}{\bfphi}{}.
		\end{equation}
		We now inject \eqref{eq:same_direction_E1eps} and \eqref{eq:same_direction_E1sigma} in \eqref{eq:same_direction_E1} to find
		\[\dotprod{\curl\bfE^1}{\curl\bfphi}{} - k^2\dotprod{\kappa\bfE^1}{\bfphi}{} = \left(\frac{k^2}{\eps_0} - \omega\frac{k^2}{\eps_0\omega}\right)\dotprod{\varrho\bfE}{\bfphi}{} = 0.\]
		Then $\bfE^1$ is solution of \eqref{eq:maxwell_var} with $\bfF = 0$ and $\bfg = 0$. By the uniqueness of the solution, we find that $\bfE^1 = 0$.
	\end{proof}

	\begin{remark}
		For large frequencies $\omega$, \autoref{prop:same_direction} thus implies that the derivatives of the electric field with respect to the parameters $\eps$ and $\sigma$ are not of the same order whenever the directions in which the derivatives are taken have comparable norms of order $\mcO(1)$. This statement suggests to study sensitivity with respect to the permittivity in a direction of order $1/\omega$. We refer to \autoref{fig:sensitivity_xy} for an illustration.
	\end{remark}

	We are interested in studying how the location of a perturbation affects the electric field. Therefore, we focus on derivatives in the direction of characteristic functions of the perturbations' supports. The numerical results of the \autoref{sec:numerical_results} show that in this case the sensitivity is localized and illustrate the following proposition.

	\begin{proposition}
		\label{prop:disjoint}
		Let $(P_j)_{1 \leq j \leq N}$ be a collection of $N \in \N^*$ subsets of $\Omega$ such that
		\[P_{j_1} \cap P_{j_2} = \emptyset, \quad \forall j_1 \neq j_2.\]
		For all $j \in \zinterval{1}{N}$, we denote by $\varrho_j$ the indicator function of $P_j$. Let $\varrho$ be the indicator function of $\bigcup_{j=1}^N P_j$. Then we have
		\[D_{(\varrho,0)}\bfE(\cdot,\tau) = \sum_{j=1}^N D_{(\varrho_j,0)}\bfE(\cdot,\tau) \quad \text{and} \quad D_{(0,\varrho)}\bfE(\cdot,\tau) = \sum_{j=1}^N D_{(0,\varrho_j)}\bfE(\cdot,\tau),\]
		for any $\tau \in \mcP_\text{adm}$.
	\end{proposition}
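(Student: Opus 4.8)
The plan is to reduce the statement to the linearity of the sensitivity derivative already established in \autoref{thm:sensitivity}. The starting observation is purely set-theoretic: since the $P_j$ are pairwise disjoint, every point of $\bigcup_{j=1}^N P_j$ lies in exactly one $P_j$, so the indicator functions satisfy $\varrho = \sum_{j=1}^N \varrho_j$ pointwise almost everywhere, and hence in $L^\infty(\Omega)$. This additive decomposition of the direction is the only structural input; everything else is linearity and uniqueness.

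First I would lift this identity to the parameter space by writing $(\varrho,0) = \sum_{j=1}^N (\varrho_j,0)$, which is immediate from the previous pointwise equality. Then I would invoke the linearity of the map $\varrho \mapsto D_\varrho\bfE(\cdot,\tau)$ proved in \autoref{thm:sensitivity} (the same additivity already used in \autoref{prop:partial_derivatives}) to distribute the derivative over the finite sum:
\[D_{(\varrho,0)}\bfE(\cdot,\tau) = D_{\sum_{j=1}^N (\varrho_j,0)}\bfE(\cdot,\tau) = \sum_{j=1}^N D_{(\varrho_j,0)}\bfE(\cdot,\tau).\]
The identity for the conductivity direction is obtained verbatim upon replacing $(\varrho,0)$ by $(0,\varrho)$ and each $(\varrho_j,0)$ by $(0,\varrho_j)$, so no separate argument is needed.

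The one point that genuinely requires care — and the main obstacle — is that the characteristic functions $\varrho_j$ must be legitimate directions to which the linearity result applies. A nonconstant indicator function need not belong to $\PH{3}$ (it typically fails to be even in $H^1$ across $\partial P_j$), so it may lie outside the space $\mcP$ in which Gâteaux differentiability was proved. I see two ways to close this gap. The conservative option is to assume, consistently with the intended application, that each $P_j$ is a union of subdomains $\Omega_p$, so that $\varrho_j$ is piecewise constant and therefore in $\PH{3}$; then \autoref{thm:sensitivity} applies directly. The robust option, which matches the way the proposition is actually phrased, is to work at the level of the sensitivity equation \eqref{eq:sensitivity}: its right-hand side is the functional $\bfphi \mapsto \frac{k^2}{\eps_0}\dotprod{(\varrho_{\eps} + i\varrho_{\sigma}/\omega)\bfE}{\bfphi}{}$, which is a bounded linear form on $H(\curl)$ as soon as $(\varrho_{\eps} + i\varrho_{\sigma}/\omega)\bfE \in L^2(\Omega)^3$, and this holds for every $L^\infty$ direction because $\bfE \in L^2(\Omega)^3$. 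One therefore defines $D_\varrho\bfE(\cdot,\tau)$ for such directions as the unique solution of \eqref{eq:sensitivity}, existence and uniqueness being furnished by \autoref{thm:sol_maxwell}. The asserted additivity is then nothing but the linearity of this source term in $\varrho$ together with uniqueness of the solution — precisely the linearity argument already carried out at the end of the proof of \autoref{thm:sensitivity}. This second route is what makes the statement meaningful for arbitrary disjoint supports, and it is the step I would be most careful to spell out.
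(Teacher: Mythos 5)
Your proof is correct and follows essentially the same route as the paper, which states this proposition without a separate proof precisely because it is a direct consequence of the disjointness identity $\varrho = \sum_{j=1}^N \varrho_j$ and the linearity of $\varrho \mapsto D_\varrho\bfE(\cdot,\tau)$ established in \autoref{thm:sensitivity}. Your additional observation about whether indicator functions are admissible directions is a legitimate point the paper glosses over, and your resolution is sound — indeed, the proof of \autoref{thm:sensitivity} only ever uses $\varrho_\eps, \varrho_\sigma \in L^\infty(\Omega)$, so the differentiability and linearity argument applies verbatim to such directions.
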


	\subsection{Numerical results and comments }
	\label{sec:numerical_results}

	We implemented the numerical solver for 3D Maxwell's equations with FreeFem++ (see~\cite{Hecht12}). Our test domain $\Omega$ is the unit ball of $\R^3$. We consider a tetrahedral mesh $\mcT_h$. For any $T \in \mcT_h$, let $h_T$ be its diameter. Then $h = \max_{T \in \mcT_h} h_T$ is the mesh parameter of $\mcT_h$. For any $h$, we denote by $N_e$ the number of edges. Edge finite elements of order 1 (see~\cite{Monk03,Nedelec86}) are used to approximate the respective solutions of the problem~\eqref{eq:maxwell_var} and of the sensitivity equation~\eqref{eq:sensitivity}.

	We consider that $\Omega$ is filled with a homogeneous medium of constant electrical permittivity $\eps = \SI{1e-8}{\farad\per\meter}$ and conductivity $\sigma = \SI{0.33}{\siemens\per\meter}$ at the fixed frequency $\omega = \SI{1e6}{\hertz}$. The mesh characteristics are $h = 0.12$ and $N_e = \num{167402}$. The sensitivity $\bfE^1$ of the electric field in a given direction $\varrho = (\varrho_{\eps}, \varrho_{\sigma})$ is computed as the solution of equation~\eqref{eq:sensitivity}.

	First, we compare the modulus of the sensitivity with respect to a perturbation either of the conductivity or the permittivity (see \autoref{fig:sensitivity_position}, left and right). This perturbation is modeled by a sphere $B = B_\alpha(\bfx_0)$ of radius $\alpha = 0.1$, centered at $\bfx_0 = (-0.8,0,0)$. The respective directions are $\varrho = (\textbf{1}_{B}/\omega,0)$ for the permittivity and $\varrho = (0, \textbf{1}_{B})$ for the conductivity. This result illustrates \autoref{prop:same_direction} which implies $|\bfE^1_\eps| = |\bfE^1_\sigma|$. In the sequel, we consider a perturbation of the conductivity only. In the bottom of \autoref{fig:sensitivity_position}, the perturbation is placed at a different position. The simulation indicates how the position of the inhomogeneity affects sensitivity. In particular, it shows that the sensitivity is localized to a surface area close the inhomogeneity.

	\begin{figure}[p]
		\centering
		\includegraphics[width=0.45\textwidth]{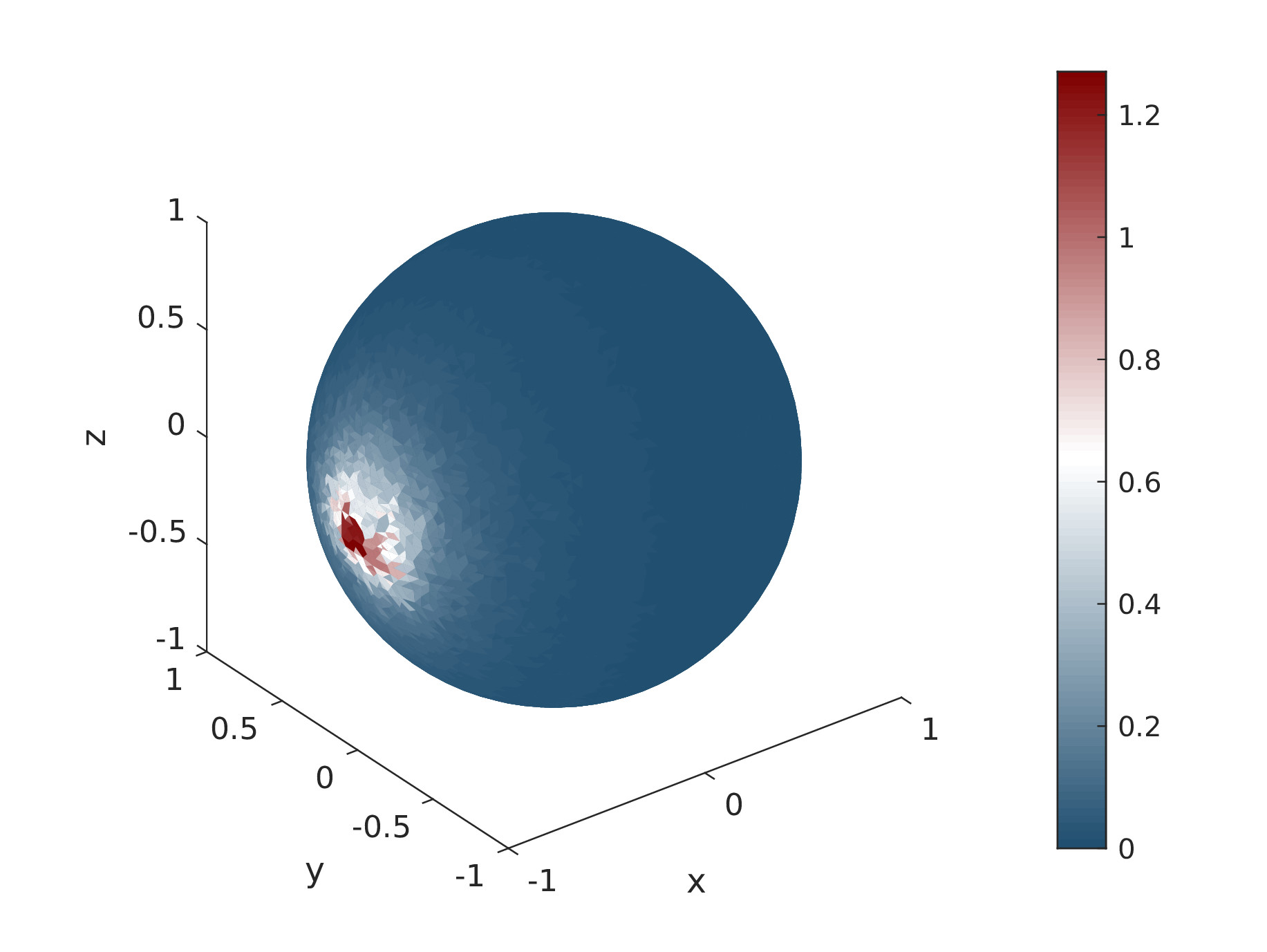}
		\hfill
		\includegraphics[width=0.45\textwidth]{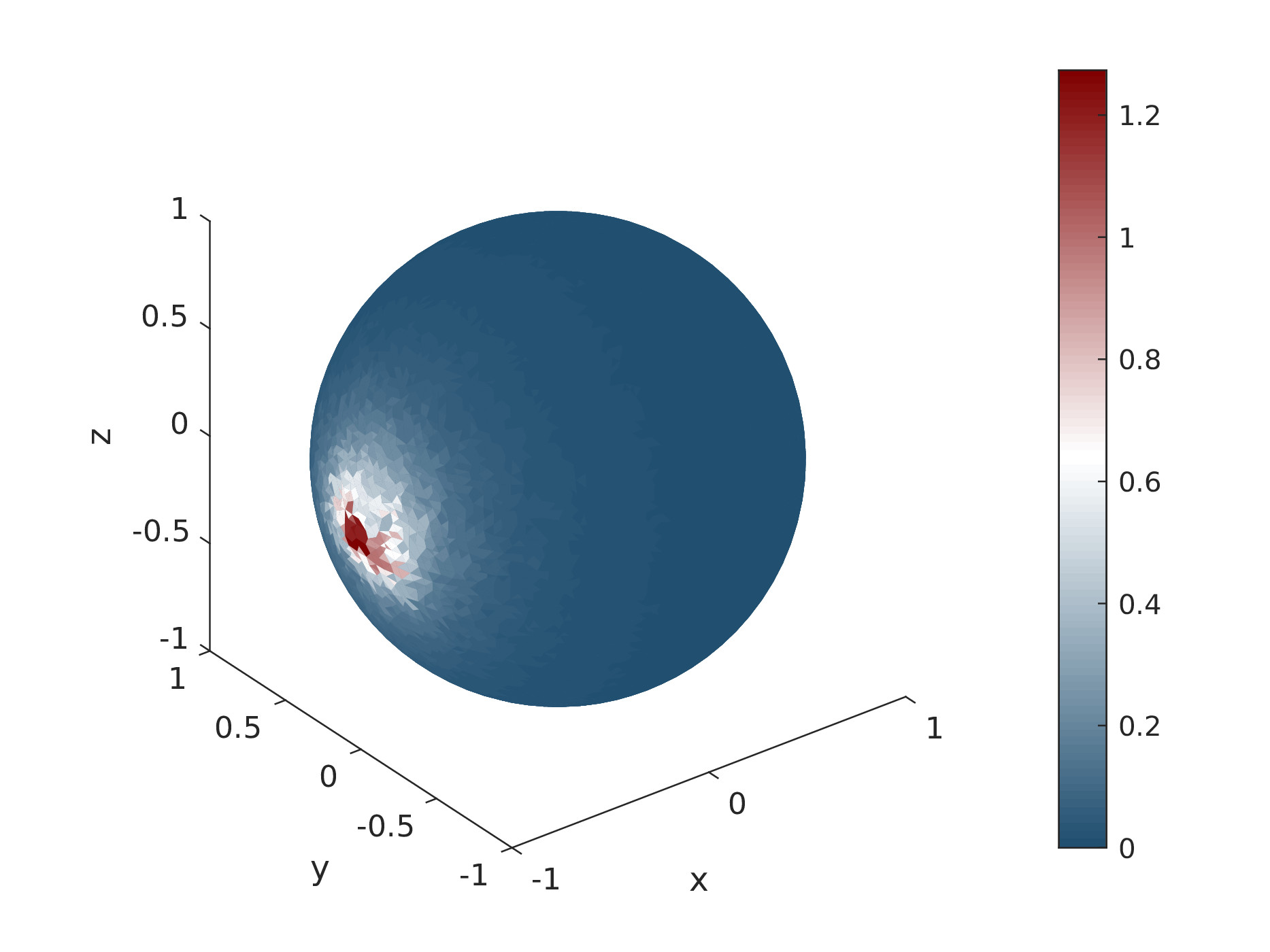}
		\begin{minipage}[c]{0.45\textwidth}
			\includegraphics[width=\textwidth]{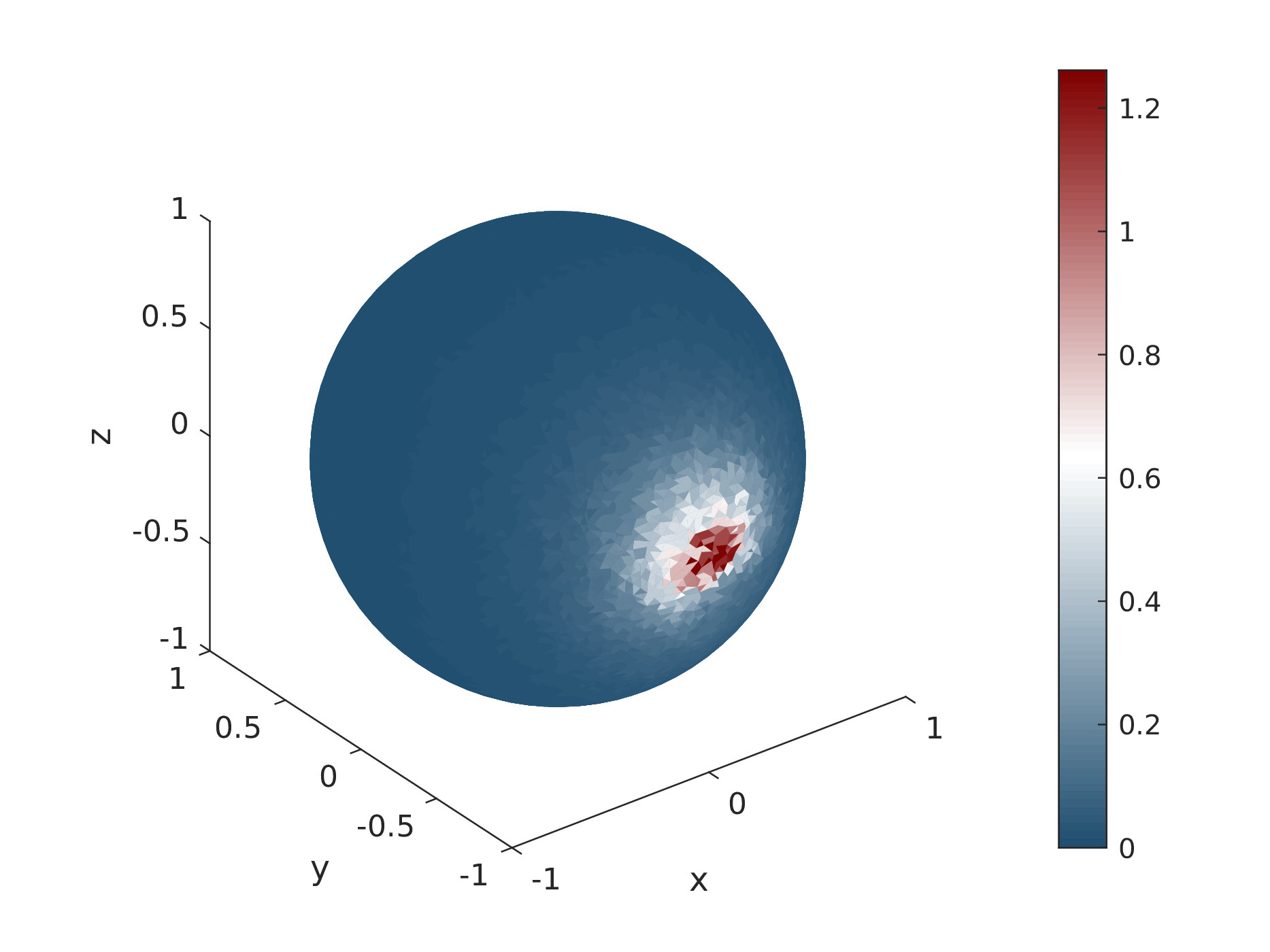}
		\end{minipage}
		\hfill
		\begin{minipage}[c]{0.45\textwidth}
			\caption{Sensitivity of the electric field on the surface with respect to the parameter (top left: inhomogeneity in the conductivity, top right: inhomogeneity in the permittivity) and the position of the inhomogeneity (top: centered at $\bfx_0 = (-0.8,0,0)$ on the x-axis, bottom: centered at $\bfx_0 = (0,-0.8,0)$ on the y-axis).}
			\label{fig:sensitivity_position}
		\end{minipage}
	\end{figure}

	In \autoref{fig:sensitivity_deep}, we report the sensitivity corresponding to a spherical inhomogeneity centered at $\bfx_0 = (-0.55,0,0)$ for different volumes. Compared to the perturbation at $\bfx_0 = (-0.8,0,0)$ (see \autoref{fig:sensitivity_position}, left), we observe that a deeper inhomogeneity leads to more spreaded surfacic perturbations. Moreover, increasing the inhomogeneity's size does not change the shape, but increases the amplitude of the sensitivity (see \autoref{fig:sensitivity_deep}, right).

	\begin{figure}[p]
		\centering
		\includegraphics[width=0.45\textwidth]{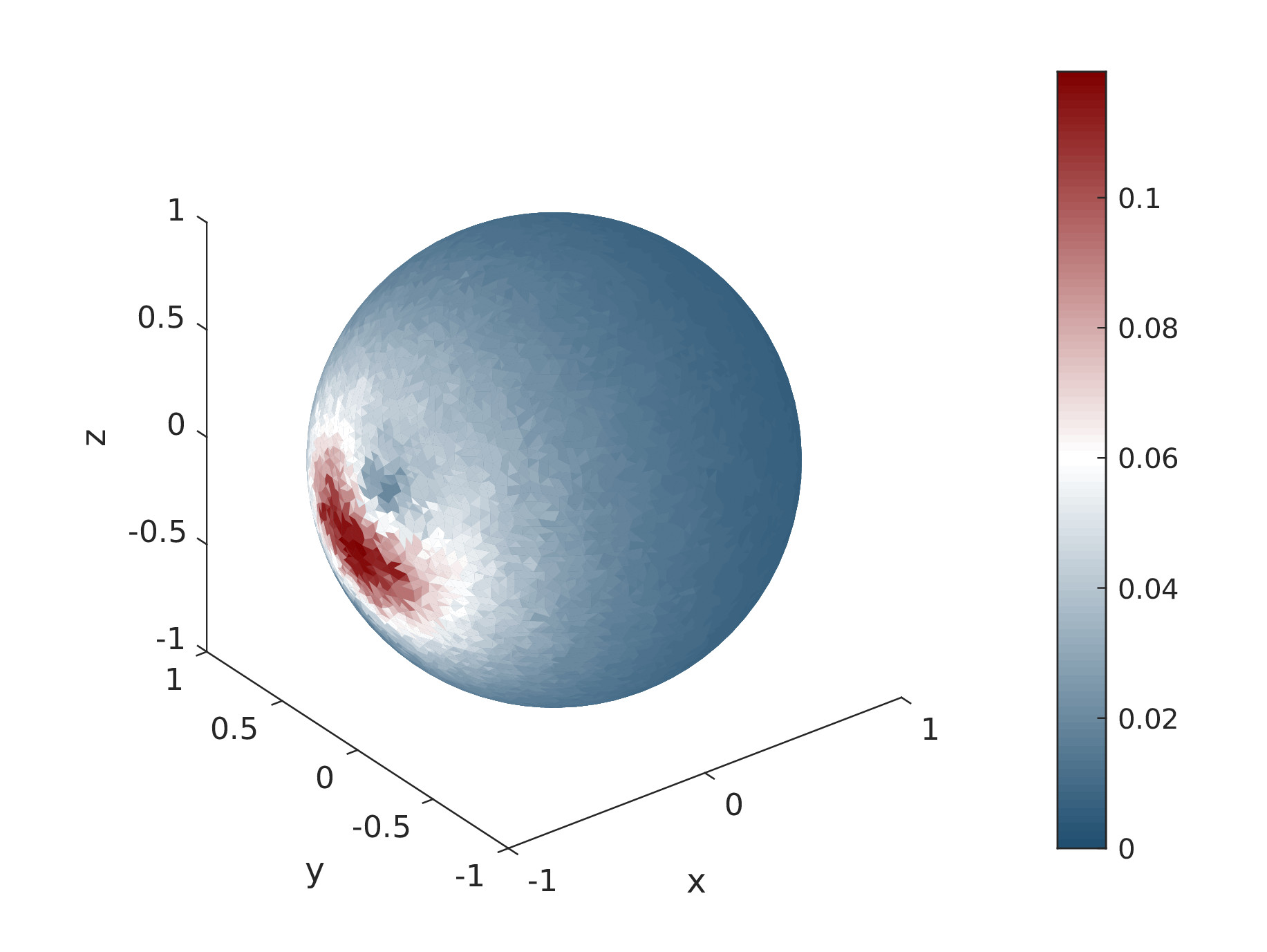}
		\hfill
		\includegraphics[width=0.45\textwidth]{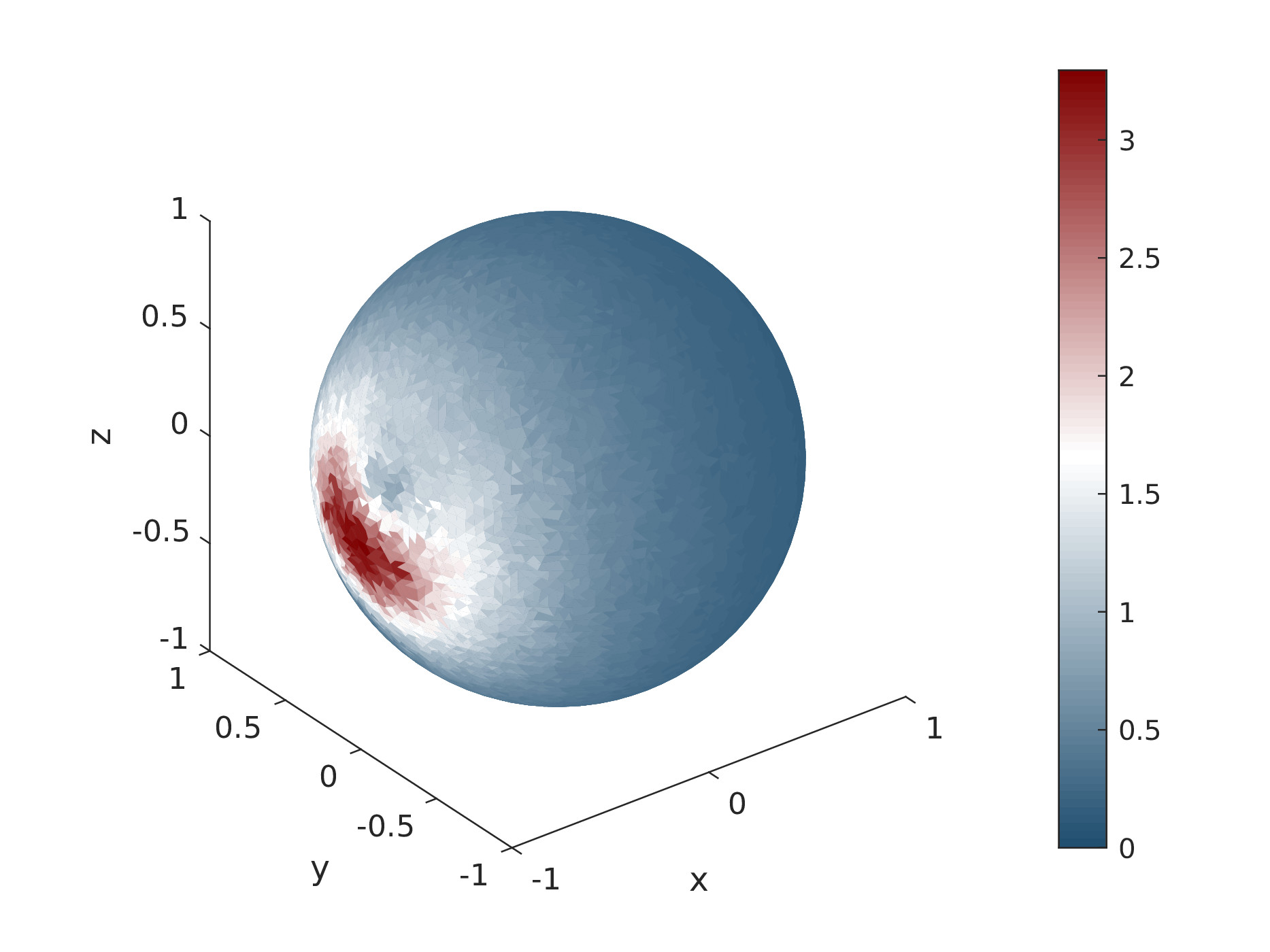}
		\caption{Sensitivity of the electric field on the surface with respect to volume of the inhomogeneity (left: $\alpha = 0.1$, right: $\alpha = 0.3$).}
		\label{fig:sensitivity_deep}
	\end{figure}

	In \autoref{fig:sensitivity_xy}, we present the sensitivity corresponding to two spherical inhomogeneities: one centered at $\bfx_0 = (-0.85,0,0)$ of radius $\alpha=0.1$ and the other centered at $(0,-0.7,0)$ of radius $\alpha=0.2$. We retrieve two surfacic perturbations corresponding to each inhomogeneity in agreement with Proposition~\ref{prop:disjoint}.

	\begin{figure}[p]
		\centering
		\includegraphics[width=0.45\textwidth]{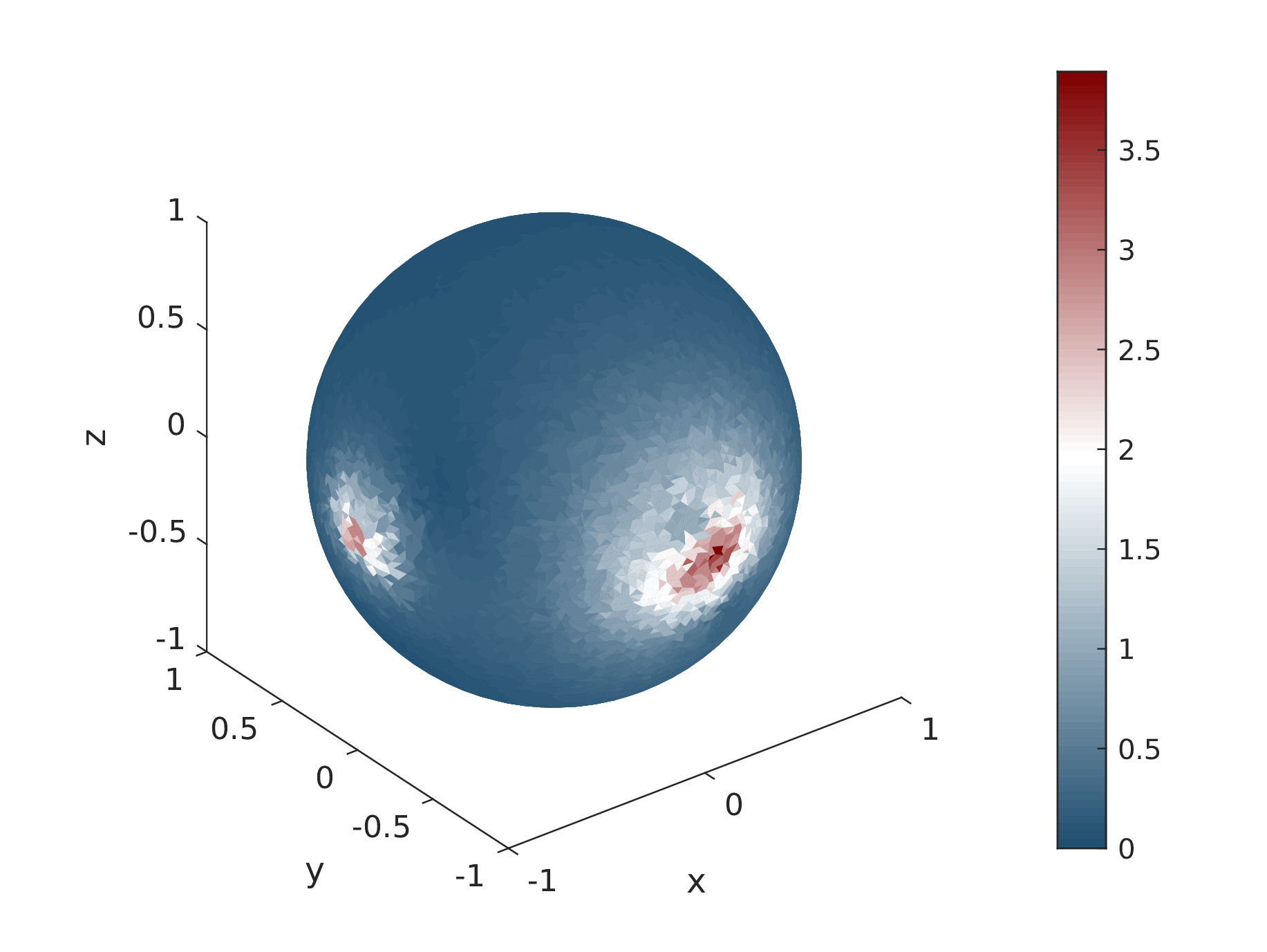}
		\caption{Sensitivity of the electric field on the surface corresponding to two disjoint inhomogeneities.}
		\label{fig:sensitivity_xy}
	\end{figure}

	The numerical results of Figures~\ref{fig:sensitivity_position} to \ref{fig:sensitivity_xy} emphasize that sensitivity analysis provides information about those surface areas on which the electrical field is affected by small variations in the electric parameters of the medium. More precisely, the values of the sensitivity $\bfE^1$ give insights about the inhomogeneities's locations and sizes. We will see in \autoref{sec:inverse} that it is a useful tool for solving the inverse problem of reconstructing the support of a perturbation in the permittivity and/or conductivity from boundary data. Indeed, the solution of the sensitivity equation \eqref{eq:sensitivity} is linked to the perturbed electric field in the following way. Let $\tau \coloneqq (\eps,\sigma) \in \mcP_\textrm{adm}$ and $\varrho = (\varrho_{\eps}, \varrho_{\sigma}) \in \mcP$, both fixed. As suggested in \cite{BorggaardEtiennePelletierEtAl02}, a first order Taylor expansion of the solution $\bfE_p \coloneqq \bfE(\cdot, \tau + h\varrho)$ of the perturbed problem with parameters $\tau+h\varrho$ for small-amplitudes of order $h$, $0 < h \ll 1$, yields
	\begin{equation}
		\label{eq:taylor}
		\bfE(\cdot, \tau + h\varrho) - \bfE(\cdot, \tau) \approx hD_\varrho\bfE(\cdot, \tau).
	\end{equation}
	In other words, for small values of $h$, the boundary data (measurements) $(\bfE_p - \bfE) \times \bfn$ have the same behavior as the Gâteaux derivative of the electric field $\bfE$ in the direction $\varrho$.

	\section{Sensitivity analysis in the case of a constant background}
	\label{sensitivity_sec}

	In this section, we identify the tangential trace of the Gâteaux derivative $\bfE^1$ as the solution of a boundary integral equation. Estimates of the right hand side of this equation exhibit some relations between the sensitivity $\bfE^1$ and geometric characteristics of the perturbation.

	In the sequel, we assume that the material parameters $\eps$ and $\sigma$ of the unperturbed background medium are positive constants. In order to simplify the notations, we introduce the complex-valued wavenumber $\xi$ which is defined by
	\begin{equation}
		\label{eq:k_const}
		\xi^2 \coloneqq k^2 \kappa = \omega^2\mu_0\left(\varepsilon + i\frac{\sigma}{\omega}\right)
	\end{equation}
	where $\varepsilon > 0$ and $\sigma > 0$.

	Let $\bfE$ be the solution of \eqref{eq:maxwell_var} associated with $\xi$. Let $B = B_\alpha(\bfx_0)$ be the sphere of radius $\alpha > 0$ and center $\bfx_0 \in \Omega$. We assume that the distance between $B$ and the boundary $\Gamma$ is at least equal to a given value $\beta > 0$ and we choose a neighborhood $\mcV_\beta(\Gamma)$ of $\Gamma$ such that $\mcV_\beta(\Gamma) \cap B = \emptyset$.

	The perturbation occurs in the domain $B$ and will be described by a function
	\[f\colon \bfx \mapsto \omega^2\mu_0\left(\varrho_{\eps}(\bfx) + i\frac{\varrho_{\sigma}(\bfx)}{\omega}\right)\]
	where $(\varrho_{\eps},\varrho_{\sigma}) \in \mcP$. We assume that $f$ is regular, $f \in W^{2,\infty}(\Omega)$, and that $\supp(f)\subset B$.

	According to \autoref{sec:sensitivity} (see \eqref{eq:sens-edp}), the Gâteaux derivative of $\bfE$ with respect to the (constant) parameters $\tau = (\varepsilon,\sigma)$ is solution of the boundary value problem,
	\begin{equation}
		\label{eq:sens_k_const}
		\left\{
		\begin{array}{rcl@{\hspace{4\tabcolsep}}l}
			\curl\curl\bfE^1 - \xi^2\bfE^1 &=& f\bfE, & \stext[r]{in} \Omega, \\
			\curl\bfE^1 \times \bfn &=& 0, & \stext[r]{on} \Gamma.
		\end{array}
		\right.
	\end{equation}

	\subsection{An integral equation}
	\label{sensitivity_lE}

	In order to state the integral equation for the tangential trace $\bfa = \bfE^1 \times \bfn$ on $\Gamma$, we introduce in the sequel suitable integral operators. Let $\Phi$ denote the fundamental solution in $\R^3$ of the Helmholtz equation with complex wavenumber $\xi$,
	\[-\left(\Delta\Phi + \xi^2\Phi\right) = \delta_0,\]
	satisfying the outgoing Sommerfeld condition as $|\bfx| \to \infty$. Function $\Phi(\bfx)$ is given by
	\[\Phi\colon \bfx \mapsto \frac{1}{4\pi}\frac{e^{i\xi|\bfx|}}{|\bfx|}, \ \bfx \neq 0.\]

	Define the space of continuous tangential fields on $\Gamma$,
	\[\mcT(\Gamma) = \set{\bfa \in \mcC^0(\Gamma)^3}{\bfa \cdot \bfn = 0}.\]
	For $\bfz \in \R^3 \setminus \Gamma$, the vector potential $\bfA(\bfz)$ with density $\bfa \in \mcT(\Gamma)$ is defined by
	\begin{equation}
		\label{eq:vecA}
		\bfA(\bfz) = \int_\Gamma \Phi(\bfx - \bfz)\bfa(\bfx) \dint\bfx.
	\end{equation}
	For the bounded domain $\Omega \subset \R^3$, we denote by $\bfA^-$ the restriction of $\bfA$ to $\Omega$. Similarly, $\bfA^+$ is the restriction of $\bfA$ to the exterior of $\Omega$, $\bfA^+ = \bfA_{\R^3 \setminus \overline{\Omega}}$. The following theorem from \cite{CK98} describes the behavior of $\bfA$ on the boundary $\Gamma$.

	\begin{theorem}
		\label{thm:vecA}
		Assume that $\Omega$ is a domain of class $\mcC^2$ and let $\bfa \in \mcT(\Gamma)$. Then, $\bfA$ is continuous across $\Gamma$, i.e.
		\begin{equation}
			\label{eq:A}
			\bfA(\bfz) = \int_\Gamma \Phi(\bfx - \bfz)\bfa(\bfx) \dint{s}(\bfx), \ \forall \bfz \in \R^3.
		\end{equation}
		Furthermore, the Neumann trace satisfies the jump condition
		\begin{equation}
			\label{eq:curlA}
			\Forall{\bfz \in \Gamma} \curl\bfA^{\pm}(\bfz) \times \bfn = \int_\Gamma \curl_z\left(\Phi(\bfx - \bfz)\bfa(\bfx)\right) \times \bfn_\bfz \dint{s}(\bfx) \mp \frac{1}{2}\bfa(\bfz),
		\end{equation}
		and the following relation holds true uniformly for all $\bfz \in \Gamma$:
		\begin{equation}
			\label{eq:curlcurlA}
			\lim_{h \to 0^+} \left(\curl\curl\bfA^+(\bfz + h\bfn_z) - \curl\curl\bfA^-(\bfz - h\bfn_z)\right) \times \bfn_\bfz = 0.
		\end{equation}
	\end{theorem}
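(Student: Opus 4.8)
The plan is to reduce the whole statement to the classical jump relations for the scalar acoustic single-layer potential of \cite{CK98}, applied componentwise. Writing $\bfA = (A_1,A_2,A_3)$, each Cartesian component
\[A_k(\bfz) = \int_\Gamma \Phi(\bfx-\bfz)\,a_k(\bfx)\,\dint{s}(\bfx)\]
is a single-layer potential for the Helmholtz operator with (complex) wavenumber $\xi$ and scalar density $a_k \in \mcC^0(\Gamma)$. Since $\imag\xi \geq 0$ and $\Phi$ has the same weak singularity $\mcO(|\bfx-\bfz|^{-1})$ as in the real case, the potential-theoretic estimates of \cite{CK98} carry over to complex $\xi$. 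The continuity \eqref{eq:A} then follows at once: the weakly singular integral converges for every $\bfz \in \R^3$ (including $\bfz \in \Gamma$) and the single-layer potential of a continuous density is continuous across $\Gamma$, so $\bfA \in \mcC^0(\R^3)^3$ and the boundary representation \eqref{eq:A} holds. The remaining work is to track how $\curl$, $\curl\curl$, and the tangential/normal decomposition on $\Gamma$ act on the scalar jump relations.

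For the Neumann-trace jump \eqref{eq:curlA}, I would first differentiate under the integral sign, which gives $\curl\bfA(\bfz) = \int_\Gamma \curl_\bfz(\Phi(\bfx-\bfz)\bfa(\bfx))\,\dint{s}(\bfx)$ away from $\Gamma$, and then invoke the gradient jump relation for the scalar single-layer potential: for each $k$,
\[\nabla A_k^\pm(\bfz) = \int_\Gamma \nabla_\bfz\Phi(\bfx-\bfz)\,a_k(\bfx)\,\dint{s}(\bfx) \mp \frac{1}{2}\,a_k(\bfz)\,\bfn(\bfz), \quad \bfz \in \Gamma,\]
where the tangential part of the integral is continuous across $\Gamma$ and only the normal component carries the $\mp\frac{1}{2}$ jump. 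Assembling the three relations into $\curl\bfA = \nabla\times\bfA$ converts the jump contribution into $\mp\frac{1}{2}\,\bfn\times\bfa$, so that $\curl\bfA^\pm(\bfz)\times\bfn$ equals the claimed integral minus $\pm\frac{1}{2}(\bfn\times\bfa)\times\bfn$. The tangentiality $\bfa\cdot\bfn = 0$ then gives $(\bfn\times\bfa)\times\bfn = \bfa$, yielding exactly \eqref{eq:curlA}.

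For the second-order relation \eqref{eq:curlcurlA}, I would use the identity $\curl\curl\bfA = \nabla(\div\bfA) - \Delta\bfA$ together with the fact that each component of $\bfA$ solves the homogeneous Helmholtz equation off $\Gamma$, so that $-\Delta\bfA = \xi^2\bfA$ and hence $\curl\curl\bfA = \nabla(\div\bfA) + \xi^2\bfA$ on either side of $\Gamma$. The term $\xi^2\bfA$ has no jump because $\bfA$ is continuous. The crucial observation is that $\div\bfA$ is also continuous across $\Gamma$: a surface integration by parts (valid since $\bfa$ is tangential and $\Gamma$ is closed) gives $\div\bfA(\bfz) = \int_\Gamma \Phi(\bfx-\bfz)\,\div_\Gamma\bfa(\bfx)\,\dint{s}(\bfx)$, again a scalar single-layer potential, hence continuous. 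Since both $\bfA$ and $\div\bfA$ have continuous traces on $\Gamma$, their tangential derivatives agree from the two sides, so the jumps of $\xi^2\bfA$ and of $\nabla(\div\bfA)$ are purely normal; crossing with $\bfn_\bfz$ then annihilates them and yields \eqref{eq:curlcurlA}.

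I expect this last step to be the main obstacle. The continuity of $\div\bfA$ relies on the surface integration by parts, which requires $\div_\Gamma\bfa$ to be meaningful; for $\bfa$ merely in $\mcT(\Gamma)$ one must either exploit the extra regularity available in our application (where $\bfa = \bfE^1\times\bfn$ with $\bfE^1 \in \PH{1}^3$, so $\div_\Gamma\bfa$ is well defined) or argue by density from smooth tangential fields. The second delicate point is purely bookkeeping: throughout the assembly one must keep the exterior/interior limits, the sign conventions in the scalar jump relations, and the orientation of $\bfn_\bfz$ mutually consistent, since a single sign error corrupts the $\mp\frac{1}{2}\bfa$ term of \eqref{eq:curlA}.
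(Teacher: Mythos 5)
First, a point of context: the paper does not prove this theorem at all — it is quoted directly from \cite{CK98} (it is Theorem~6.12 there), so the relevant benchmark is the classical Colton--Kress proof, which is indeed the componentwise reduction to scalar potential theory that you attempt, and your formal bookkeeping (the sign conventions, the identity $(\bfn\times\bfa)\times\bfn = \bfa$ for tangential $\bfa$, and the observation that the jump relations only depend on the Laplace-type singularity of $\Phi$, hence survive complex $\xi$) is all correct. The genuine gap is in your middle step: you invoke the \emph{full gradient} jump relation for the scalar single-layer potential with density $a_k \in \mcC^0(\Gamma)$. That relation is only available for H\"older-continuous densities. For merely continuous $a_k$, classical potential theory gives the continuity of the single-layer potential itself and the jump of the \emph{normal} derivative (in the sense of uniform convergence along normals), but the tangential part of $\nabla A_k$ need not admit one-sided limits on $\Gamma$ at all — so your assertion that ``the tangential part of the integral is continuous across $\Gamma$ and only the normal component carries the $\mp\frac{1}{2}$ jump'' is exactly the step that fails under the stated hypothesis $\bfa \in \mcT(\Gamma)$. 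The repair, as in \cite{CK98}, is either to prove \eqref{eq:curlA} first for $\mcC^{0,\alpha}$ densities and pass to continuous ones by approximation combined with sup-norm bounds on the relevant operators that are uniform in the normal coordinate, or to exploit the tangentiality of $\bfa$ \emph{before} taking boundary limits so that the strongly singular tangential contributions cancel; a naive componentwise limit does not see this cancellation.

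A similar issue affects your treatment of \eqref{eq:curlcurlA}, and here your own diagnosis is only half right. Your route via $\curl\curl\bfA = \nabla(\div\bfA) + \xi^2\bfA$ and the surface integration by parts $\div\bfA(\bfz) = \int_\Gamma \Phi(\bfx-\bfz)\,\div_\Gamma\bfa(\bfx)\,\dint{s}(\bfx)$ is sound, but it proves the statement only for $\bfa \in \mcT_d(\Gamma)$, whereas the theorem claims it uniformly for every $\bfa \in \mcT(\Gamma)$; your proposed fallback of ``arguing by density from smooth tangential fields'' does not close this, because without a uniform (in $h$) sup-norm bound on $\curl\curl\bfA(\cdot \pm h\bfn)\times\bfn$ you cannot interchange the two limits. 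The idea you are missing is that \eqref{eq:curlcurlA} is deliberately formulated as a \emph{symmetric difference} at the mirror points $\bfz + h\bfn$ and $\bfz - h\bfn$: estimating the kernel difference at these two points directly, the strongly singular parts cancel and the limit $0$ holds uniformly for merely continuous tangential densities, even though neither one-sided trace exists separately. That said, the weaker version your argument does establish (with $\bfa \in \mcT_d(\Gamma)$) would in fact suffice for the only use the paper makes of \eqref{eq:curlcurlA}, namely in the injectivity argument of \autoref{thm:EI-bij}, where the density lies in $\mcT_d(\Gamma)$ by hypothesis — but it is not the theorem as stated.
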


	We next introduce the magnetic dipole operator $\mcM$ which is defined for $\bfa \in \mcT(\Gamma)$ by
	\begin{equation}
		\label{eq:N1}
		\mcM\bfa(\bfz) = 2\int_\Gamma \curl_\bfz \left(\Phi(\bfx - \bfz) \bfa(\bfx)\right) \times \bfn_\bfz \dint{s}(\bfx), \; \forall \bfz \in \Gamma.
	\end{equation}

	Finally, we introduce the fundamental solution of the Maxwell equations that can be derived from $\Phi$ in the following way,
	\begin{equation}
		\label{eq:G}
		G\colon \bfx \mapsto - \Phi(\bfx)\I + \frac{1}{\xi^2}D^2\Phi(\bfx).
	\end{equation}
	Here, $\I \in \mcM_3(\R)$ is the identity matrix, and $D^2\Phi(\bfx)$ denotes the Hessian of $\Phi$ at the point $\bfx$. $G$ can be shown to solve the following equation in $\R^3$,
	\begin{equation}
		\label{eq:Maxwell_delta}
		\curl\curl G - \xi^2G = -\delta_0 \I,
	\end{equation}
	where the curl of the matrix valued function $G$ has to be understood column wise \cite{Ned00}. Since $\Phi$ satisfies the outgoing radiation condition, $G$ satisfies the following Silver-Müller condition \cite{CK98}
	\[
		\lim_{|\bfx| \to \infty} |\bfx|\left(\curl G \times \dfrac{\bfx}{|\bfx|} - i\xi G\right) = 0.
	\]
	Then, we are able to state the following theorem.

	\begin{theorem}
		\label{thm:EI}
		Let $\bfE^1$ be the solution of \eqref{eq:sens_k_const} for $\xi$ given by \eqref{eq:k_const} with $\varepsilon > 0$ and $\sigma > 0$. For $\bfz \in \Gamma$, define $\bfT(\bfz)$ by
		\begin{equation}
			\label{eq:def_T}
			\bfT(\bfz) = -2\left(\int_\Omega G(\bfx -\bfz)f(\bfx)\bfE(\bfx) \dint\bfx\right) \times \bfn.
		\end{equation}
		Under the regularity assumptions of \autoref{thm:sens-reg2}, the tangential trace $\bfa = \bfE^1 \times \bfn$ is solution of the following integral equation on $\Gamma$,
		\begin{equation}
			\label{eq:EI}
			(\mcI - \mcM)\bfa = \bfT,
		\end{equation}
		where $\mcI$ denotes the identity operator.
	\end{theorem}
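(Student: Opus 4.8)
The plan is to establish a Stratton--Chu-type representation formula for $\bfE^1$ inside $\Omega$ by means of the dyadic fundamental solution $G$, and then to recover \eqref{eq:EI} by taking the tangential trace and invoking the jump relations of \autoref{thm:vecA}. Fixing $\bfz\in\Omega$, I would apply the second vector Green identity
\[
\int_\Omega\left(\curl\curl\bfu\cdot\bfw - \bfu\cdot\curl\curl\bfw\right)\dint\bfx = \int_\Gamma\left(\curl\bfu\times\bfw - \curl\bfw\times\bfu\right)\cdot\bfn\dint{s}(\bfx)
\]
with $\bfu = \bfE^1$ and $\bfw = G(\cdot-\bfz)\bfv$, where $\bfv$ ranges over a basis of constant vectors so as to treat $G$ columnwise. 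Since $G$ is singular at $\bfx = \bfz$, the identity has to be used on $\Omega\setminus\overline{B_\eta(\bfz)}$ with $\eta\to 0^+$; the sphere $\partial B_\eta(\bfz)$ then contributes the point value $\bfE^1(\bfz)$ in the standard way. Inserting $\curl\curl\bfE^1 = \xi^2\bfE^1 + f\bfE$ from \eqref{eq:sens_k_const} and $\curl\curl G - \xi^2 G = -\delta_0\I$ from \eqref{eq:Maxwell_delta}, the $\xi^2$-contributions cancel and I am left with a volume term carrying the source $f\bfE$ together with a boundary term.

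The homogeneous Neumann condition $\curl\bfE^1\times\bfn = 0$ makes the boundary term containing $\curl\bfE^1$ vanish, so only the contribution carrying $\bfa = \bfE^1\times\bfn$ remains. The decisive simplification is that, for a constant vector $\bfv$, the Hessian part $\frac{1}{\xi^2}D^2\Phi\,\bfv$ of $G\bfv$ is a gradient and hence $\curl_\bfx\left(G(\bfx-\bfz)\bfv\right) = -\nabla_\bfx\Phi(\bfx-\bfz)\times\bfv$. Using $\nabla_\bfx\Phi = -\nabla_\bfz\Phi$, the definition \eqref{eq:vecA} of the vector potential, and the symmetry of $G$ to reassemble the volume integral in matrix form, I expect to arrive at the representation formula
\[
\bfE^1(\bfz) = -\int_\Omega G(\bfx-\bfz)f(\bfx)\bfE(\bfx)\dint\bfx + \curl\bfA^-(\bfz), \qquad \bfz\in\Omega.
\]

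Next I would form $\bfE^1(\bfz)\times\bfn$ and let $\bfz\to\Gamma$ from within $\Omega$, so that the left-hand side converges to $\bfa$. As $\supp(f)\subset B$ lies at distance at least $\beta$ from $\Gamma$, the kernel $G(\bfx-\bfz)$ remains smooth for $\bfx\in B$ and $\bfz$ close to $\Gamma$, so the volume term is continuous up to the boundary and its tangential trace equals $\frac{1}{2}\bfT$ by \eqref{eq:def_T}. For the boundary term, the interior version of the jump relation \eqref{eq:curlA} gives $\curl\bfA^-\times\bfn = \int_\Gamma\curl_\bfz\left(\Phi\bfa\right)\times\bfn_\bfz\dint{s} + \frac{1}{2}\bfa$, and the remaining integral is $\frac{1}{2}\mcM\bfa$ by \eqref{eq:N1}. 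Collecting everything yields $\bfa = \frac{1}{2}\bfT + \frac{1}{2}\mcM\bfa + \frac{1}{2}\bfa$; multiplying by $2$ and rearranging produces exactly $(\mcI-\mcM)\bfa = \bfT$, the factors $2$ in $\bfT$ and $\mcM$ being precisely what absorbs the $\frac{1}{2}$ from the jump.

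Two points need attention. To apply \autoref{thm:vecA} I must ensure $\bfa\in\mcT(\Gamma)$, i.e. that $\bfa$ is a continuous tangential field; this is where the regularity $\bfE^1\in H^2(\Omega)^3$ from \autoref{thm:sens-reg2} is used, since its trace then lies in $H^{3/2}(\Gamma)\hookrightarrow\mcC^0(\Gamma)$. The genuinely delicate part, and the main obstacle, is the rigorous derivation of the representation formula: controlling the singularity of the dyadic $G$ through the excision on $B_\eta(\bfz)$, identifying the limit of the contribution of $\partial B_\eta(\bfz)$, and then passing to the boundary with the correct signs. The careful bookkeeping of the $\mp\frac{1}{2}$ terms in \eqref{eq:curlA} is exactly what decides whether one obtains the invertible operator $\mcI-\mcM$ or the wrong combination $\mcI+\mcM$.
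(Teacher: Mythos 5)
Your proposal is correct and follows essentially the same route as the paper: both establish the interior representation formula \eqref{eq:EI-2} --- you classically, by excising $B_\eta(\bfz)$ and applying the vector Green identity, the paper distributionally, by pairing \eqref{eq:Maxwell_delta} with the continuous field $\bfE^1$ and integrating by parts twice via \eqref{eq:EI_green} --- and both then exploit $\curl(D^2\Phi)=0$, the interior jump relation \eqref{eq:curlA}, and the $H^2(\Omega)^3$ regularity from \autoref{thm:sens-reg2} to pass to the tangential trace and obtain $(\mcI-\mcM)\bfa=\bfT$ with the correct sign bookkeeping. The only slight overstatement is asserting the representation for every $\bfz\in\Omega$ (for $\bfz\in B$ the Hessian part of $G$ behaves like $|\bfx-\bfz|^{-3}$ and the volume integral is not absolutely convergent), but since you invoke it only for $\bfz$ near $\Gamma$, where $\supp(f)\subset B$ stays at distance $\beta$, this is immaterial --- the paper likewise restricts to $\bfz\in\mcV_\beta(\Gamma)$.
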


	The proof of Theorem \ref{thm:EI} is adapted from \cite{AVV01} where an asymptotic expansion of the perturbed field is obtained in terms of the (small) radius of the perturbation. Notice that in the present study of sensitivity, the analysis is simplified and no asymptotic parameter occurs. In other words, we do not assume that the perturbation is small in size, but only in amplitude in order to connect the derivative to the perturbed field \eqref{eq:taylor}.

	\begin{proof}
		Let $\bfz \in \mcV_\beta(\Gamma)$. According to \autoref{thm:sens-reg2}, the solution of the sensitivity equation \eqref{eq:sensitivity} belongs to $H^2(\Omega)^3 \hookrightarrow \mcC^0(\overline{\Omega})^3$, and thus the duality product $\duality{\delta_0\I}{\bfE^1}{} = \bfE^1(\bfz)$ is well defined. From \eqref{eq:Maxwell_delta}, we get
		\begin{equation}
			\label{eq:EI-1}
			-\bfE^1(\bfz) = \int_\Omega \curl_\bfx\curl_\bfx G(\bfx -\bfz)\bfE^1(\bfx) \dint\bfx - \xi^2\int_\Omega G(\bfx - \bfz)\bfE^1(\bfx) \dint\bfx,
		\end{equation}
		where the integrals have been to understood as duality products in $H^s(\Omega)$ for appropriate values of $s$.

		The following partial integration formula holds true for matrix valued functions $A\colon \Omega \to \C^{3 \times 3}$ and $B\colon \Omega \to \C^{3 \times p}$, $p \in \N^*$,
		\begin{equation}
			\label{eq:EI_green}
			\int_\Omega \transpose{A} \curl B - \int_\Omega \transpose{(\curl A)}B = \int_\Gamma \transpose{(A \times \bfn)}B,
		\end{equation}
		where the vector product $A \times \bfn$ is taken column wise. Applying \eqref{eq:EI_green} twice to the first term on the right hand side of \eqref{eq:EI-1} yields
		\[
			\bfE^1(\bfz) = \int_\Gamma \transpose{(\curl_\bfx G(\bfx -\bfz) \times \bfn)}\bfE^1(\bfx) \dint{s}(\bfx) - \int_\Omega G(\bfx -\bfz)f(\bfx)\bfE(\bfx) \dint\bfx
		\]
		taking into account the symmetry of either $G$ and $\curl\curl G$ as well as the strong formulation of the sensitivity equation \eqref{eq:sens_k_const}. The remaining boundary integral on the right hand side can be written in terms of the tangential trace of $\bfE^1$ taking into account that $\curl(D^2\Phi) = 0$ in the definition of $G$. Finally, the following identity holds true for any $\bfz$ in the neighborhood $\mcV_\beta(\Gamma)$ of the boundary,
		\begin{equation}
			\label{eq:EI-2}
			\bfE^1(\bfz) - \int_\Gamma \curl_\bfz(\Phi(\bfx -\bfz)(\bfE^1(\bfx) \times \bfn)) \dint{s}(\bfx) = -\int_\Omega G(\bfx - \bfz)f(\bfx)\bfE(\bfx) \dint\bfx.
		\end{equation}
		Notice that the volume integral on $\Omega$ is well defined since $\supp(f)\subset B$ and $\bfz \in \mcV_\beta(\Gamma)$. Hence, $\bfx \neq \bfz$ for any $\bfx \in B$ and $G$ is regular on the integration domain.

		Now, for $\bfz \in \mcV_\beta(\Gamma)$, let $\bfz_\Gamma$ denote the projection of $\bfz$ onto $\Gamma$. Notice that $\bfz_\Gamma$ is well defined since $\Gamma$ is regular and $\mcV_\beta(\Gamma)$ can be assumed to be sufficiently small. Taking the vector product in the identity \eqref{eq:EI-2} with the vector $\bfn_\bfz = \bfn_{\bfz_\Gamma}$ and passing to the limit as $\bfz \to \bfz_\Gamma \in \Gamma$ yields the integral equation \eqref{eq:EI} on $\Gamma$ according to \eqref{eq:curlA}.
	\end{proof}

	In order to obtain an estimate of $\bfE^1\times\bfn$, we analyze the operator involved in the integral equation~\eqref{eq:EI}. We introduce the following normed spaces of tangential fields with surface divergence
	\begin{align*}
		\mcT_d(\Gamma) &= \set*{\bfa \in \mcT(\Gamma)}{\div_\Gamma\bfa \in \mcC^0(\Gamma)} \\
		\shortintertext{and}
		\mcT^{0,\alpha}_d(\Gamma) &= \set*{\bfa\in \mcT(\Gamma)}{\bfa \in \mcC^{0,\alpha}(\Gamma); \div_\Gamma\bfa \in \mcC^{0,\alpha}(\Gamma)}
	\end{align*}
	equipped with the respective graph norms.

	\begin{theorem}
		\label{thm:EI-bij}
		Under the assumptions of \autoref{thm:EI}, the operator $\mcI - \mcM$ is bijective on the space $\mcT_d(\Gamma)$ and has a bounded inverse.
	\end{theorem}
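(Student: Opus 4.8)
The plan is to establish bijectivity through the Riesz--Fredholm theory together with a uniqueness argument for the interior and exterior Maxwell problems. First I would prove that the magnetic dipole operator $\mcM$ is compact on $\mcT_d(\Gamma)$. Consequently $\mcI - \mcM$ is a Fredholm operator of index zero, so that it suffices to establish injectivity; bijectivity and the boundedness of the inverse then follow from the open mapping theorem.

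For the compactness, I would examine the kernel of $\mcM$. Writing $\curl_\bfz(\Phi(\bfx-\bfz)\bfa(\bfx)) = \nabla_\bfz\Phi(\bfx-\bfz) \times \bfa(\bfx)$ and expanding the double cross product with $\bfn_\bfz$, the kernel splits into a term carrying the normal derivative $\partial_{n_\bfz}\Phi(\bfx-\bfz)$, which is weakly singular on a $\mcC^2$ surface, and a term proportional to $\bfa(\bfx)\cdot\bfn_\bfz$ multiplying $\nabla_\bfz\Phi$. Since $\bfa$ is tangential and $\Gamma$ is smooth, $\bfa(\bfx)\cdot\bfn_\bfz = \mcO(|\bfx-\bfz|)$, which tames the stronger singularity of $\nabla_\bfz\Phi$ to a weakly singular one. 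Hence $\mcM$ maps $\mcT(\Gamma)$ compactly into itself. To control the surface divergence I would use an integration-by-parts identity on $\Gamma$ expressing $\div_\Gamma(\mcM\bfa)$ through weakly singular operators acting on $\bfa$ and on $\div_\Gamma\bfa$, which yields compactness of $\mcM$ on $\mcT_d(\Gamma)$ (see \cite{CK98}).

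The heart of the argument is injectivity. Suppose $\bfa \in \mcT_d(\Gamma)$ satisfies $(\mcI - \mcM)\bfa = 0$ and let $\bfA$ be the vector potential \eqref{eq:vecA} with density $\bfa$. By \autoref{thm:vecA}, $\bfA$ is continuous across $\Gamma$ and solves $\curl\curl\bfA - \xi^2\bfA = 0$ both in $\Omega$ and in $\R^3 \setminus \overline{\Omega}$, with the Silver-Müller condition at infinity. The jump relation \eqref{eq:curlA} gives $\curl\bfA^+ \times \bfn = -\frac{1}{2}(\mcI - \mcM)\bfa = 0$ on $\Gamma$; thus $\bfA^+$ solves the exterior Maxwell problem with vanishing tangential curl trace, and the standard radiation-condition uniqueness forces $\bfA^+ = 0$. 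The continuity of $\bfA$ across $\Gamma$ then yields $\bfA^- \times \bfn = \bfA^+ \times \bfn = 0$, so $\bfA^-$ solves the interior problem with homogeneous tangential trace. Because $\sigma > 0$ we have $\imag{\xi^2} = \omega\mu_0\sigma > 0$, and the energy identity obtained by testing the equation with $\overline{\bfA^-}$ shows, upon taking imaginary parts, that $\bfA^- = 0$ in $\Omega$. Finally, the jump of the Neumann trace across $\Gamma$ equals $\bfa$ by \eqref{eq:curlA}, and since both $\bfA^\pm$ vanish we conclude $\bfa = 0$.

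With injectivity in hand, the Fredholm alternative gives surjectivity, hence bijectivity, and the inverse is bounded by the open mapping theorem. I expect the main obstacle to lie in the compactness step on the divergence space $\mcT_d(\Gamma)$: the principal part of the magnetic dipole operator is not compact by itself, and it is only the combination of the tangentiality of the density with the integration-by-parts structure of $\div_\Gamma$ that regularizes the kernel. The uniqueness step, by contrast, is conceptually the crux but follows a well-established pattern once the correct interior and exterior boundary value problems have been identified.
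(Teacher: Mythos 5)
Your overall architecture — compactness of $\mcM$ on $\mcT_d(\Gamma)$, injectivity via potential theory, then the Fredholm alternative and the open mapping theorem — is exactly the paper's (which handles compactness simply by citing \cite[Theorems 6.15 and 6.16]{CK98}: $\mcM$ maps $\mcT_d(\Gamma)$ continuously into $\mcT_d^{0,\alpha}(\Gamma)$, which embeds compactly into $\mcT_d(\Gamma)$; your kernel-level sketch paraphrases the proof of those results). But your injectivity argument is carried out with the wrong field, and this breaks both uniqueness steps. You assert that the vector potential $\bfA$ solves $\curl\curl\bfA - \xi^2\bfA = 0$ off $\Gamma$. It does not: each Cartesian component of $\bfA$ satisfies the Helmholtz equation, so $\curl\curl\bfA - \xi^2\bfA = \nabla(\div\bfA)$, and $\div\bfA$ is (up to sign) the scalar single-layer potential with density $\div_\Gamma\bfa$ — nonzero unless $\div_\Gamma\bfa = 0$, which you cannot assume for a general $\bfa \in \mcT_d(\Gamma)$. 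Consequently: (i) the exterior Maxwell uniqueness theorem you invoke does not apply to $\bfA^+$, and for a mere vector Helmholtz field the data $\curl\bfA^+ \times \bfn = 0$ together with a radiation condition do not force $\bfA^+ = 0$ (gradients of radiating scalar Helmholtz solutions are curl-free, satisfy the vector Helmholtz equation and decay, yet are nonzero); (ii) your interior energy identity — testing with $\overline{\bfA^-}$ and taking imaginary parts — relies on the Maxwell structure: for the vector Helmholtz equation the boundary term $\int_\Gamma \partial_n\bfA \cdot \overline{\bfA}$ does not vanish when only the tangential trace $\bfA^- \times \bfn$ is zero, so the sign argument via $\imag{\xi^2} = \omega\mu_0\sigma > 0$ does not close.

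The repair is precisely the paper's choice of field: set $\bfE_\bfa = \curl\bfA$, which genuinely satisfies $\curl\curl\bfE_\bfa - \xi^2\bfE_\bfa = 0$ on both sides of $\Gamma$ (since $\curl\curl\curl\bfA = -\Delta\curl\bfA = \xi^2\curl\bfA$). The jump relation \eqref{eq:curlA} turns the hypothesis $(\mcI - \mcM)\bfa = 0$ into vanishing exterior \emph{Dirichlet} data $\bfE_\bfa^+ \times \bfn = \frac{1}{2}(\mcM - \mcI)\bfa = 0$, so exterior uniqueness gives $\bfE_\bfa^+ \equiv 0$. The information is then transferred inside not through the continuity of $\bfA$ across $\Gamma$ (which you use, but which is the wrong device here) but through \eqref{eq:curlcurlA}, the continuity of the tangential trace of $\curl\curl\bfA$: it yields $\curl\bfE_\bfa^- \times \bfn = 0$, so $\bfE_\bfa^-$ solves the interior Maxwell problem with homogeneous Neumann data, and \autoref{thm:sol_maxwell} — this is where $\varepsilon > 0$, $\sigma > 0$ enter — gives $\bfE_\bfa^- \equiv 0$. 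Finally, the other jump relation and $\mcM\bfa = \bfa$ give $\bfa = \frac{1}{2}(\mcM + \mcI)\bfa = \bfE_\bfa^- \times \bfn = 0$. Your closing Fredholm/open-mapping step is correct; the gap is solely the misidentification of the PDE satisfied by $\bfA$, which invalidates the two uniqueness invocations as written.
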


	\begin{proof}
		According to \cite[Theorems 6.15 and 6.16]{CK98}, we can state that the operator $\mcM\colon \mcT_d(\Gamma) \to \mcT_d^{0,\alpha}(\Gamma)$ is continuous, whereas $\mcT_d^{0,\alpha}(\Gamma)$ is compactly embedded in $\mcT_d(\Gamma)$. Hence, $\mcM$ is a compact operator on $\mcT_d(\Gamma)$.

		We prove that $\mcI - \mcM$ is injective. To this end, let $\bfa \in \mcT_d(\Gamma)$ and define the vector field $\bfE_\bfa$ for $\bfz \in \R^3 \setminus \Gamma$ by
		\[\bfE_\bfa(\bfz) = \curl\int_\Gamma\Phi(\bfx - \bfz)\bfa(\bfx) \dint{s}(\bfx) = \curl\bfA(\bfz).\]
		Here $\bfA$ is the vector potential with density $\bfa$ introduced in \eqref{eq:A}. We have
		\[\curl\curl\bfE_\bfa^\pm - \xi^2\bfE_\bfa^\pm = 0 \stext{in} \Omega^\pm\]
		as well as
		\begin{equation}
			\label{eq:EI-3}
			\bfE_\bfa^\pm \times \bfn = \curl\bfA^\pm \times \bfn = \frac{1}{2}\left(\mcM \mp \mcI \right)\bfa,\ \forall \bfz \in \Gamma.
		\end{equation}
		Now, let $\bfa \in \mcT_d(\Gamma)$ such that $\left(\mcI - \mcM\right)\bfa = 0$. On the exterior domain $\Omega^+ = \R^3 \setminus \overline{\Omega}$, $\bfE_\bfa^+$ is solution of the exterior Maxwell problem with homogeneous Dirichlet boundary condition. In addition, $\bfE^+$ satisfies the outgoing radiation condition at infinity as does the fundamental solution $\Phi$. Consequently, $\bfE_\bfa^+ \equiv 0$ on $\Omega^+$ due to the uniqueness of the solution to the exterior Maxwell problem.

		Next, applying identity \eqref{eq:curlcurlA} to $\curl\bfE_\bfa = \curl\curl\bfA$, we get
		\[\lim_{h \to 0^+} \left(\curl\bfE_\bfa^+(\bfz + h\bfn_\bfz) - \curl\bfE_\bfa^-(\bfz - h\bfn_\bfz)\right) \times \bfn_\bfz = 0.\]
		But $\bfE_\bfa^+$ vanishes on $\Omega^+$ and therefore,
		\[\curl\bfE_\bfa^- \times \bfn = 0 \stext{on} \Gamma.\]
		The field $\bfE_\bfa^-$ is thus solution of the interior Maxwell problem with homogeneous Neumann boundary condition.

		According to the properties of the constant parameters $\varepsilon > 0$ and $\sigma > 0$ in the definition of the wave number $\xi$, the only solution to the interior Maxwell problem is $\bfE_\bfa^- \equiv 0$ (see \autoref{thm:sol_maxwell}).

		From the homogeneous integral equation $(\mcI - \mcM)\bfa = 0$, we deduce $\mcM\bfa = \bfa$. Together with the identity \eqref{eq:EI-3}, this yields
		\[\bfE_\bfa^- \times \bfn = \frac{1}{2}(\mcM + \mcI)\bfa = \bfa\]
		and thus $\bfa = 0$ on $\Gamma$. This proves that the operator $\mcI-\mcM$ is injective. Since $\mcM$ has been shown to be compact on $\mcT_d(\Gamma)$, we deduce from the Fredholm alternative that $\mcI - \mcM$ is bijective. Finally, $\mcI - \mcM$ has a bounded inverse according to the inverse (or open) mapping theorem.
	\end{proof}

	Theorems~\ref{thm:EI} and~\ref{thm:EI-bij} imply that the tangential trace $\bfa = \bfE^1 \times \bfn$, solution to the integral equation
	\[(\mcI - \mcM)\bfa = \bfT,\]
	can be estimated by
	\begin{equation}
		\label{estimation_sensitivity}
		\norm{\bfa}{\mcT_d(\Gamma)} \leq C_\mcM\norm{\bfT}{\mcT_d(\Gamma)}.
	\end{equation}

	\begin{remark}
		The norm of the inverse operator in \eqref{estimation_sensitivity} actually depends on the wavenumber $\xi$. This may be seen from a thorough analysis of the eigenvalues of the operator $\mcM$ when $\Omega$ is a sphere. In this case, an exact analytical expression of the eigenvalues can be obtained in function of Ricatti-Bessel and Ricatti-Hankel functions~\cite{HsiaoKleimann}. This study allows us to numerically observe the behavior of the spectrum of the operator $(\mcI - \mcM)$ and its inverse with respect to the wavenumber $\xi$. The operator $(\mcI - \mcM)$ is called the MFIE (Magnetic Field Integral Equation) operator~\cite{Ned00}.
	\end{remark}

	\subsection{Estimates for \texorpdfstring{$\bfT$}{T}}
	\label{propertyT}

	In this section, we will prove some estimates of the functional $\bfT$ on the boundary that are at the origin of the localization algorithm described in Section~\ref{sec:algo}. The proof is mainly based on the following estimates of the fundamental solution $G$.

	\begin{lemma}
		\label{lemma:estimate_G}
		Let the wavenumber $\xi \in \C$ be defined as in \eqref{eq:k_const} and let $G$ be the associated fundamental solution of the Maxwell equations as in \eqref{eq:G}. There is a polynomial $p$ of degree 3 with positive coefficients depending on $\abs{\xi}$ satisfying $p(0) = 0$ such that
		\begin{equation}
			\label{eq:estimate_G}
			\abs{G_{j\ell}(\bfx)} \leq p\left(\frac{1}{\abs{\bfx}}\right)\ \forall 1 \leq j, \ell \leq 3, \forall \bfx \neq 0.
		\end{equation}
		Similarly, there is a polynomial $q \in \mathbb{P}_4(\R)$ of degree 4 with positive coefficients depending on $\abs{\xi}$ satisfying $q(0) = 0$, such that
		\begin{equation}
			\label{eq:estimate_dmG}
			\abs{\partial_m G_{j\ell}(\bfx)} \leq q\left(\frac{1}{|\bfx|}\right)\ \forall 1 \leq j, \ell, m \leq 3.
		\end{equation}
	\end{lemma}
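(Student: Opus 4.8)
The plan is to reduce the whole statement to pointwise bounds on the Cartesian partial derivatives of the scalar fundamental solution $\Phi$. Indeed, from the definition \eqref{eq:G} we have $G_{j\ell} = -\Phi\,\delta_{j\ell} + \xi^{-2}\partial_j\partial_\ell\Phi$, so that a bound on $\abs{G_{j\ell}}$ follows from bounds on $\abs{\Phi}$ and on the second derivatives $\abs{\partial_j\partial_\ell\Phi}$, while a bound on $\abs{\partial_m G_{j\ell}}$ follows from bounds on $\abs{\partial_m\Phi}$ and on the third derivatives $\abs{\partial_m\partial_j\partial_\ell\Phi}$. It therefore suffices to estimate $\partial^\gamma\Phi$ for every multi-index $\gamma$ with $\abs{\gamma}\le 3$.

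The key step is a structural claim, which I would establish by induction on $n=\abs{\gamma}$: writing $r=\abs{\bfx}$, the derivative $\partial^\gamma\Phi$ equals $e^{i\xi r}$ times a finite sum of terms of the form $c\,\xi^{a}\,\frac{x_{i_1}\cdots x_{i_s}}{r^{s}}\,\frac{1}{r^{k}}$, where $c$ is a real constant, $0\le a\le n$, and $1\le k\le n+1$. Since $\Phi = \tfrac{1}{4\pi}e^{i\xi r}/r$ has exactly this form for $n=0$ (with $k=1$), the induction amounts to differentiating one such term: the derivative $\partial_m$ hits either the exponential (producing a factor $i\xi\,x_m/r$), a radial power $r^{-k}$ (producing $-k\,x_m/r^{k+2}$), or an angular factor $x_i/r$ (producing $\delta_{im}/r - x_i x_m/r^{3}$). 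In each case the total power of $1/r$ increases by exactly one and the power of $\xi$ increases by at most one, which closes the induction.

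With this normal form in hand, the bounds are immediate. Because $\varepsilon>0$ and $\sigma>0$, the wavenumber $\xi$ defined in \eqref{eq:k_const} lies in the open upper half-plane, so $\imag{\xi}\ge 0$ and $\abs{e^{i\xi r}} = e^{-\imag{\xi}\,r}\le 1$; moreover each angular ratio satisfies $\abs{x_i/r}\le 1$. Hence every term in the normal form of $\partial^\gamma\Phi$ is bounded in modulus by $\abs{c}\,\abs{\xi}^{a}\,r^{-k}$ with $1\le k\le n+1$. Summing the absolute values of the finitely many terms produces a polynomial in $1/r$ with positive coefficients depending only on $\abs{\xi}$ and with no constant term (since $k\ge1$ throughout). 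Taking $n=2$ for the Hessian contribution to $G_{j\ell}$ gives the degree-$3$ polynomial $p$ of \eqref{eq:estimate_G}, and taking $n=3$ for the third derivatives in $\partial_m G_{j\ell}$ gives the degree-$4$ polynomial $q$ of \eqref{eq:estimate_dmG}; the lower-order contributions from $-\Phi\,\delta_{j\ell}$ and $\partial_m\Phi$ only affect the lower-degree coefficients.

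The only delicate point is the bookkeeping in the inductive step: one must check that the maximal power of $1/r$ after $n$ differentiations is exactly $n+1$ (not larger) and that all resulting coefficients can be taken positive after passing to absolute values. Both facts follow transparently from the case analysis above, so no genuine obstacle arises; the argument is essentially a controlled Leibniz computation for a radial function.
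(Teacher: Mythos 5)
Your proof is correct, but it takes a genuinely different route from the paper's. The paper proceeds by brute force: it writes out $G_{j\ell}$ in closed form, computing the Hessian of $\Phi$ explicitly to get
$G_{j\ell}(\bfx) = \frac{e^{i\xi|\bfx|}}{4\pi}\left(\left(-\frac{1}{|\bfx|}+\frac{i}{\xi|\bfx|^2} - \frac{1}{\xi^2|\bfx|^3}\right)\delta_{j\ell} - \left( \frac{1}{|\bfx|}+\frac{3i}{\xi|\bfx|^2} - \frac{3}{\xi^2|\bfx|^3}\right)\frac{x_jx_\ell}{|\bfx|^2}\right)$,
then applies the triangle inequality with $|e^{i\xi|\bfx|}|\le 1$, obtaining the explicit bound $\frac{1}{4\pi}\bigl(\frac{2}{|\bfx|} + \frac{4}{|\xi||\bfx|^2} + \frac{4}{|\xi|^2|\bfx|^3}\bigr)$; the derivative estimate \eqref{eq:estimate_dmG} is then merely asserted to ``follow in the same way,'' i.e., by an (unwritten) explicit computation of the third derivatives of $\Phi$. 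You replace the closed-form computation by an inductive normal form for $\partial^\gamma\Phi$ of arbitrary order, with the invariant $0\le a\le n$, $1\le k\le n+1$. This buys uniformity: the order-$3$ case needed for \eqref{eq:estimate_dmG} comes for free, so your argument makes rigorous precisely the step the paper leaves implicit, and it extends to all orders of differentiation; what it gives up is the paper's explicit constants, which can matter when tracking the $\xi$-dependence quantitatively. Both arguments ultimately rest on the same two facts — $|e^{i\xi r}|\le 1$ because $\varepsilon,\sigma>0$ forces $\xi$ into the upper half-plane, and each differentiation costs at most one power of $1/r$ or one power of $\xi$.

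Two cosmetic slips in your write-up, neither of which affects validity. First, once a derivative hits the exponential, the resulting coefficient picks up a factor $i$, so the constants $c$ in your normal form cannot all be taken real; allow $c\in\C$, which is harmless since only $|c|$ enters the final bound. Second, your slogan that ``the total power of $1/r$ increases by exactly one in each case'' is not literally true in the exponential case, where $k$ is unchanged and the power $a$ of $\xi$ increases instead; but the invariant you actually stated ($0\le a\le n$, $1\le k\le n+1$) is exactly what your case analysis preserves, so the induction closes as claimed.
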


	\begin{proof}
		Let $1 \leq j,\ell \leq 3$ and $\bfx \neq 0$. We have
		\[G_{j\ell}(\bfx) = -\frac{1}{4\pi}\frac{e^{i\xi|\bfx|}}{|\bfx|}\delta_{j\ell} + \frac{1}{\xi^2}\partial_j\partial_\ell\Phi(\bfx).\]
		A straightforward computation of the derivatives yields
		\[G_{j\ell}(\bfx) = \frac{e^{i\xi|\bfx|}}{4\pi}\left(\left(-\frac{1}{|\bfx|}+\frac{i}{\xi|\bfx|^2} - \frac{1}{\xi^2|\bfx|^3}\right)\delta_{j\ell} - \left( \frac{1}{|\bfx|}+\frac{3i}{\xi|\bfx|^2} - \frac{3}{\xi^2|\bfx|^3}\right)\frac{x_jx_\ell}{|\bfx|^2}\right)\]
		which can be estimated by
		\[\abs{G_{j\ell}(\bfx)} \leq \frac{1}{4\pi}\left(\frac{2}{|\bfx|} + \frac{4}{|\xi| |\bfx|^2} + \frac{4}{|\xi|^2|\bfx|^3}\right).\]
		This yields \eqref{eq:estimate_G} where the coefficients of the polynomial $p$ depend on the wavenumber $\xi$. Estimate \eqref{eq:estimate_dmG} follows in the same way.
	\end{proof}

	\begin{theorem}
		\label{theorem:ineq_T}
		Let $B = B_\alpha(\bfx_0)$ be the sphere of radius $\alpha > 0$ and center $\bfx_0 \in \Omega$ and assume that $\dist(B,\Gamma) \geq \beta > 0$. Then there is a constant $C = C(\beta, f, \bfE) > 0$ such that for any $\bfz \in \Gamma$,
		\begin{align}
			\label{eq:ineq_T}
			\abs{\bfT(\bfz)} &\leq Cp\left(\frac{1}{\abs{\bfx_0 - \bfz}}\right)\vol(B) \\
			\shortintertext{and}
			\label{eq:ineq_divT}
			\abs{\div_{\Gamma}\bfT(\bfz)} &\leq Cq\left(\frac{1}{\abs{\bfx_0-\bfz}}\right)\vol(B),
		\end{align}
		where $p$ and $q$ are the polynomials from \autoref{lemma:estimate_G}.
	\end{theorem}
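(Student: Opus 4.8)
The plan is to estimate $\bfT$ directly from its integral representation \eqref{eq:def_T}, exploiting that $\supp(f)\subset B$ reduces the domain of integration to the ball $B$, on which the kernel $G(\bfx-\bfz)$ is smooth: since $\dist(B,\Gamma)\geq\beta$, we have $\abs{\bfx-\bfz}\geq\beta>0$ for all $\bfx\in B$ and $\bfz\in\Gamma$. First I would pass the modulus inside the integral; because $\abs{\bfn}=1$ the cross product does not increase the norm, so $\abs{\bfT(\bfz)}\leq 2\int_B\abs{G(\bfx-\bfz)f(\bfx)\bfE(\bfx)}\dint\bfx$. Writing the matrix--vector product componentwise and bounding each entry by $\max_{j,\ell}\abs{G_{j\ell}(\bfx-\bfz)}$ gives a pointwise bound of the integrand by $C_0\max_{j,\ell}\abs{G_{j\ell}(\bfx-\bfz)}\,\abs{f(\bfx)}\,\abs{\bfE(\bfx)}$ for a purely dimensional constant $C_0$. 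The factors $\abs{f}$ and $\abs{\bfE}$ are controlled by $\norm{f}{L^\infty}$ and $\norm{\bfE}{L^\infty(\Omega)}$ (the latter finite since $\bfE\in H^2(\Omega)^3\hookrightarrow\mcC^0(\overline{\Omega})^3$ by \autoref{thm:reg_maxwell2}), and $\max_{j,\ell}\abs{G_{j\ell}(\bfx-\bfz)}$ by $p(1/\abs{\bfx-\bfz})$ thanks to \autoref{lemma:estimate_G}.

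The key step is then to replace the variable distance $\abs{\bfx-\bfz}$ by the distance $\abs{\bfx_0-\bfz}$ to the center, uniformly in $\bfx\in B$. For this I would use the triangle inequality $\abs{\bfx_0-\bfz}\leq\abs{\bfx-\bfz}+\alpha$ together with $\abs{\bfx-\bfz}\geq\beta$ to obtain $\frac{1}{\abs{\bfx-\bfz}}\leq\frac{c}{\abs{\bfx_0-\bfz}}$ with $c=1+\alpha/\beta\geq 1$. Since $p$ has positive coefficients and $p(0)=0$, it is nondecreasing and satisfies $p(ct)\leq c^3 p(t)$ for every $c\geq 1$; hence $p(1/\abs{\bfx-\bfz})\leq c^3\,p(1/\abs{\bfx_0-\bfz})$. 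The right-hand side no longer depends on $\bfx$, so integrating the constant bound over $B$ produces exactly the factor $\vol(B)$ and yields \eqref{eq:ineq_T} with $C=2C_0c^3\norm{f}{L^\infty}\norm{\bfE}{L^\infty(\Omega)}$, which depends only on $\beta$, $f$ and $\bfE$ (the radius $\alpha$ entering through the fixed support of $f$).

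For the estimate \eqref{eq:ineq_divT} of $\div_\Gamma\bfT$ I would differentiate the representation of $\bfT$ tangentially at $\bfz\in\Gamma$; differentiation under the integral sign is legitimate because the kernel is smooth on $B$ for $\bfz$ near $\Gamma$. Writing $\bfT(\bfz)=-2\,\bfV(\bfz)\times\bfn_\bfz$ with $\bfV(\bfz)=\int_B G(\bfx-\bfz)f(\bfx)\bfE(\bfx)\dint\bfx$, the product rule produces two types of terms: those in which a tangential derivative hits $G(\bfx-\bfz)$, controlled by $\abs{\partial_m G(\bfx-\bfz)}\leq q(1/\abs{\bfx-\bfz})$ from \autoref{lemma:estimate_G}, and those in which the derivative hits the normal field $\bfn_\bfz$, producing curvature factors that are bounded because $\Gamma$ is of class $\mcC^{2,1}$, multiplied by $\abs{G}\leq p(1/\abs{\bfx-\bfz})$. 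Since $1/\abs{\bfx-\bfz}\leq 1/\beta$ is bounded and both $p,q$ are polynomials with positive coefficients vanishing at $0$, the ratio $p/q$ stays bounded on $(0,1/\beta]$, so the curvature terms can be absorbed into a multiple of $q(1/\abs{\bfx-\bfz})$. Applying the same distance-transfer trick as above, now with $q(ct)\leq c^4 q(t)$, and integrating over $B$ gives \eqref{eq:ineq_divT}.

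The main obstacle is the bookkeeping of the surface-divergence computation in the third paragraph: one must justify differentiation under the integral sign, collect the curvature contributions coming from $\div_\Gamma$ of a field written in terms of $\bfn_\bfz$, and verify that these lower-order (in $1/\abs{\bfx-\bfz}$) terms are genuinely dominated by the $q$-term on the admissible distance range. By contrast, the scalar estimate \eqref{eq:ineq_T} and the distance-transfer inequalities are routine once the integrand has been bounded pointwise by \autoref{lemma:estimate_G}.
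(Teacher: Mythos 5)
Your proof of \eqref{eq:ineq_T} is essentially the paper's: pull the modulus inside the integral, use $\supp(f)\subset B$ together with the embedding $H^2(\Omega)^3\hookrightarrow\mcC^0(\overline{\Omega})^3$ to control $f$ and $\bfE$ in $L^\infty(B)$, bound the kernel entrywise by $p(1/\abs{\bfx-\bfz})$ via \autoref{lemma:estimate_G}, and transfer distances with $\abs{\bfx_0-\bfz}\le(1+\alpha/\beta)\abs{\bfx-\bfz}$ and the positivity of the coefficients of $p$. For \eqref{eq:ineq_divT}, however, you take a genuinely different route. The paper uses the vector-calculus identity $\div_\Gamma\bfT(\bfz)=\bfn\cdot(\curl\bfT)(\bfz)$, valid for tangential fields of the form $\bfv\times\bfn$, so the surface divergence reduces to the normal component of a curl that differentiates only the kernel $G(\cdot-\bfz)$; the bound by $q$ is then immediate from \eqref{eq:estimate_dmG}, and no curvature terms ever appear. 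Your product-rule computation of the surface divergence is workable but heavier: you must justify differentiation under the integral sign (routine, as you note, since the kernel is smooth at distance $\ge\beta$ from $B$) and control the terms where tangential derivatives hit $\bfn_\bfz$. Your absorption of those curvature terms into a multiple of $q$ is legitimate: on $(0,1/\beta]$ one has $p(t)\le\left(p_1+p_2/\beta+p_3/\beta^2\right)t$ while $q(t)\ge q_1t$ with $q_1>0$ by the strict positivity of the coefficients in \autoref{lemma:estimate_G}, so $p/q$ is bounded there. In short, the paper's identity buys the elimination of all curvature bookkeeping, while your route is self-contained at the price of invoking the $\mcC^{2,1}$ regularity of $\Gamma$ to bound derivatives of the normal field.

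One point you should repair: your constant $C=2C_0c^3\norm{f}{L^\infty}\norm{\bfE}{L^\infty(\Omega)}$ with $c=1+\alpha/\beta$ depends on the radius $\alpha$, whereas the theorem asserts $C=C(\beta,f,\bfE)$, and \autoref{p:volume} explicitly relies on independence from $\alpha$; attributing this dependence to ``the fixed support of $f$'' does not remove it when the estimate is applied uniformly over balls of varying radius. The paper's fix is elementary and should be included: since $B\subset\Omega$, one has $\alpha\le\diam(\Omega)/2$, hence $c\le 1+\diam(\Omega)/(2\beta)$, a bound depending only on $\beta$ and the geometry of $\Omega$.
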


	\begin{proof}
		We recall that
		\[\bfT(\bfz) = -2 \left(\int_\Omega G(\bfx -\bfz)f(\bfx)\bfE(\bfx) \dint\bfx\right) \times \bfn.\]
		Since the background parameters $\eps$ and $\sigma$ are constant, the solution $\bfE$ of problem~\eqref{eq:maxwell_var} belongs to $H^2(\Omega) \hookrightarrow \mcC^0(\overline{\Omega})$ and $\norm{\bfE}{\infty,B}$ is well defined. Taking into account that $\supp(f)\subset B$, we get
		\[|\bfT(\bfz)| \leq 2 \norm{f}{\infty,B} \max_{1 \leq j,\ell \leq 3} \norm{G(\cdot-\bfz)}{\infty,B} \norm{\bfE}{\infty,B}\vol(B).\]
		Now, let $\bfx \in B$ and $\bfz \in \Gamma$. According to the assumptions on $B$, we have $|\bfx - \bfz| \geq \beta > 0$. Moreover, since $|\bfx_0 - \bfx| \leq \alpha$, we get
		\[|\bfx_0 - \bfz| \leq \alpha + |\bfx - \bfz| \leq |\bfx - \bfz| \left(\frac{\alpha}{|\bfx - \bfz|} + 1\right) \leq |\bfx - \bfz| \left(\frac{\alpha}{\beta} + 1\right).\]
		The constant $\frac{\alpha}{\beta} + 1$ can be majored for all possible values of $\alpha$ by $\frac{\diam(\Omega)}{2\beta}+1$. Noticing that the polynomial $p$ in \autoref{lemma:estimate_G} has positive coefficients allows to write
		\[p\left(\frac{1}{|\bfx-\bfz|}\right) \leq Cp\left(\frac{1}{|\bfx_0 -\bfz|}\right)\]
		and completes the proof of the estimate~\eqref{eq:ineq_T}.

		In order to obtain the estimate for the surface divergence of $\bfT$, we notice that
		\[\div_\Gamma \bfT(\bfz) = \bfn \cdot (\curl\bfT)(\bfz),\]
		for any $\bfz \in \Gamma$. But the computation of $\curl\bfT$ involves the first order derivatives of the fundamental solution $G(\cdot - \bfz)$ which are estimated with the help of the polynomial $q$ (see \eqref{eq:estimate_dmG}). This yields \eqref{eq:ineq_divT} noticing again that $q$ has positive coefficients.
	\end{proof}

	\subsection{Estimates for \texorpdfstring{$\bfE^1 \times \bfn$}{E1 x n}}
	\label{ss:propertyE1}

	We deduce from the previous section the following estimates that yield relations between the tangential trace of the sensitivity and caracteristics of the perturbation located in the ball $B$. As before, let $B = B_\alpha(\bfx_0)$ be the ball of radius $\alpha$ and center $\bfx_0 \in \Omega$. We assume that $\dist(B,\Gamma) \geq \beta$ for a fixed constant $\beta > 0$. We denote by $\hat{\bfx}$ the projection of $\bfx_0$ on the boundary $\Gamma$.

	\begin{proposition}
		\label{p:volume}
		Let $\bfE^1$ denote the solution of the sensitivity equation~\eqref{eq:sensitivity}. Under the assumptions of \autoref{theorem:ineq_T}, we have
		\begin{equation}
			\label{rel:volume}
			\norm{\bfE^1 \times \bfn}{0,\Gamma} \leq C\left(p\left(\frac{1}{|\bfx_0-\hat{\bfx}|}\right) + q\left(\frac{1}{|\bfx_0-\hat{\bfx}|}\right)\right)\vol(B),
		\end{equation}
		where $p$ and $q$ are the polynomials of \autoref{lemma:estimate_G}, and $C > 0$ is a constant independent from $\alpha$ and $d$.
	\end{proposition}

	\begin{proof}
		First notice that $\norm{\bfE^1 \times \bfn}{0,\Gamma} \leq \area(\Gamma)^{1/2} \norm{\bfE^1 \times \bfn}{\infty,\Gamma}$ since $\bfE^1$ is continuous on $\overline{\Omega}$ according to the regularity assumptions. We next recall that
		\[\norm{\bfE^1 \times \bfn}{\infty,\Omega} \leq C_\mcM \norm{\bfT}{\mcT_d(\Gamma)}\]
		where $C_\mcM$ denotes the norm of $(\mcI - \mcM)^{-1}$. According to \autoref{theorem:ineq_T}, $|\bfT(\bfz)|$ (resp. $|\div_\Gamma\bfT(\bfz)|$) can be estimated for any $\bfz \in \Gamma$ by the polynomial $p$ (resp. $q$) which has positive coefficients and satisfies $p(0) = 0$ (resp. $q(0) = 0$). Notice further that $\frac{1}{|\bfx_0-\bfz|}$ takes its maximum value for $\bfz = \hat{\bfx}$, and so do $p\left(\frac{1}{|\bfx_0-\bfz|}\right)$ and $q\left(\frac{1}{|\bfx_0-\bfz|}\right)$. Then, \eqref{rel:volume} follows from \eqref{eq:ineq_T} and \eqref{eq:ineq_divT}.
	\end{proof}

	The next estimate states that $\bfE^1 \times \bfn$ behaves similar to $\bfT$ on the boundary $\Gamma$:
	\begin{proposition}
		\label{p:rel_proj}
		Under the assumptions of \autoref{theorem:ineq_T}, there are constants $\widetilde{C} > 0$ and $c > 0$ independent from $\bfz$ and the ball $B = B_\alpha(\bfx_0)$ such that for any $\bfz \in \Gamma$,
		\begin{equation}
			\label{eq:rel_proj}
			|(\bfE^1\times\bfn)(\bfz)| \leq \frac{\widetilde{C}}{|\bfx_0-\bfz|} + c.
		\end{equation}
	\end{proposition}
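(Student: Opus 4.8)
The plan is to use the integral equation \eqref{eq:EI} to split $\bfa = \bfE^1 \times \bfn$ into a part that carries the full pointwise dependence on $\bfz$ and a uniformly bounded remainder. Since \autoref{thm:EI-bij} guarantees $\bfa \in \mcT_d(\Gamma)$, I would rewrite $(\mcI - \mcM)\bfa = \bfT$ as
\[
\bfa = \bfT + \mcM\bfa,
\]
so that $|(\bfE^1 \times \bfn)(\bfz)| \leq |\bfT(\bfz)| + |\mcM\bfa(\bfz)|$ for every $\bfz \in \Gamma$. The first summand will furnish the singular term $\widetilde{C}/|\bfx_0-\bfz|$, whereas the regularized second summand will be absorbed into the additive constant $c$.

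For the term $|\bfT(\bfz)|$, I would begin from the pointwise estimate \eqref{eq:ineq_T}, that is $|\bfT(\bfz)| \leq C\, p(1/|\bfx_0-\bfz|)\vol(B)$. The decisive observation is that, since $\dist(B,\Gamma) \geq \beta$ and $\bfx_0$ is the center of $B$, one has $|\bfx_0-\bfz| \geq \beta$, hence $1/|\bfx_0-\bfz| \leq 1/\beta$, for every $\bfz \in \Gamma$. As $p$ is a degree-$3$ polynomial with positive coefficients and $p(0)=0$, each monomial $(1/|\bfx_0-\bfz|)^k$ with $k \geq 2$ is controlled by $(1/\beta)^{k-1}(1/|\bfx_0-\bfz|)$; summing the coefficients produces a constant $C_p = C_p(\beta,\xi)$ with $p(1/|\bfx_0-\bfz|) \leq C_p/|\bfx_0-\bfz|$. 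This collapses the cubic profile of $\bfT$ to a linear one and gives $|\bfT(\bfz)| \leq C C_p \vol(B)/|\bfx_0-\bfz|$.

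For the term $\mcM\bfa$, I would invoke the smoothing property established in the proof of \autoref{thm:EI-bij}: $\mcM$ maps $\mcT_d(\Gamma)$ continuously into $\mcT^{0,\alpha}_d(\Gamma)$, which in turn embeds continuously into $\mcC^0(\Gamma)^3$. Consequently $\norm{\mcM\bfa}{\infty,\Gamma} \leq C\norm{\bfa}{\mcT_d(\Gamma)} \leq C C_\mcM \norm{\bfT}{\mcT_d(\Gamma)}$ by \eqref{estimation_sensitivity}. Estimates \eqref{eq:ineq_T} and \eqref{eq:ineq_divT}, combined once more with $|\bfx_0-\bfz| \geq \beta$, bound both $\norm{\bfT}{\infty,\Gamma}$ and $\norm{\div_\Gamma\bfT}{\infty,\Gamma}$ by a constant times $\vol(B)$, so that $\norm{\mcM\bfa}{\infty,\Gamma} \leq c_0\vol(B)$ with $c_0$ independent of $\bfz$ and $B$. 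Adding the two contributions and using $\vol(B) \leq \vol(\Omega)$ to remove the residual dependence on the ball, I would set $\widetilde{C} = C C_p \vol(\Omega)$ and $c = c_0\vol(\Omega)$ to conclude \eqref{eq:rel_proj}. The delicate point is exactly this degree reduction from the cubic $p$ to the single power $1/|\bfx_0-\bfz|$: it is licit only because the center $\bfx_0$ stays at distance at least $\beta$ from $\Gamma$, and it is what allows the higher-order terms together with the regular contribution $\mcM\bfa$ to be merged into the single constant $c$.
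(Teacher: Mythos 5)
Your proof is correct and follows essentially the same route as the paper: where the paper splits $\bfa = \bfT + (\mcI - \mcM)^{-1}\mcM\bfT$, you use the mirror identity $\bfa = \bfT + \mcM\bfa$, but both arguments isolate the pointwise term $\bfT(\bfz)$, reduce the cubic polynomial $p$ to a single power of $1/|\bfx_0 - \bfz|$ via $|\bfx_0 - \bfz| \geq \beta$, and absorb the operator-smoothed remainder into the constant $c$ through a graph-norm bound on $\bfT$. Your explicit use of $\vol(B) \leq \vol(\Omega)$ to make $\widetilde{C}$ and $c$ independent of the ball is a detail the paper leaves implicit, but it is the same argument.
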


	\begin{proof}
		Let $\bfa = \bfE^1 \times \bfn = (\mcI - \mcM)^{-1}\bfT$. The following identity can be easily verified,
		\[(\mcI - \mcM)^{-1}\bfT = \bfT + (\mcI - \mcM)^{-1}\mcM\bfT.\]
		Now, define the constant $K_\mcM = \norm{(\mcI - \mcM)^{-1}\mcM}{\mcT_d(\Gamma)}$. One gets
		\[\frac{1}{K_\mcM}\abs{\bfa(\bfz)} \leq \frac{1}{K_\mcM}\abs{\bfT(\bfz)} + \frac{1}{K_\mcM} \norm{(\mcI - \mcM)^{-1}\mcM}{\mcT_d(\Gamma)} \norm{\bfT}{\mcT_d(\Gamma)}\]
		or, equivalently,
		\[\abs{\bfa(\bfz)} \leq \abs{\bfT(\bfz)} + K_\mcM \norm{\bfT}{\mcT_d(\Gamma)}.\]
		An estimate for $|\bfT(\bfz)|$ has been obtained in \autoref{theorem:ineq_T}. Here, we only keep the dominating terms. Since $|\bfx_0 - \bfz| \geq \beta$, we have
		\[\frac{1}{|\bfx_0 - \bfz|^n} \leq \frac{1}{\beta^{n-1}} \frac{1}{|\bfx_0 - \bfz|}\]
		for any $n \geq 1$ which yields the first term on the right hand side of \eqref{eq:rel_proj}.

		In order to get an estimate of $\norm{\bfT}{\mcT_d(\Gamma)}$, we state as in the proof of \autoref{p:volume} that $p\left(\frac{1}{|\bfx_0 - \bfz|}\right)$ and $q\left(\frac{1}{|\bfx_0 - \bfz|}\right)$ reach their maximum values at $\bfz = \hat{\bfx}$. Therefore, we have
		\[\norm{\bfT}{\mcT_d(\Gamma)} \leq \frac{C_1}{|\bfx_0 - \hat{\bfx}|}\]
		and the right hand side of the above inequality can be majored by the constant $c = \frac{C_1}{\beta} > 0$ independently from the ball $B$.
	\end{proof}

	\section{The sensitivity analysis for solving an inverse problem}
	\label{sec:inverse}

	The inverse medium problem, that we are interested in, is to localize inhomogeneities in the electrical parameters of the medium from total or partial boundary data on $\Gamma$ for a given (boundary) source term, at a fixed frequency $\omega$. The setting is similar to the ones in \cite{Ammari03, DarbasLohrengel, deBuhanDarbas}. Our inverse method is based on the informations obtained by the sensitivity analysis.

	\subsection{An inverse medium problem}

	We assume that $\Omega$ is filled with a medium of electrical permittivity and conductivity
	\begin{equation}
		\label{def_perturbations}
		\eps_p = \eps + a_\eps \varrho_\eps \stext{and} \sigma_p = \sigma + a_\sigma \varrho_\sigma
	\end{equation}
	where $\varrho_\eps$ and $\varrho_\sigma$ are the characteristic functions of a perturbation in the homogeneous background parameters $\eps$ and $\sigma$.

	Let $\bfE_\bfeta$ be a plane wave of direction $\bfeta \in \R^3$,
	\[\bfE_\bfeta(\bfx) = \bfeta^\perp e^{i\bfeta\cdot\bfx}.\]
	Here, $\bfeta^\perp$ is a unit vector orthogonal to $\bfeta$. $\bfE_\bfeta$ is acting as a boundary source term for the Neumann trace. Notice that other source terms could have been considered. In the absence of inhomogeneities, the electric field $\bfE$ is solution to
	\begin{equation}
		\label{eq:inverse_E}
		\left\{
		\begin{array}{rcl@{\hspace{4\tabcolsep}}l}
			\curl\curl\bfE - k^2\dfrac{1}{\eps_0}\left(\eps + i\dfrac{\sigma}{\omega}\right)\bfE &=& 0, & \stext[r]{in} \Omega, \\
			\curl\bfE \times \bfn &=& \curl\bfE_\bfeta \times \bfn, & \stext[r]{on} \Gamma.
		\end{array}
		\right.
	\end{equation}

	Next, consider the electric field $\bfE_p$ in the presence of the inhomogeneities and subject to the same boundary data. $\bfE_p$ is solution to the perturbed problem
	\begin{equation}
		\label{eq:inverse_Eh}
		\left\{
		\begin{array}{rcl@{\hspace{4\tabcolsep}}l}
			\curl\curl\bfE_p - k^2\dfrac{1}{\eps_0}\left(\eps_p + i\dfrac{\sigma_p}{\omega}\right)\bfE_p &=& 0, & \stext[r]{in} \Omega, \\
			\curl\bfE_p \times \bfn &=& \curl\bfE_\bfeta \times \bfn, & \stext[r]{on} \Gamma.
		\end{array}
		\right.
	\end{equation}

	We focus on perturbations of small amplitude and simple geometries (sphere, ellipsoid, \ldots). The inverse problem consists in retrieving their centers and volumes from boundary data $(\bfE_p - \bfE) \times \bfn$. According to Taylor expansion~\eqref{eq:taylor}, the boundary data are related to the sensitivity data $\bfE^1 \times \bfn$ of the electric field with respect to these perturbations. The estimates of \autoref{sensitivity_sec}, completed by a numerical study, allow to find three explicit relations between the data and the characteristics of the inhomogeneities. These relations and their link to the theoretical estimates are presented in \autoref{sec:relations}. They have been validated numerically for a large number of configurations and the results of this verification are presented in \autoref{sec:conjectures}.

	\subsection{Explicit relations between data and inhomogeneities}
	\label{sec:relations}

	We infer from the results of \autoref{ss:propertyE1} that the boundary sensitivity data $\bfE^1 \times \bfn$ behave approximately as follows,
	\begin{equation}
		\label{R1}
		|(\bfE^1 \times \bfn)(\bfz)| \approx \frac{C}{|\bfz - \bfx_0|} + c, \forall \bfz \in \Gamma,
		\tag{R1}
	\end{equation}
	where $C$ and $c$ are (unknown) positive constants. In other words, the modulus of the data on the boundary takes its maximum value at the projection $\hat{\bfx}$ of the perturbation's center $\bfx_0$ and should be small far away from $\hat{\bfx}$. This allows to retrieve the position of the projection $\hat{\bfx}$ from the modulus of the data (see \autoref{fig:localization_schema}).

	\begin{figure}[hbt]
		\centering
		\includegraphics[width=0.3\textwidth]{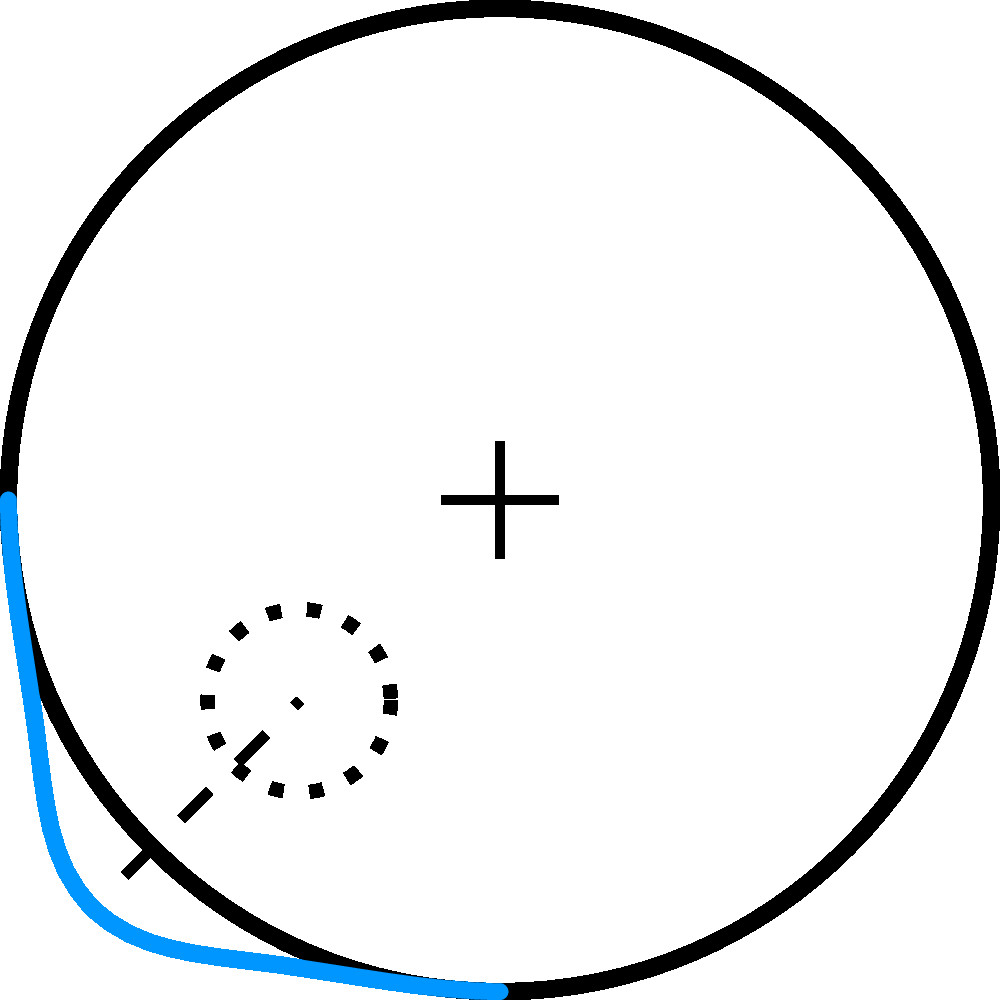}
		\caption{Illustration of the relation~\eqref{R1}. The part of the boundary $\Gamma$ where the largest values of the modulus of the data are reached.}
		\label{fig:localization_schema}
	\end{figure}

	Next, we aim to reconstruct the depth of the perturbation. Together with the projection $\hat{\bfx}$ obtained in the previous step, this yields the center $\bfx_0$. To this end, let $B = B_\alpha(\bfx_0)$ be the ball of radius $\alpha$ and center $\bfx_0$ and denote by $d = |\bfx_0 - \hat{\bfx}|$ the distance of the center of the perturbation to its projection on the boundary. Let $\bfE_d^1$ be the sensitivity in the direction $\varrho_d = (0,1_B)$. We introduce the set
	\begin{equation}
		\label{defGamma}
		\Gamma_{\vartheta}(d) = \set*{\bfz \in \Gamma}{|(\bfE^1_d \times \bfn)(\bfz)| \geq \vartheta\norm{\bfE^1_d \times \bfn}{\infty,\Gamma}},
	\end{equation}
	where $0 < \vartheta < 1$ is a fixed threshold. The following relation has been obtained from numerical simulations,
	\begin{equation}
		\label{R2}
		\frac{\area(\Gamma_{\vartheta}(d))}{\area(\Gamma)} \approx \frac{1}{1 + e^{p_\vartheta(d)}}
		\tag{R2}
	\end{equation}
	where $p_\vartheta$ is a (known) polynomial function of degree 4 that is independent from the radius $\alpha$ of the perturbation (see \autoref{fig:depth_areas}). Since the left hand side of relation~\eqref{R2} can be computed from the boundary data $\bfE^1_d \times \bfn$, relation~\eqref{R2} allows to compute the depth $d$ by inversion of the function $1/(1+e^{p_\vartheta(d)})$.

	\begin{remark}
		Relation~\eqref{R2} could be interpreted in the following probabilistic way. Assume that the boundary point $\bfz \in \Gamma$ is chosen randomly following a uniform distribution. Then, the term
		\begin{equation}
			\label{eq:lhsR2}
			\frac{|(\bfE^1\times\bfn)(\bfz)|}{\norm{\bfE^1 \times \bfn}{\infty,\Gamma}}
		\end{equation}
		may be interpreted as a random variable $X$ that follows a probabilistic law described by a density function $f_X(\bfz;d)$ depending on the depth $d$. Consequently, the left hand side of relation~\eqref{R2} is given by
		\[\frac{\area(\Gamma_{\vartheta}(d))}{\area(\Gamma)} \approx \mathbb{P}(X \geq \vartheta) = 1 - F_X(\vartheta; d)\]
		where $F_X(\vartheta; d)$ is the cumulative distribution function associated with the density $f_X(\bfz; d)$. Now, we infer from \autoref{ss:propertyE1}, that on the one hand, $|(\bfE^1\times\bfn)(\bfz)|$ takes its maximum at the point $\bfz = \hat{\bfx}$, and, on the other, $\norm{\bfE^1 \times \bfn}{\infty,\Gamma}$ behaves roughly speaking as $1/|\bfx_0 - \hat{\bfx}| = 1/d$. If we assume that the random variable $X$ follows a logistic law (which is consistent with the theoretical results),
		\[X \thicksim \frac{e^{-x/d}}{d(1+e^{-x/d})^2}\]
		we get
		\[\frac{\area(\Gamma_{\vartheta}(d))}{\area(\Gamma)} = 1 - \frac{1}{1 + e^{-\vartheta/d}} = \frac{1}{1+e^{\vartheta/d}}.\]
		We may notice that this behavior fits qualitatively with the numerical observations (see \autoref{fig:depth_areas}). For a better concordance with the numerical results, however, the quantity $1 - F_X$ has been fitted with the help of a polynomial function $p_{\vartheta}$ such that
		\[1 - F_X(\vartheta; d) \approx \frac{1}{1 + e^{p_\vartheta(d)}}.\]
		This yields relation~\eqref{R2}. Notice also that both the numerator and the denominator in \eqref{eq:lhsR2} depend linearly on the volume of the perturbation. Consequently, relation~\eqref{R2} should not behave on $\vol(B)$ which is confirmed by the numerical results.
	\end{remark}

	Finally, we aim to obtain the volume of the perturbation. To this end, we recall that according to \eqref{rel:volume}, the $L^2$-norm of the boundary data $\bfE^1 \times \bfn$ is related to $\vol(B)$ by a linear relation with a constant depending on $1/|\bfx_0 - \hat{\bfx}| = 1/d$. This constant can be fitted numerically and leads to the following relation between the data and the volume,
	\begin{equation}
		\label{R3}
		\norm{\bfE^1 \times \bfn}{0,\Gamma} \approx e^{p(d)}\vol(B)
		\tag{R3}
	\end{equation}
	where, this time, $p$ is a polynomial function of degree 2.

	The relations \eqref{R1}--\eqref{R3} have been obtained from estimates of the right hand side of an appropriate integral equation with the sensitivity as unknown. Their precise formulation is based on the numerical fitting of polynomial parameters from a data base. To the best of our knowledge, this point of view has not yet been adopted in literature. Integral operators have been used in \cite{AmmariKangI, AmmariKangII} to develop asymptotic expansions that allow to retrieve informations about the localization and shape of small-volume perturbations in the parameters of both the conductivity and Helmholtz equation.

	\subsection{Numerical verification of the relations (\ref{R1}), (\ref{R2}), and (\ref{R3})}
	\label{sec:conjectures}

	The numerical verification of the above explicit relations has been done in the case where the computational domain is the unit sphere. Let us consider a single spherical perturbation $B = B_{\alpha}(\bfx_0)$ of radius $\alpha > 0$ and center $\bfx_0 \in \Omega$. We study the Gâteaux derivative of the electric field in the direction $\varrho = (0,\textbf{1}_B)$ for sample values of $\bfx_0$ and $\alpha$. For each couple $(\bfx_0, \alpha)$, we compute the tangential trace $\bfE^1 \times \bfn$ on $\Gamma$ where $\bfE^1$ is the solution of the sensitivity equation~\eqref{eq:sensitivity}.

	The physical parameters are the same as in \autoref{sec:numerical_results}. We fix $h = 0.14$ and $N_e = \num{114457}$. In the sequel, we present some illustrations of the three relations, and explain in which way the polynomial functions of relations \eqref{R2} and \eqref{R3} have been obtained.

	\subsubsection{Projection of the perturbation's center}

	Property~\eqref{R1} is illustrated in \autoref{fig:localization_trace}. We report the modulus of the sensitivity $\bfE^1 \times \bfn$ at the boundary in the direction $\varrho = (0,\textbf{1}_{B})$ for a perturbation of the conductivity centered at $\bfx_0 = (-0.85,0,0)$ with radius $\alpha = 0.1$. In order to improve the readability of the image, we use an equirectangular projection and create an image of ratio 2:1. The coordinates $(x,y)$ of each pixel are mapped to $(\theta,\varphi) \in \interval[open right]{0}{2\pi} \times \interval{-\frac{\pi}{2}}{\frac{\pi}{2}}$. Then, the color of the pixel corresponds to the value of the function at coordinates $(\theta,\varphi)$ on the sphere. We observe that the trace $\bfE^1 \times \bfn$ is localized in a neighboorhood of the point $\hat{\bfx} = (-1,0,0)$ which is the projection of the perturbation's center $\bfx_0$ on $\Gamma$. The same localization property is observed for any tested couple $(\bfx_0, \alpha)$.

	\begin{figure}[p]
		\centering
		\begin{minipage}[c]{0.65\textwidth}
			\includegraphics[width=\textwidth]{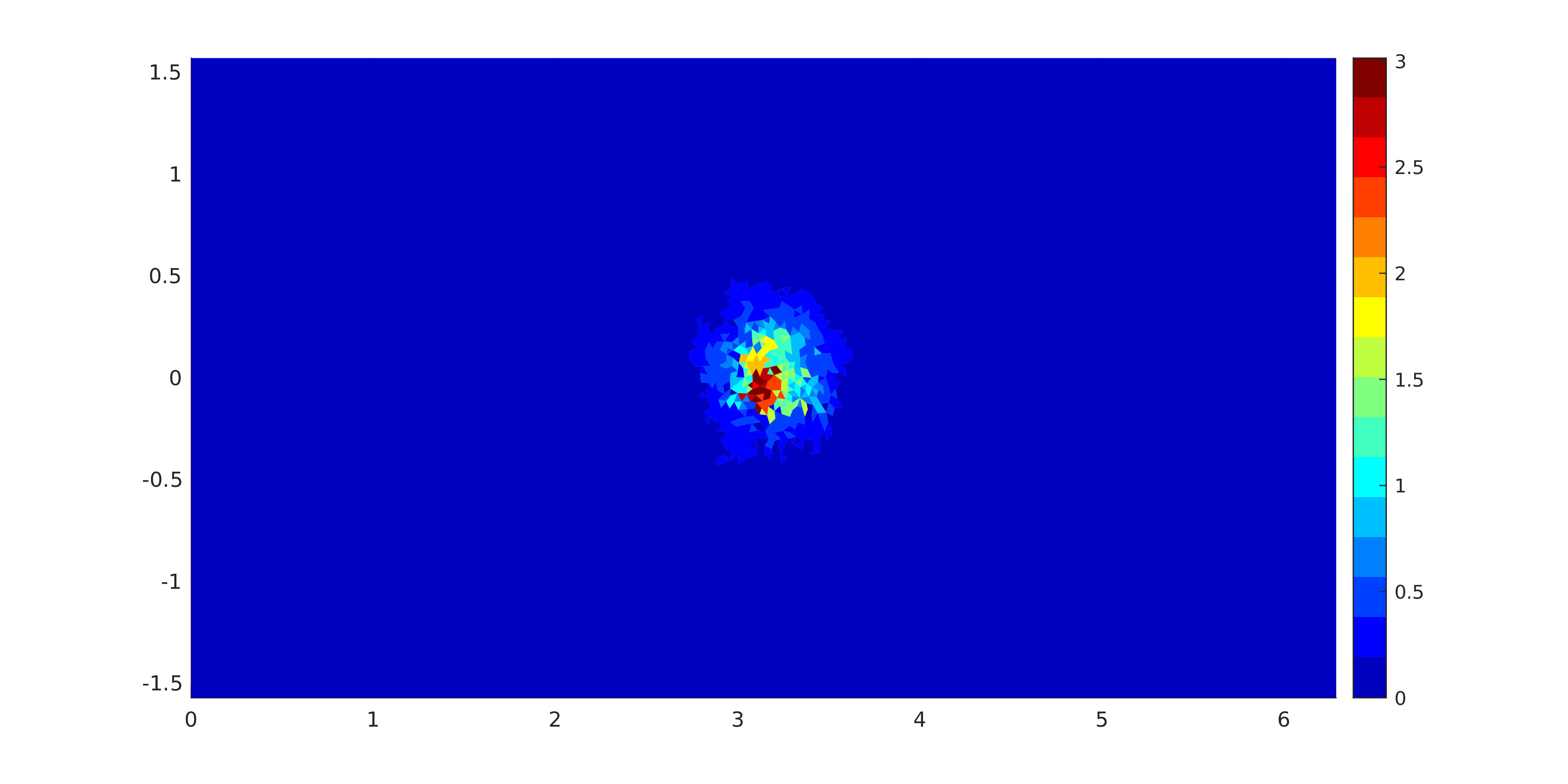}
			\includegraphics[width=\textwidth]{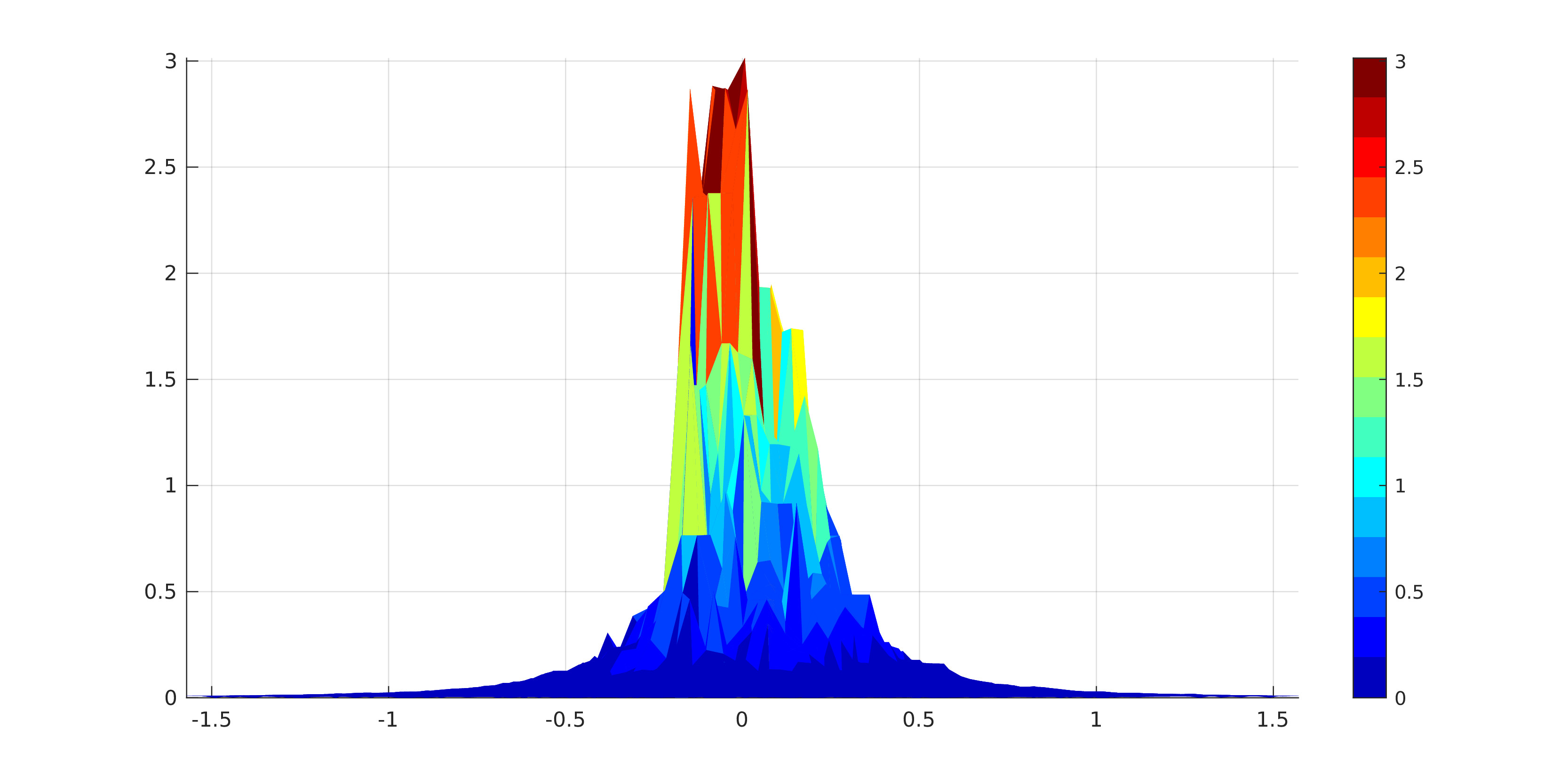}
		\end{minipage}
		\hfill
		\begin{minipage}[c]{0.34\textwidth}
			\caption{Illustration of the relation~\eqref{R1}. Top: modulus of the trace $|\bfE^1 \times \bfn|$ on the unfolded sphere. Bottom: an amplitude peak appears at the point $\hat{\bfx}$ (view from side).}
			\label{fig:localization_trace}
		\end{minipage}
	\end{figure}

	\subsubsection{Depth of the perturbation}

	In order to verify relation~\eqref{R2} and determine numerically the coefficients of the involved polynomial function $p_{\vartheta}$, we fix ${\hat{\bfx}} = (-1,0,0)$ and a radius $\alpha$, and consider different centers $\bfx_0(d) = (1-d) \hat{\bfx}$, $d \in \interval[open]{\alpha}{1}$. For each sample value $d$, we compute numerically the ratio $\area(\Gamma_{\vartheta}(d))/\area(\Gamma)$ from the boundary data $\bfE^1_d \times \bfn$ associated with the direction $\varrho = (0, \textbf{1}_B)$ where $B = B_\alpha(\bfx_0(d))$. We get the distribution function given by \eqref{R2} where the polynomial function $p_\vartheta$ has been chosen to fit the data (see \autoref{fig:depth_areas}, left). Next, we perform the same test for different radii $\alpha$. It turns out that the behavior of the plotted curve is independent from $\alpha$ (\autoref{fig:depth_areas}, right). Therefore, the same polynomial $p_\vartheta$ allows to retrieve the depth $d$ by inverting the relation \eqref{R2} independently from the (unknown) radius $\alpha$.

	\begin{figure}[p]
		\centering
		\includegraphics[width=0.4\textwidth]{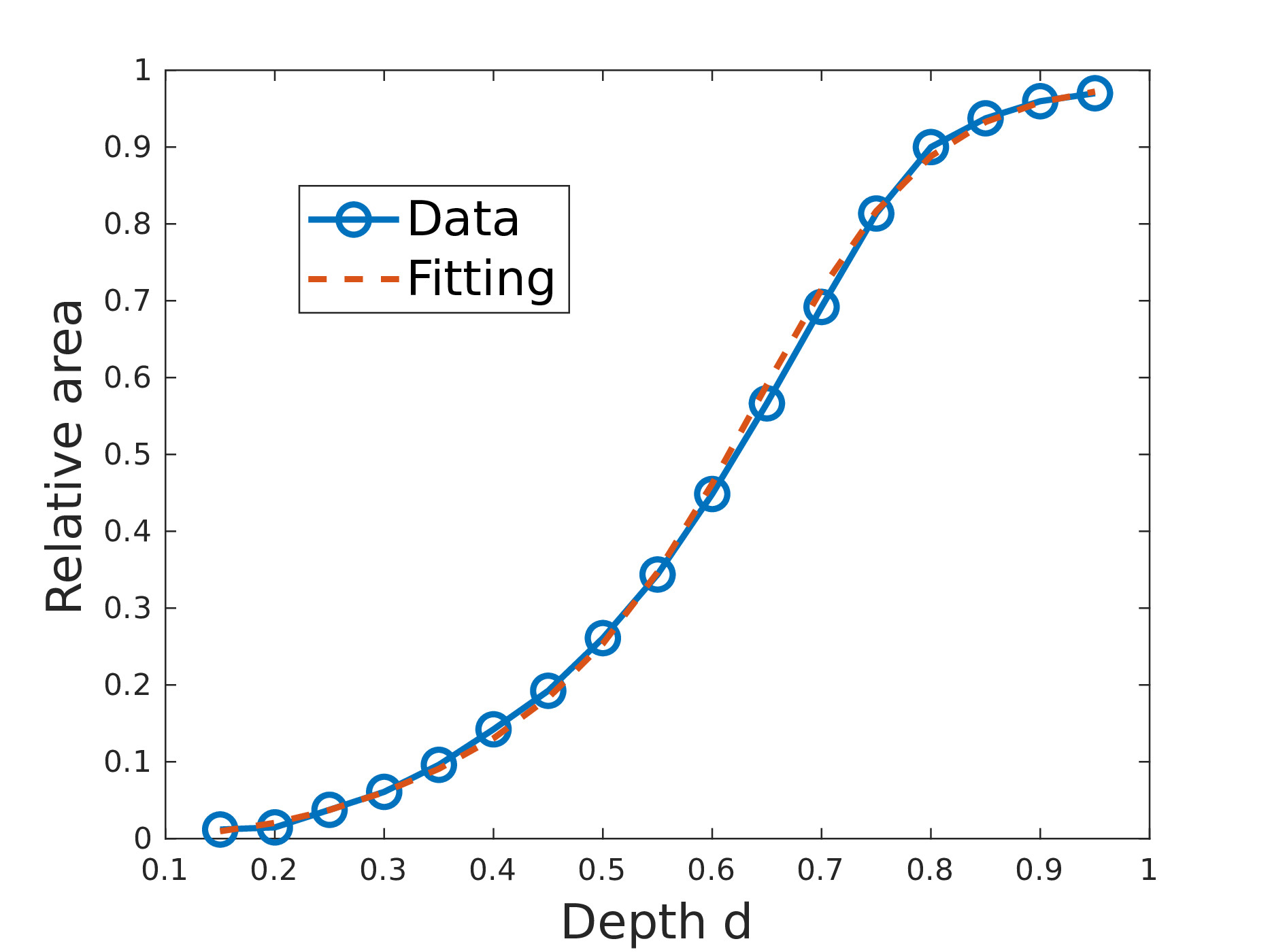}
		\includegraphics[width=0.4\textwidth]{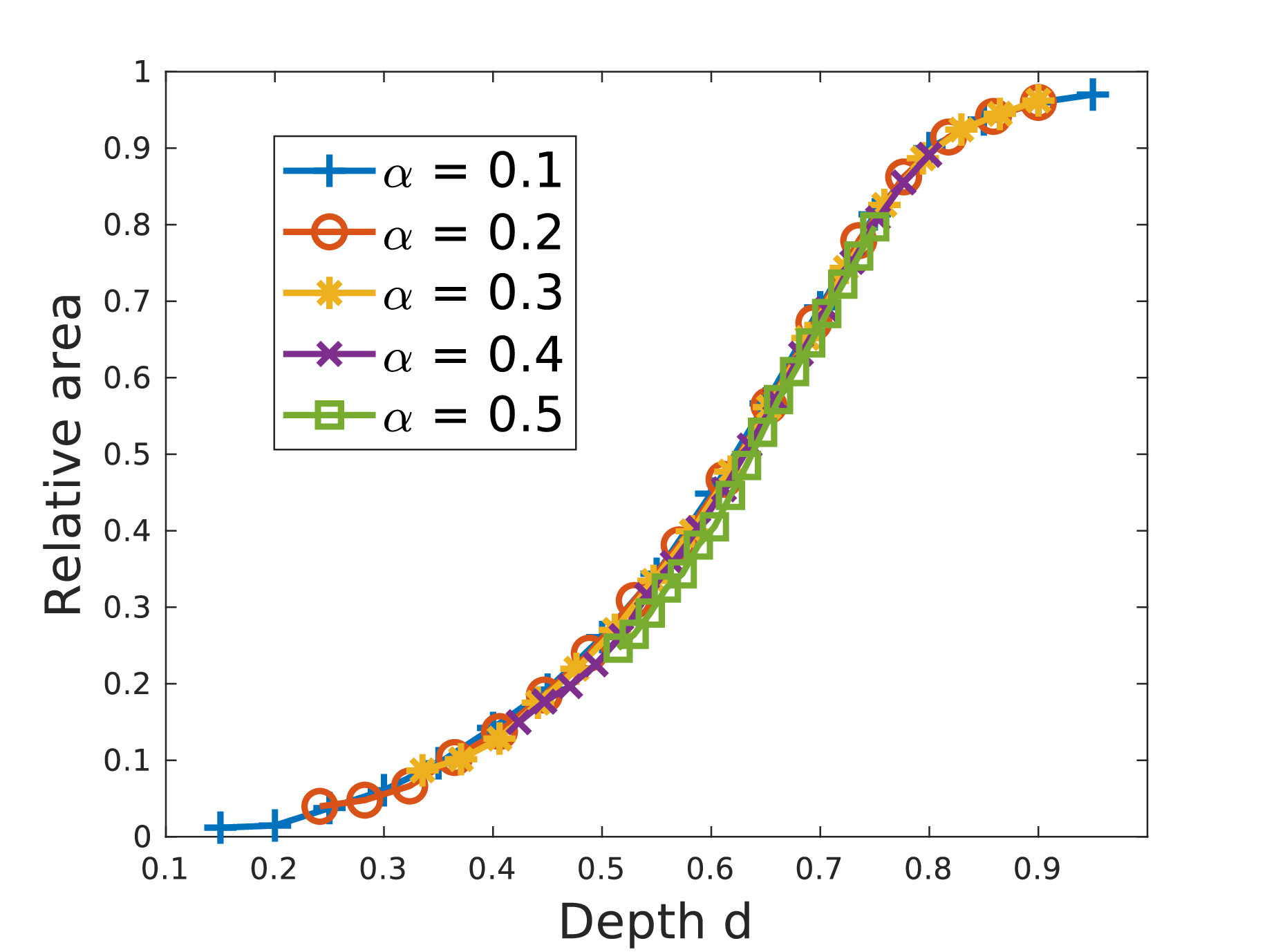}
		\caption{Illustration of relation~\eqref{R2}. Left: fitting of the numerical data to a curve of shape $\dfrac{1}{1 + e^{p_\vartheta(d)}}$, $\alpha = 0.1, \vartheta = 0.2$. Right: illustration of the independance of the relation over the perturbation's size $\alpha$.}
		\label{fig:depth_areas}
	\end{figure}

	\subsubsection{Volume of the perturbation }

	Finally, let us study relation~\eqref{R3}. In \autoref{fig:volume_norm} (left) we plot the $L^2$-norm of the boundary data $\bfE^1 \times \bfn$ in terms of the volume of the perturbation $B$ with fixed center $\bfx_0 = (0,0,0)$ and different values of $\alpha \in \interval[open right]{0}{1}$, where $\bfE^1$ has been computed in the direction $\varrho = (0,\textbf{1}_{B})$. This agrees with the statement \eqref{R3} if we neglect the constant term in the affine relation. However, the linearity constant in \eqref{R3} is likely to depend on the depth $d$. We thus check numerically the value of the constant for different depths $d$. These data fit to a relation of exponential shape,
	\begin{equation}
		\label{eq:lin_const}
		K(d) = e^{p(d)}
	\end{equation}
	with a given polynomial $p$ of degree 2.

	\begin{figure}[p]
		\centering
		\includegraphics[width=0.4\textwidth]{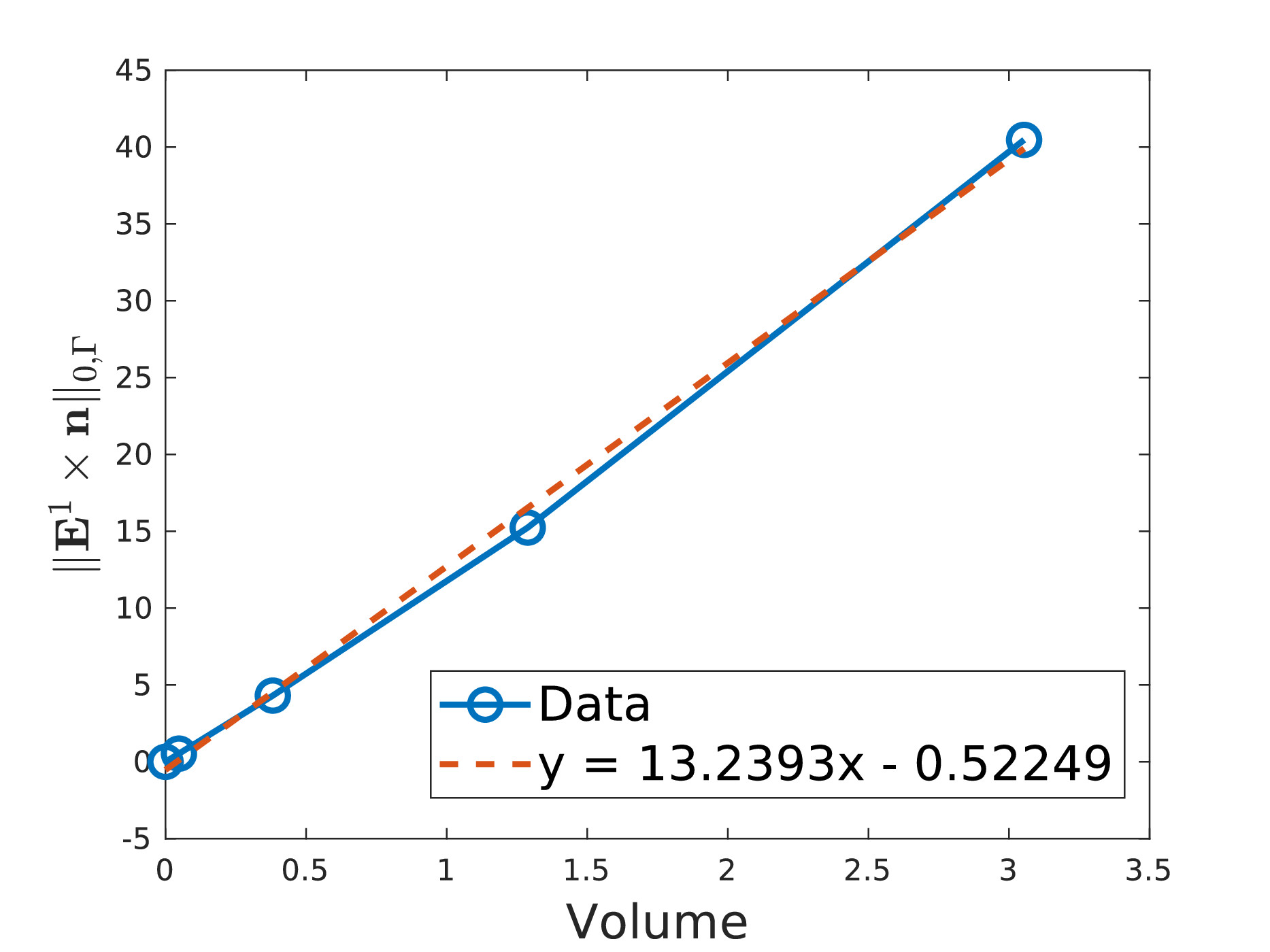}
		\includegraphics[width=0.4\textwidth]{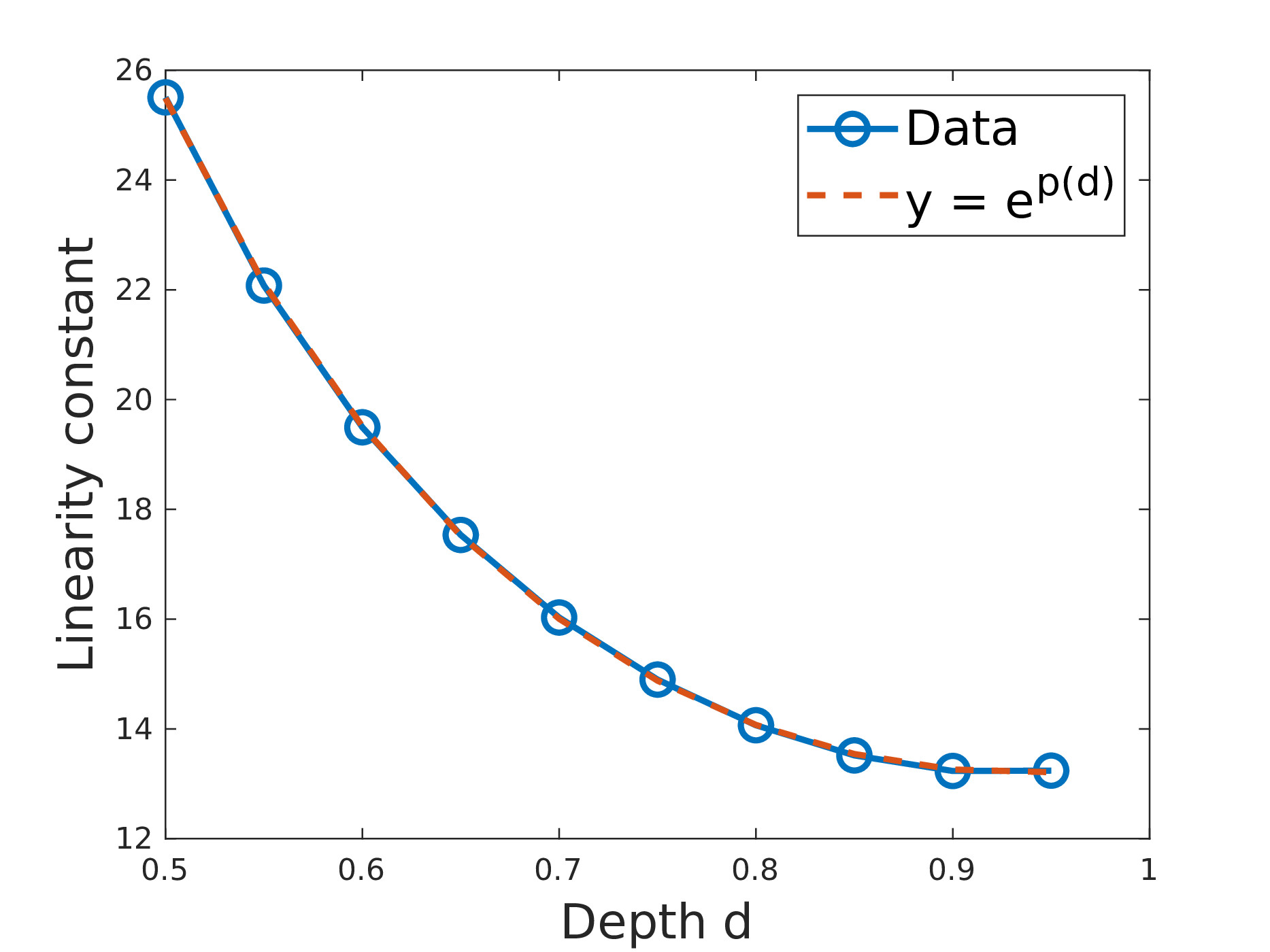}
		\caption{Illustration of the linear relation~\eqref{R3} (left). Evolution of the linearity constant with respect to the depth $d$ (right).}
		\label{fig:volume_norm}
	\end{figure}

	\section{The localization algorithm}
	\label{sec:algo}

	We develop a reconstruction algorithm of interior perturbations from the knowledge of the tangential trace $\bfE^1 \times \bfn$ of the sensitivity (of the electric field). The algorithm is based on the relations~\eqref{R1}, \eqref{R2}, and \eqref{R3} of the former section. Notice that the proposed algorithm could easily be applied to the case where the input data is the tangential trace of the perturbed field according to Taylor expansion~\eqref{eq:taylor} which is valid for small-amplitudes.

	\subsection{Database generation}

	A first step of the inversion algorithm consists in simulating a large number of possible spherical perturbations with same projection $\hat{x}$ and defined by their depth $d$ and their volume. We compute the corresponding boundary data $\bfE^1 \times \bfn$. By varying the inhomogeneity's parameters, we are able to estimate the coefficients of the polynomial functions in \eqref{R2} and \eqref{R3}. These coefficients are stored and will be used in the resolution phase. The database is generated with a given mesh $\mcM_\textrm{data}$. It is important to notice that this preliminary step is required only once for a given computational domain $\Omega$. It is described in \autoref{algo:db}.

	\begin{algorithm}
		\caption{Database generation}
		\label{algo:db}

		\KwIn{physical parameters $\eps$ and $\sigma$ of the background medium, frequency $\omega = \SI{1e6}{\hertz}$. Database mesh $\mcM_\textrm{data}$. Projection $\hat{\bfx}$ and projection direction $\tau$. Direction set $N_m$ of $m$ incident directions $\bfeta$.}
		\KwOut{coefficients of the polynomials $p_\vartheta$ and $p$.}

		\ForEach{incident direction $\bfeta \in N_m$}{
			\textbf{Step 1} \quad Compute the numerical solution $\bfE$ of \eqref{eq:inverse_E}.\;
			\textbf{Step 2} \quad Sample perturbations\;
			\Indp
			\For{$\alpha$ from $0$ to $0.5$}{
				\For{$d$ from $\alpha$ (excluded) to $1$}{
					1. Compute $\bfx_0 = \hat{\bfx} + d*\tau$.\;
					2. Compute the numerical solution $\bfE^1$ of the sensitivity equation~\eqref{eq:sensitivity} in the direction $(0,\textbf{1}_B)$ where $B = B_\alpha(\bfx_0)$.\;
					3. Compute the tangential trace $\bfE^1 \times \bfn$ on $\Gamma$.\;
				}
			}
			\Indm
			\textbf{Step 3} \quad Fit the coefficients of polynomials $p_\vartheta$ and $p$ to relations \eqref{R2} and \eqref{R3}.\;
		}
	\end{algorithm}

	\subsection{The inversion procedure}

	The algorithm is presented hereafter in the case of one single perturbation in the conductivity. The aim is to retrieve the following parameters from given (synthetic) boundary data $\bfE^1 \times \bfn$:
	\begin{itemize}
		\item $\hat{\bfx}$, the projection on the sphere of $\bfx_0$, the center of the interior perturbation,
		\item $d = |\bfx_0 - \hat{\bfx}|$, the depth of the center $\bfx_0$,
		\item the volume $v_B$ of the perturbation which yields the radius $\alpha$ in the case of a spherical perturbation.
	\end{itemize}

	\begin{figure}
		\centering
		\includegraphics[width=\textwidth]{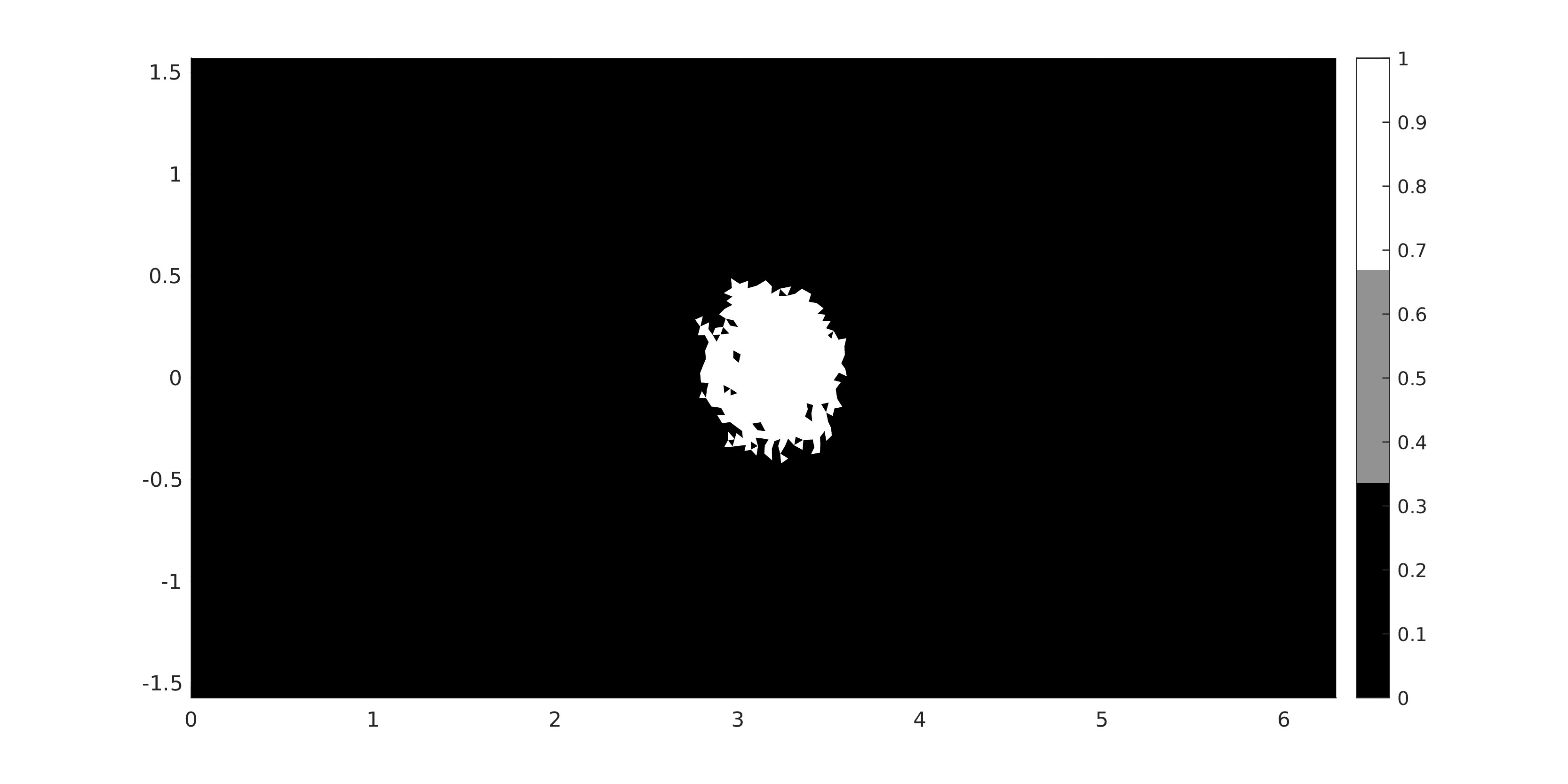}
		\caption{Example of a thresholded modulus of the trace $\bfE^1 \times \bfn$. The surfacic perturbations of largest amplitudes are shown in white.}
		\label{fig:algo_thresholded_trace}
	\end{figure}

	\begin{algorithm}
		\caption{Localization}
		\label{algo:loc}

		\KwIn{discrete (synthetic) boundary data $\bfE^1 \times \bfn$}
		\KwOut{projection $\hat{\bfx}$, depth $d$ and volume $v_B$ of the perturbation}
		\KwParam{threshold $\vartheta = 0.2$}

		\textbf{Step 1} \quad \textbf{projection $\hat{\bfx}$ on $\Gamma$ of the perturbation's center} \eqref{R1}\;
		\Indp
		1. Convert the thresholded modulus of the data (see \eqref{defGamma}) to a black-and-white image by threshold $\beta$ on the unfolded sphere (see \autoref{fig:algo_thresholded_trace}). The threshold $\beta$ is chosen relatively to the maximum value of the modulus $|(\bfE^1 \times \bfn)|$ (half of this value in the examples).\;
		2. Compute the convex hull $\mcC$ of the white pixels by Graham's algorithm \cite{Graham72} (or other).\;
		3. Let $\hat{\bfx}$ the isobarycenter of $\mcC$.\;
		\Indm
		\textbf{Step 2} \quad \textbf{depth $d$ of the perturbation} \eqref{R2}\;
		\Indp
		1. Compute the ratio $r = \area(\Gamma_{\vartheta}(d))/\area(\Gamma)$ from the data.\;
		2. Find $d$ such that $1/(1+e^{p_\vartheta(d)}) = r$.\;
		\Indm
		\textbf{Step 3} \quad \textbf{volume $v_B$ of the perturbation} \eqref{R3}\;
		\Indp
		1. Compute the $L^2$-norm $\norm{\bfE^1 \times \bfn}{0,\Gamma}$ of the boundary data.\;
		2. Compute $K = e^{p(d)}$ from \eqref{eq:lin_const} and $d$ from Step 2.\;
		3. Let $v_B = \norm{\bfE^1 \times \bfn}{0,\Gamma}/K$.\;
	\end{algorithm}

	\subsection{Numerical simulations}
	\subsubsection{Generation of synthetic boundary data}

	In the absence of measurements, we generate discrete synthetic boundary data in the following way.

	\begin{itemize}
		\item Fix a perturbation $B = B_\alpha(\bfx_0)$ with given center $\bfx_0$ and radius $\alpha$.
		\item For any direction $\bfeta$ in a given set $N_m$ of $m$ directions
			\begin{itemize}
				\item Compute the boundary data $\bfE^1 \times \bfn$ where
				$\bfE^1$ is the sensitivity in the direction $\varrho = (0,\textbf{1}_B)$ for the source term $\bfE_\eta$.
				\item Add some noise to the data (\autoref{ss:noisydata} only).
			\end{itemize}
		\item Take $\bfa = (\bfE^1 \times \bfn) + \textrm{noise}$ as input data for the inversion algorithm.
	\end{itemize}
	The following sets of directions are used in the sequel
	\begin{align*}
		N_1 &= \collection{(0,1,0)}, \\
		N_6 &= N_1 \cup \collection{(1,0,0), (0,0,1), (-1,0,0), (0,-1,0), (0,0,-1)}, \\
		N_{14} &= \begin{multlined}[t]
			N_6 \cup \collection{(1,1,1), (-1,1,1), (1,-1,1), (1,1,-1), \\
			(-1,-1,1), (-1,1,-1), (1,-1,-1), (-1,-1,-1)}.
		\end{multlined}
	\end{align*}

	In order to avoid an inverse crime (in the sense of \cite[p.~133]{CK98}), the boundary data are computed on a tetrahedral mesh $\mcM_\textrm{inv}$ of size $h = 0.12$ ($N_e = \num{167402}$ edges) which is different from the mesh used to generate the database. Notice also that we focus on perturbations of the conductivity parameter. We refer to \autoref{ss:twoparam} where both parameters,
	$\eps$ and $\sigma$, undergo a perturbation.

	\subsubsection{Spherical perturbation}
	\label{ss:spherical}

	We first apply \autoref{algo:loc} in a homogeneous background medium containing a spherical perturbation in the conductivity. We keep the physical settings defined in \autoref{sec:numerical_results}. In the case of multiple directions, we apply the algorithm on each direction $\bfeta \in N_m$ and compute the mean of the results.

	We first test our algorithm with a spherical perturbation centered at $\bfx_0 = (-0.7,0,0)$ and of radius $0.2$. The parameters to retrieve are then $\hat{\bfx} = (-1,0,0)$ (or, in spherical coordinates, $\hat{\bfx} = (\pi,0)$), $d = 0.3$ and $\alpha = 0.2$. We report the results in \autoref{tab:inverse_homogeneous_x}. The approximations of $\hat{\bfx}$, $d$ and $\alpha$ are respectively denoted by $\hat{\bfx}_h$, $d_h$ and $\alpha_h$. We choose the Euclidian norm of the spherical coordinates of the points $\hat{\bfx}$ and $\hat{\bfx}_h$ to compute the projection error.

	\begin{table}[hbt]
		\caption{Numerical resolution of the inverse problem in a homogeneous medium. One spherical perturbation in the conductivity, of center $\bfx_0 = (-0.7,0,0)$ (i.e. $\hat{\bfx} = (\pi,0)$) and radius $\alpha = 0.2$.}
		\label{tab:inverse_homogeneous_x}
		\centering
		\begin{tabular}{cccc}
			\toprule
			Incidences & $N_1$ & $N_6$ & $N_{14}$ \\
			\midrule
			$\hat{\bfx}_h$ & (3.156,0.018) & (3.152,0.015) & (3.151,0.009) \\
			$\frac{|\hat{\bfx} - \hat{\bfx}_h|}{|\hat{\bfx}|}$ & 7.502e-03 & 5.713e-03 & 4.173e-03 \\
			$d_h$ & 0.316 & 0.311 & 0.313 \\
			$\frac{|d - d_h|}{|d|}$ & 5.459e-02 & 3.728e-02 & 4.193e-02 \\
			$\alpha_h$ & 0.214 & 0.214 & 0.210 \\
			$\frac{|\alpha - \alpha_h|}{|\alpha|}$ & 6.943e-02 & 7.158e-02 & 5.037e-02 \\
			\bottomrule
		\end{tabular}
	\end{table}

	We retrieve the projection center $\hat{\bfx}$ with a very good accuracy (less than \pc{1} error). The approximation error on $\hat{\bfx}$ decreases with respect to the number of incident waves. The other perturbation's characteristics $d$ and $\alpha$ are well approximated, too (about \pc{4} to \pc{7} error). Their approximation does not really depend on the number of waves. In order to keep a reasonable number of computations (resp. measurements in the context of biomedial applications that we have in mind), we decide to work from now on with the set $N_6$ of 6 different incident waves (unless specified otherwise).

	We now test a spherical perturbation which is centered at a different point. We keep the parameters $d = 0.3$ and $\alpha = 0.2$, and consider the center $\bfx_0 = (0,-0.7,0)$ (which corresponds to $\hat{\bfx} = (0,-1,0)$. In \autoref{tab:inverse_homogeneous_y}, we compare the errors obtained with the two configurations. The relative errors of the two approximations are comparable. Changing the position of the perturbation does not affect the quality of the approximation. From now on, we thus will consider perturbations centered on the $x$-axis (unless indicated otherwise).

	\begin{table}[hbt]
		\caption{Comparison of the results for different centers. Relative errors for a spherical perturbation of radius $\alpha = 0.2$ in the conductivity.}
		\label{tab:inverse_homogeneous_y}
		\centering

		\begin{tabular}{cccc}
			\toprule
			Perturbation's center & $\frac{|\hat{\bfx} - \hat{\bfx}_h|}{|\hat{\bfx}|}$ & $\frac{|d - d_h|}{|d|}$ & $\frac{|\alpha - \alpha_h|}{|\alpha|}$ \\
			\midrule
			$\bfx_0 = (-0.7,0,0)$ & 5.713e-03 & 3.728e-02 & 7.158e-02 \\
			$\bfx_0 = (0,-0.7,0)$ & 5.551e-03 & 5.528e-02 & 9.906e-02 \\
			\bottomrule
		\end{tabular}
	\end{table}

	We next apply the algorithm for different depths and volums and report the errors in \autoref{tab:inverse_homogeneous_dr}. It may be stated that the accuracy of the reconstruction does not depend significantly on the depth or volume of the perturbation.

	\begin{table}[hbt]
		\caption{Comparison of the results for different depths and radii. Relative errors for a spherical perturbation in the conductivity.}
		\label{tab:inverse_homogeneous_dr}
		\centering

		\begin{tabular}{ccccc}
			\toprule
			Perturbation's center & $r$ & $\frac{|\hat{\bfx} - \hat{\bfx}_h|}{|\hat{\bfx}|}$ & $\frac{|d - d_h|}{|d|}$ & $\frac{|\alpha - \alpha_h|}{|\alpha|}$ \\
			\midrule
			$\bfx_0 = (-0.7,0,0)$ & $0.2$ & 5.713e-03 & 3.728e-02 & 7.158e-02 \\
			\multirow{2}{*}{$\bfx_0 = (-0.4,0,0)$} & $0.2$ & 9.658e-03 & 6.661e-03 & 1.354e-02 \\
			& $0.4$ & 1.110e-02 & 9.791e-03 & 1.057e-02 \\
			\bottomrule
		\end{tabular}
	\end{table}

	\subsubsection{Noisy data}
	\label{ss:noisydata}

	An important feature in numerical reconstruction is noise robustness. Noisy synthetic data are generate in the following way. Let ${(z_k)}_k = {(x_k + iy_k)}_k$ be the degrees of freedom of the sensitivity $\bfE^1$. We add noise independently in the real and imaginary parts. Let ${(n_k)}_k$ be a vector of real random numbers following a normal law $\mcN(0,\sigma)$. Then, we generate additive noise in the real part with the following formula:
	\[x^\text{noise}_k = x_k + \frac{M_x - m_x}{2}n_k + \frac{m_x + M_x}{2},\]
	where $m_x = \min\limits_k x_k$ and $M_x = \max\limits_k x_k$. The imaginary part is jittered in the same way.

	In \autoref{tab:inverse_noise}, we compare the results obtained by our algorithm using noisy or non-noisy data (with a single incident wave of direction $(0,1,0)$). As expected, noise affects the reconstruction. A numerical observation of the noisy boundary data allows to get deeper insight on the effect of noise and to propose a strategy for noise reduction. Indeed, in a normally distributed noise vector, a few coefficients may have large values. Consequently, there can be a small number of points on the boundary located far away from the projection $\hat{x}$, and at which the modulus $|\bfE^1 \times \bfn|$ is greater or equal than at the points around the projection $\hat{\bfx}$ (see \autoref{fig:trace_noise}). This causes problems to the first step of \autoref{algo:loc}. A solution is to apply a post-processing which consists in identifying outliers and deciding whether they should be retained or rejected. As before, we retrieve the points where the largest values of the modulus are reached. An outlier detection is applied to remove only those points which are not in the neighborhood of the principal peak (i.e. centered at $\hat{\bfx}$). For instance, the deviation around the median can be used (see~\cite{Leys13}). By applying this simple algorithm twice, we reduce the number of outliers significantly. In \autoref{tab:inverse_noise_denoised}, we report the reconstruction results obtained from the noisy data that have undergone the outlier detection, compared to resultas from non-noisy data. Up to \pc{2} of noise, the results are not affected by the noise, and the projection $\hat{x}$ is retrieved with a similar precision even for \pc{5} of noise. At \pc{10} of noise, the projection is found with \pc{12} error, but the precision of the other parameters is no longer significant. This first study of noisy data indicates that the detection of outliers is an interesting and promising approach, and different noise reduction methods could be tested to improve the results. This was however beyond the scope of this paper.

	\begin{figure}[hbt]
		\centering
		\includegraphics[width=\textwidth]{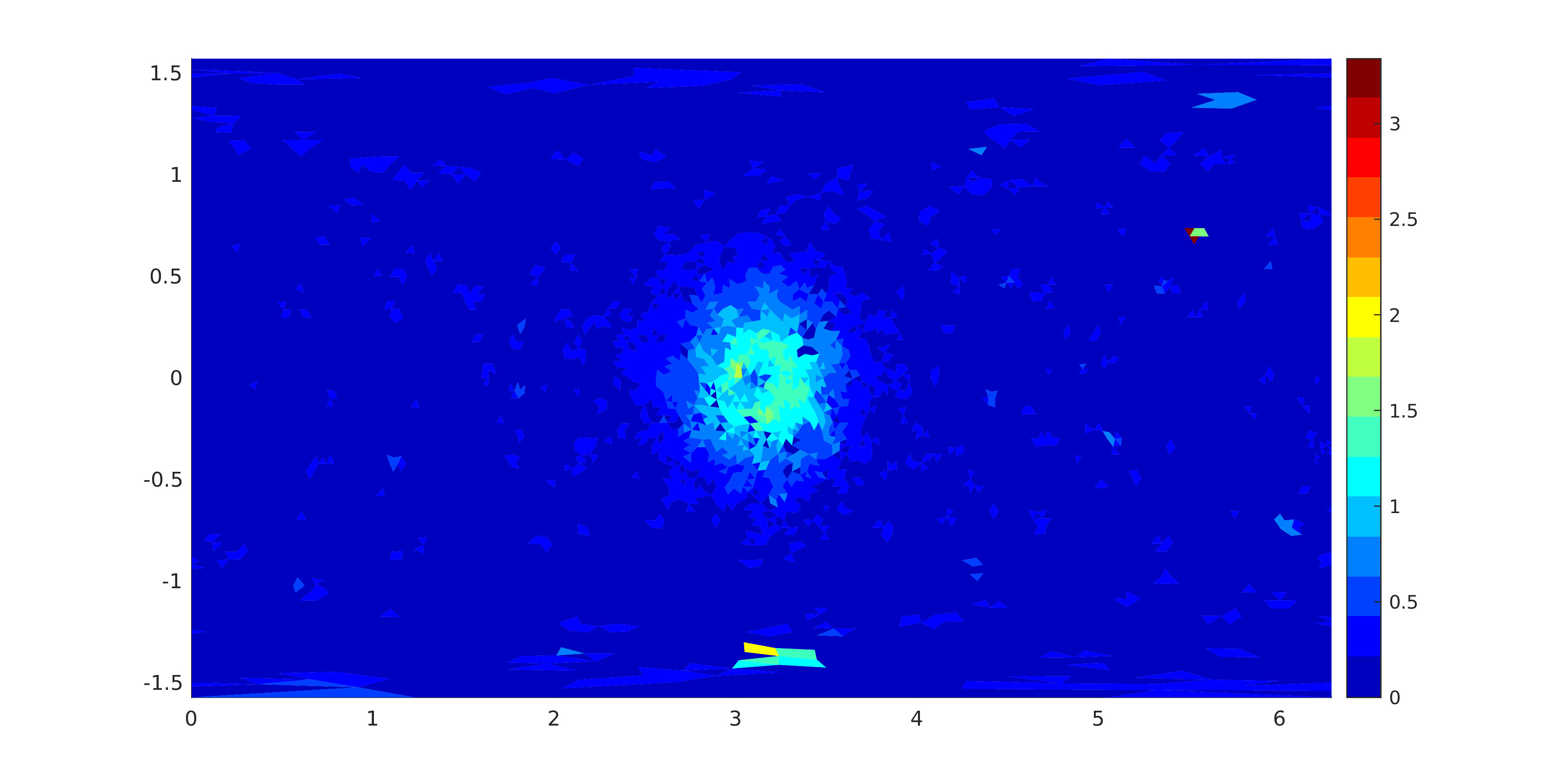}
		\caption{Example of the modulus of a noised trace (here with \pc{2} of noise). Some points with a value bigger than the central peak can be observed.}
		\label{fig:trace_noise}
	\end{figure}

	\begin{table}[hbt]
		\caption{Reconstruction of one spherical perturbation centered at $\bfx_0 = (-0.7,0,0)$ of radius $r = 0.2$ from non-noisy or noisy data.}
		\label{tab:inverse_noise}
		\centering

		\begin{tabular}{cccc}
			\toprule
			& $\frac{|\hat{\bfx} - \hat{\bfx}_h|}{|\hat{\bfx}|}$ & $\frac{|d - d_h|}{|d|}$ & $\frac{|\alpha - \alpha_h|}{|\alpha|}$ \\
			\midrule
			Non-noisy data & 7.502e-03 & 5.459e-02 & 7.474e-02 \\
			Noisy data ($\sigma = \pc{2}$) & 2.835e-01 & 4.945e-01 & 1.243e-01 \\
			\bottomrule
		\end{tabular}
	\end{table}

	\begin{table}[hbt]
		\caption{Reconstruction of one spherical perturbation centered at $\bfx_0 = (-0.7,0,0)$ of radius $r = 0.2$ from non-noisy or noisy data (after the detection
		of outliers).}
		\label{tab:inverse_noise_denoised}
		\centering

		\begin{tabular}{cccc}
			\toprule
			& $\frac{|\hat{\bfx} - \hat{\bfx}_h|}{|\hat{\bfx}|}$ & $\frac{|d - d_h|}{|d|}$ & $\frac{|\alpha - \alpha_h|}{|\alpha|}$ \\
			\midrule
			Non-noisy data & 7.502e-03 & 5.459e-02 & 7.474e-02 \\
			Denoised data ($\sigma = \pc{1}$) & 3.942e-03 & 4.587e-02 & 6.175e-02 \\
			Denoised data ($\sigma = \pc{2}$) & 4.826e-03 & 4.541e-02 & 6.286e-02 \\
			Denoised data ($\sigma = \pc{5}$) & 4.642e-03 & 2.244e-01 & 1.711e-01 \\
			Denoised data ($\sigma = \pc{10}$) & 1.241e-02 & 9.144e-01 & 5.082e-01 \\
			\bottomrule
		\end{tabular}
	\end{table}

	\subsubsection{Simultaneous reconstruction of both parameters}
	\label{ss:twoparam}

	We test our algorithm for determining a perturbation in the conductivity and the permittivity, located in the same sphere of center $\bfx_0 = (-0.7,0,0)$ and of radius $\alpha = 0.2$. This happens for instance when a stroke occurs in the brain. In \autoref{tab:inverse_two}, we compare the results with the ones obtained where only the conductivity is perturbed. It has to be noted that only perturbations in the conductivity have been used to generate the database. The good accuracy on the approximated center and depth (i.e. the localization error) is preserved. We observe a slight loss of precision for the radius: for an exact radius of $\alpha = 0.2$, we find $\alpha_h = 0.242$ instead of $\alpha_h = 0.214$ if only the conductivity undergoes a perturbation. This is due to the fact that the $L^2$-norm of the tangential trace (used in~\eqref{R3}) is bigger if both parameters are perturbed. A solution could be to construct a database relative to perturbations on both parameters. Nevertheless, the present study shows that the reconstruction is satisfactory even if the database does not take into account information about which parameter is perturbed.

	\begin{table}[hbt]
		\caption{Perturbation in both the conductivity and the permittivity, centered at $\bfx_0 = (-0.7,0,0)$ of radius $\alpha = 0.2$.}
		\label{tab:inverse_two}
		\centering

		\begin{tabular}{cccc}
			\toprule
			Perturbed parameter(s) & $\frac{|\hat{\bfx} - \hat{\bfx}_h|}{|\hat{\bfx}|}$ & $\frac{|d - d_h|}{|d|}$ & $\frac{|\alpha - \alpha_h|}{|\alpha|}$ \\
			\midrule
			Conductivity & 5.713e-03 & 3.728e-02 & 7.158e-02 \\
			Conductivity \& permittivity & 5.666e-03 & 3.813e-02 & 2.080e-01 \\
			\bottomrule
		\end{tabular}
	\end{table}

	\subsubsection{Reconstruction of two perturbations}

	As stated in \autoref{prop:disjoint}, in the case of $n > 1$ disjoint perturbations, the total sensitivity can be separated into $n$ sensitivities, each representing one perturbation. We use this property to handle the configuration with two or more inhomogeneities. To this end, we proceed as follows. The entry data is the trace $\bfE^1 \times \bfn$ which is containing $n$ surfacic perturbations (see \autoref{fig:sensitivity_xy}). The first step is to detect the different amplitude clusterings. This can be achieved by applying a detection algorithm such as DBSCAN which yields the connected components of the thresholded data. DBSCAN is used in data mining (see~\cite{EsterKriegelSanderXu96}) and has the advantage that the a priori knowledge of $n$ is not needed. Next, for each connected component, we build an artificial piecewise trace of the sensitivity: it equals the values of the original trace on the region of the connected component, and is zero otherwise. Finally, we use this new trace as the entry of our localization procedure. In \autoref{tab:inverse_disjoint}, we show the results for two disjoint spherical perturbations in the conductivity. One is centered at $\bfx_{0,1} = (-0.18,0.41,0)$ with radius $\alpha_1 = 0.35$ and the other at $\bfx_{0,2} = (0,-0.7,0)$ with $\alpha_2 = 0.2$. Both perturbations are very well localized, and the volume of the biggest one is better approximated. This procedure will work if the perturbations give raise to well separated projections on the boundary.

	\begin{table}[hbt]
		\caption{Reconstruction of two disjoint perturbations in the conductivity.}
		\label{tab:inverse_disjoint}
		\centering

		\begin{tabular}{cccc}
			\toprule
			Perturbation's center & $\frac{|\hat{\bfx} - \hat{\bfx}_h|}{|\hat{\bfx}|}$ & $\frac{|d - d_h|}{|d|}$ & $\frac{|\alpha - \alpha_h|}{|\alpha|}$ \\
			\midrule
			$\bfx_{0,1} = (-0.18,0.41,0)$ & 8.556e-03 & 2.829e-04 & 3.599e-02 \\
			$\bfx_{0,2} = (0,-0.7,0)$ & 2.031e-03 & 9.596e-02 & 1.334e-01 \\
			\bottomrule
		\end{tabular}
	\end{table}

	\subsubsection{Ellipsoidal perturbation}
	\label{ss:ellipsoidal}

	In real life applications, the shape of the perturbation is in general not known. In this subsection, we want to retrieve an ellipsoidal perturbation. It is centered at $\bfx_0 = (-0.4,0,0)$, of $x$-radius 0.2, $y$-radius 0.4 and $z$-radius 0.2 which yields a volume $v = \scnum{6.702e-02}$. We recall that only spherical perturbations have been considered to generate the database. We report the results in \autoref{tab:inverse_ellips} and compare them with the results obtained in the case of a sphere with same volume. This reconstruction is illustrated in \autoref{fig:inv_ellips}. The ellipsoidal shape does not really affect the precision of the approximations. This may indicate that the polynomials in relations \eqref{R2} and \eqref{R3} are independant from the shape of perturbation.

	\begin{table}[hbt]
		\caption{Reconstruction of an ellipsoidal perturbation in the conductivity, centered at $\bfx_0 =(-0.4,0,0)$ of volume $v=\scnum{6.702e-02}$.}
		\label{tab:inverse_ellips}
		\centering

		\begin{tabular}{cccc}
			\toprule
			Perturbation's shape & $\frac{|\hat{\bfx} - \hat{\bfx}_h|}{|\hat{\bfx}|}$ & $\frac{|d - d_h|}{|d|}$ & $\frac{|v - v_h|}{|v|}$ \\
			\midrule
			Sphere & 9.482e-03 & 8.522e-03 & 1.344e-02 \\
			Ellipsoid & 2.725e-04 & 2.993e-02 & 3.742e-01 \\
			\bottomrule
		\end{tabular}
	\end{table}

	\begin{figure}[hbt]
		\centering
		\includegraphics[width=0.49\textwidth]{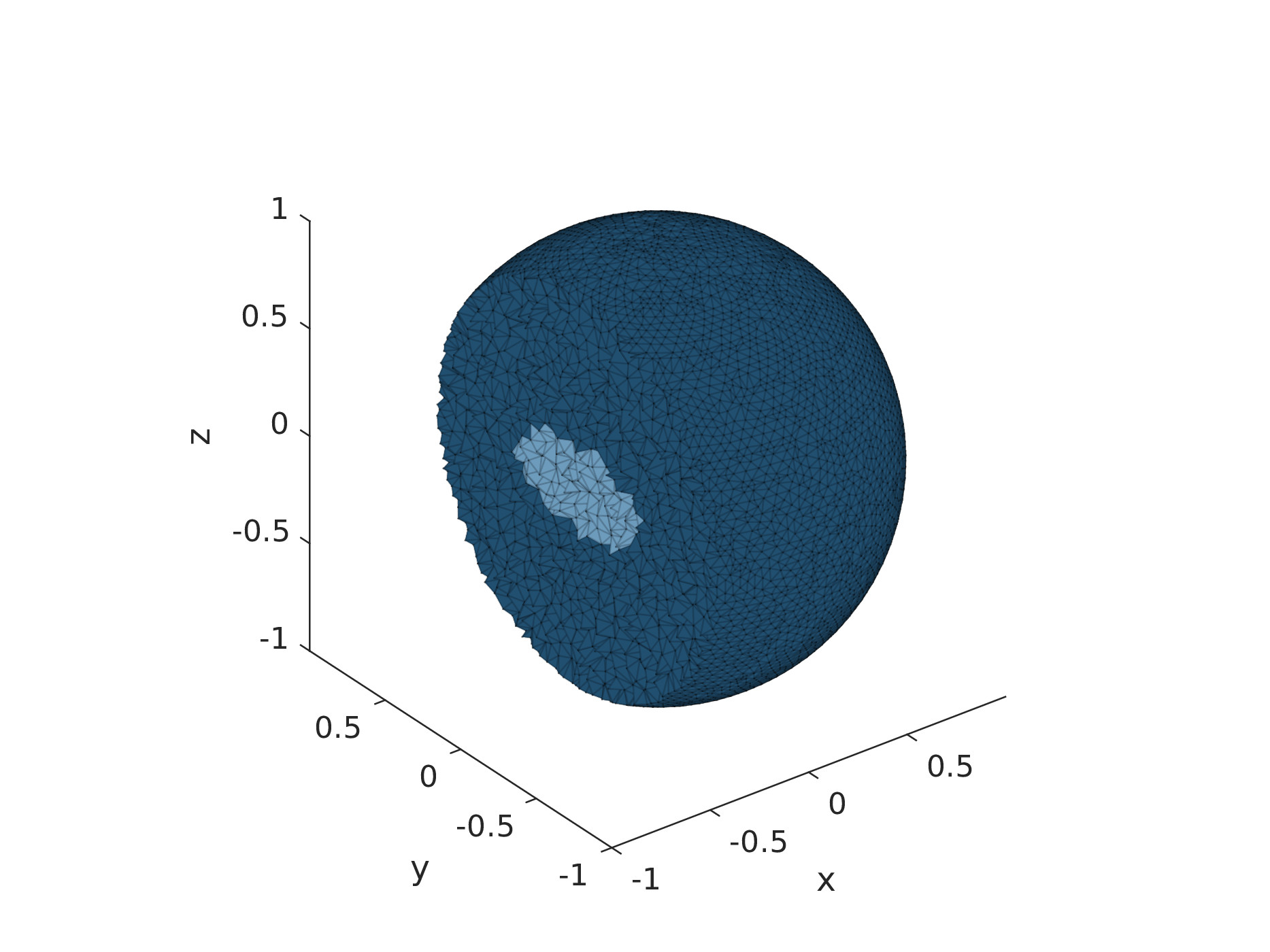}
		\hfill
		\includegraphics[width=0.49\textwidth]{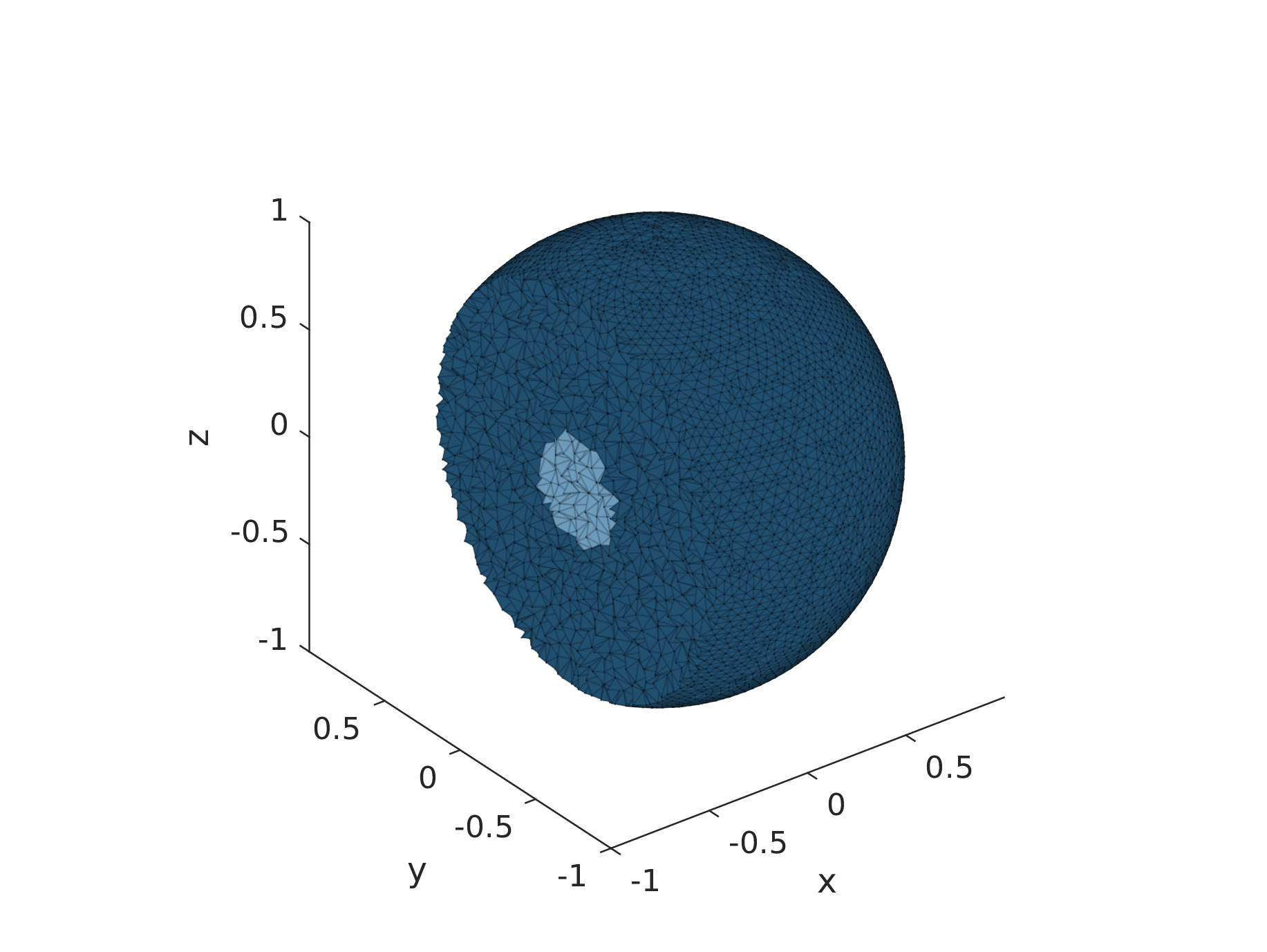}
		\caption{Localization of a perturbation of ellipsoidal shape. Left: induced ellipsoidal perturbation. Right: retrieved perturbation (ball of equivalent volume).}
		\label{fig:inv_ellips}
	\end{figure}

	\subsubsection{Three-layer head spherical model}

	In the biomedical applications we have in mind, e.g. for the diagnostic of strokes, it is important to take into account the heterogeneity of the medium. A classical spherical head model is commonly used in the literature. This model is built of three concentric spheres representing brain, skull and scalp (see \autoref{fig:threelayers}). The parameters of this model are described in \autoref{tab:threelayers_def}. We simulate a spherical inhomogeneity in the conductivity of the brain layer, centered at $\bfx_0 = (-0.57,0,0)$ of radius $\alpha = 0.2$. The errors are reported in \autoref{tab:inverse_threelayers} and compared to the reconstruction results of the same perturbation in a homogeneous background. The approximations are of the same order. The theory has been developped in a case of a homogeneous background but the localization algorithm still offers good results in more realistic configurations. We emphasize that the polynomials in the database were generated with the piecewise constant background conductivity.

	\begin{figure}[hbt]
		\centering
		\includegraphics[width=0.32\textwidth]{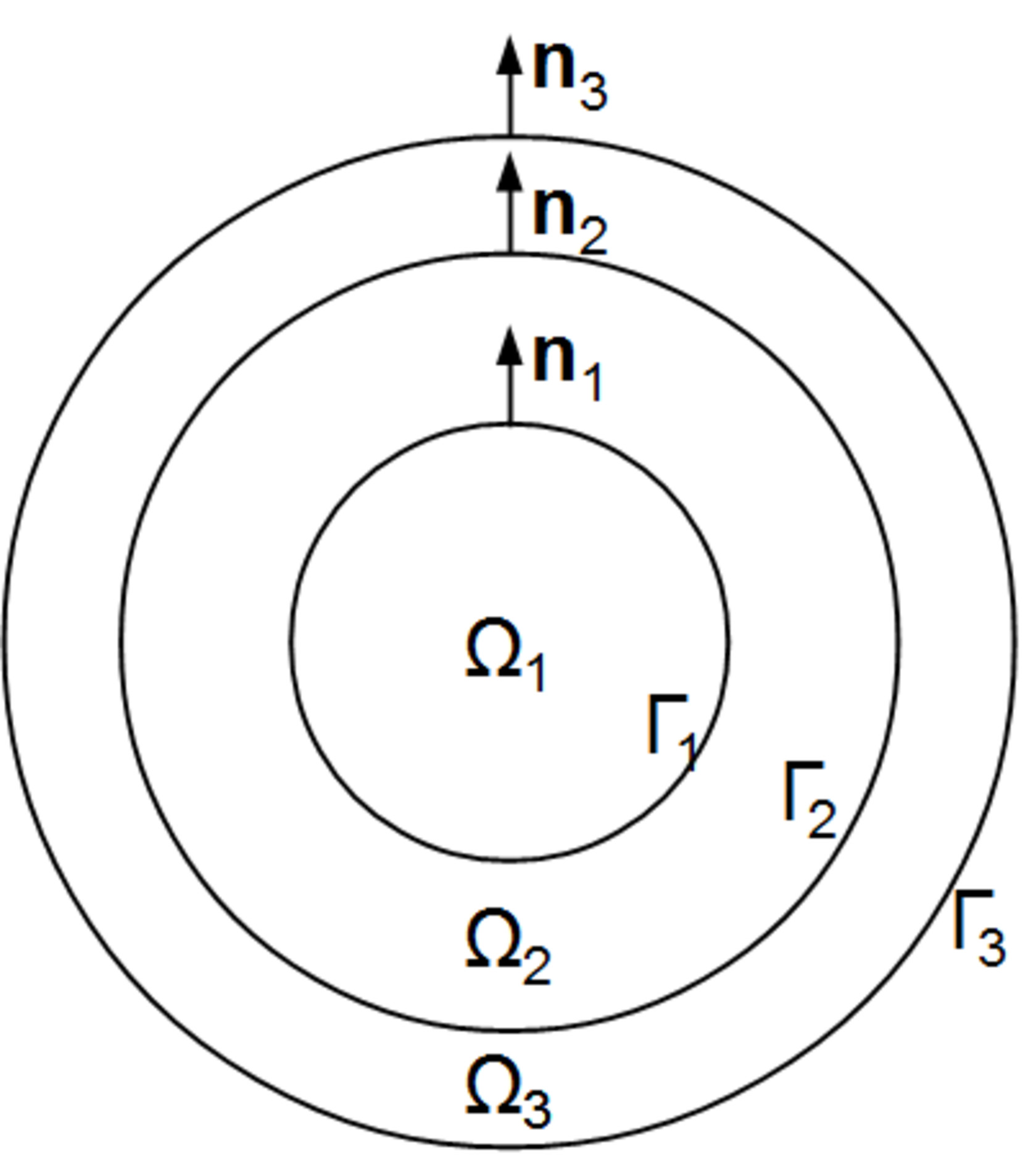}
		\caption{Three-layer head spherical model}
		\label{fig:threelayers}
	\end{figure}

	\begin{table}[hbt]
		\caption{Parameters of the three-layers head model}
		\label{tab:threelayers_def}
		\centering

		\begin{tabular}{cccc}
			\toprule
			Layer & Brain ($\Omega_1$) & Skull ($\Omega_2$) & Scalp ($\Omega_3$) \\
			\midrule
			Definition & $B_{0.87}(0)$ & $B_{0.92}(0) \setminus \Omega_1$ & $\Omega \setminus (\Omega_1 \cup \Omega_2)$ \\
			Permittivity (\si{\farad\per\meter}) & $\num{8.854e-10}$ & $\num{3.542e-10}$ & $\num{8.854e-11}$ \\
			Conductivity (\si{\siemens\per\meter}) & $0.33$ & $0.04$ & $0.33$ \\
			\bottomrule
		\end{tabular}
	\end{table}

	\begin{table}[hbt]
		\caption{Three-layer spherical head model. Reconstruction of a spherical perturbation in the conductivity centered at $\bfx_0 = (-0.57,0,0)$ of radius $\alpha = 0.2$.}
		\label{tab:inverse_threelayers}
		\centering

		\begin{tabular}{cccc}
			\toprule
			Medium & $\frac{|\hat{\bfx} - \hat{\bfx}_h|}{|\hat{\bfx}|}$ & $\frac{|d - d_h|}{|d|}$ & $\frac{|\alpha - \alpha_h|}{|\alpha|}$ \\
			\midrule
			Homogeneous & 3.616e-03 & 5.432e-02 & 2.845e-02 \\
		 	Heterogeneous & 1.164e-02 & 3.326e-02 & 8.000e-02 \\
			\bottomrule
		\end{tabular}
	\end{table}

	\subsubsection{A realistic head model}

	Finally, we consider a realistic head mesh. We use the Colin27 adult brain atlas (version~2, see~\cite{Colin27,Fang2010}). As shown in \autoref{fig:head_mesh}, the tetrahedrons are smaller in some regions, to reflect the complexity of the brain. The elements are of size between \num{4.67e-5} and \SI{2e-2}{\meter}, leading to a total number of $N_t = \num{425224}$ tetrahedrons and $N_e = \num{499136}$ edges. We generate a new database with this mesh in the case of one perturbation in a homogeneous background. We refer to \autoref{sec:numerical_results} for the values of the physical parameters. Data used for the inverse problem are generated with another head mesh, slightly different, of same mesh size but with $N_e = \num{535921}$ edges. The errors are reported in \autoref{tab:inverse_head}. In \autoref{fig:inverse_head}, we compare graphically the expected perturbation and the approximated one. These results show that the algorithm is not limited to the academic case of the unit ball, but can also be applied on more realistic geometries.

	\begin{table}[hbt]
		\caption{Realistic head mesh. Reconstruction of a spherical perturbation in the conductivity.}
		\label{tab:inverse_head}
		\centering

		\begin{tabular}{cccc}
			\toprule
			Domain geometry & $\frac{|\hat{\bfx} - \hat{\bfx}_h|}{|\hat{\bfx}|}$ & $\frac{|d - d_h|}{|d|}$ & $\frac{|\alpha - \alpha_h|}{|\alpha|}$ \\
			\midrule
			Head & 7.007e-03 & 2.708e-03 & 7.198e-03 \\
			\bottomrule
		\end{tabular}
	\end{table}

	\begin{figure}[hbt]
		\centering
		\includegraphics[width=0.49\textwidth]{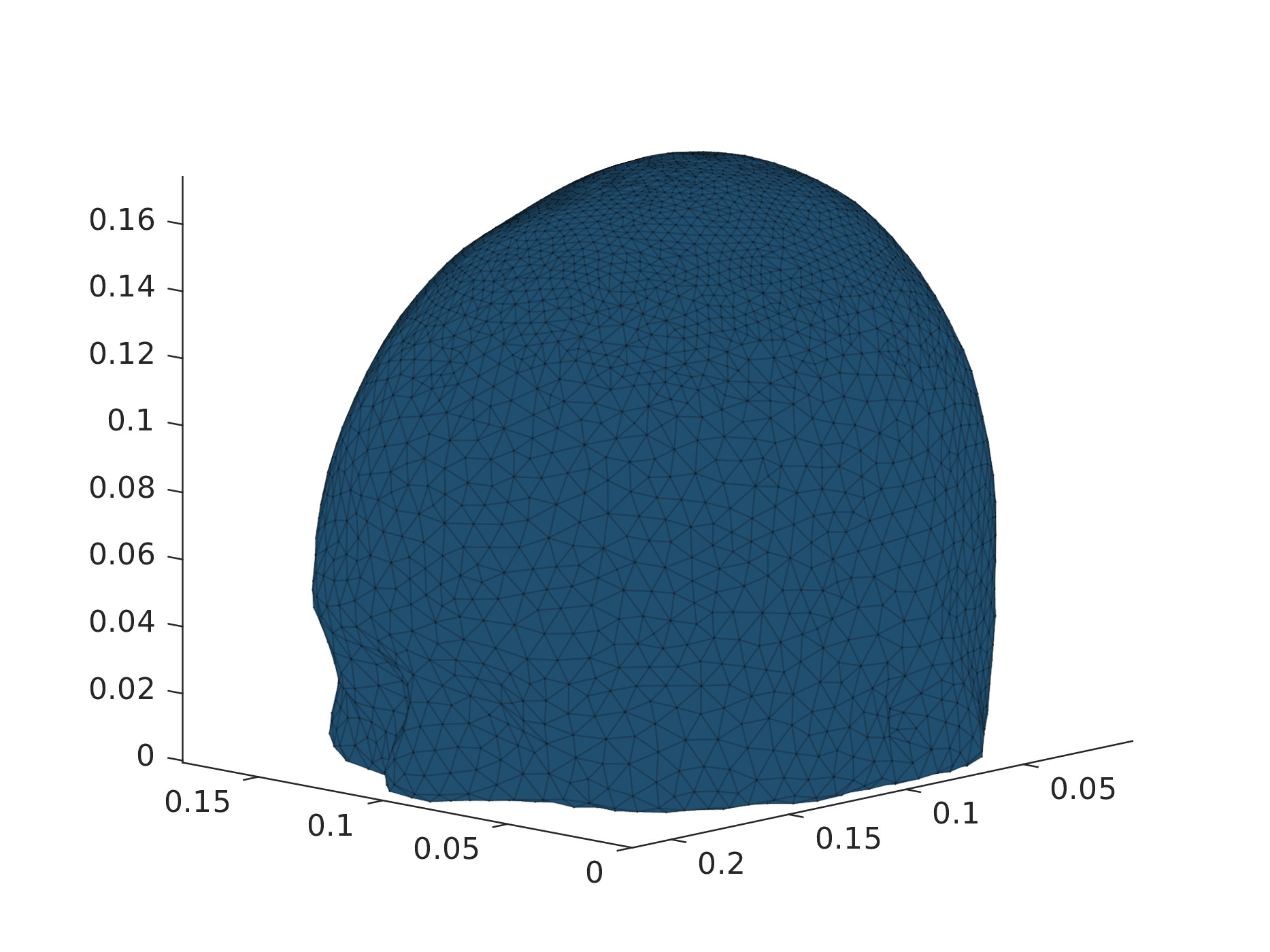}
		\hfill
		\includegraphics[width=0.49\textwidth]{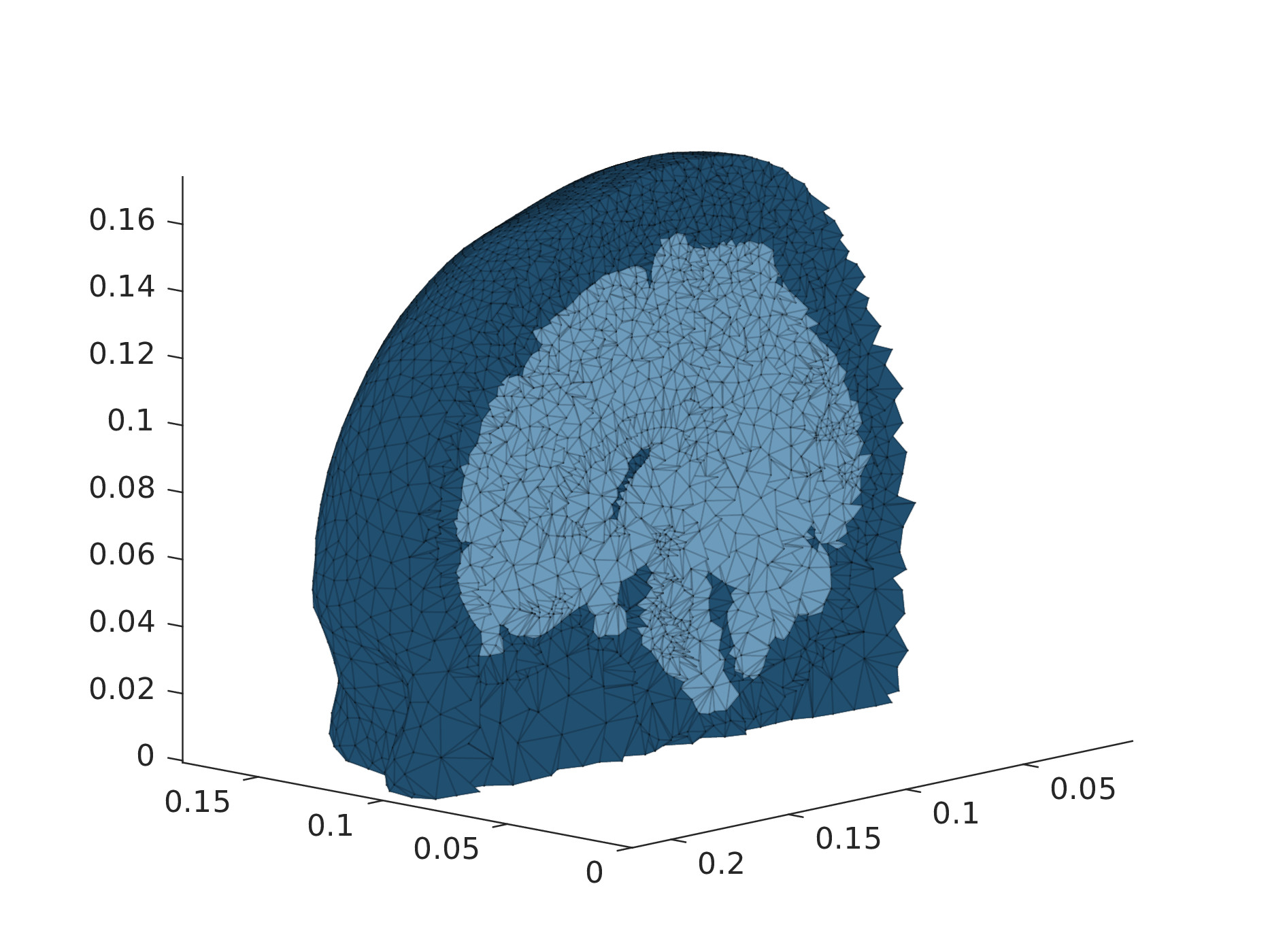}
		\caption{Realistic head model. Left: view on the boundary. Right: cut in the middle of the $x$-axis.}
		\label{fig:head_mesh}
	\end{figure}

	\begin{figure}[hbt]
		\centering
		\includegraphics[width=0.49\textwidth]{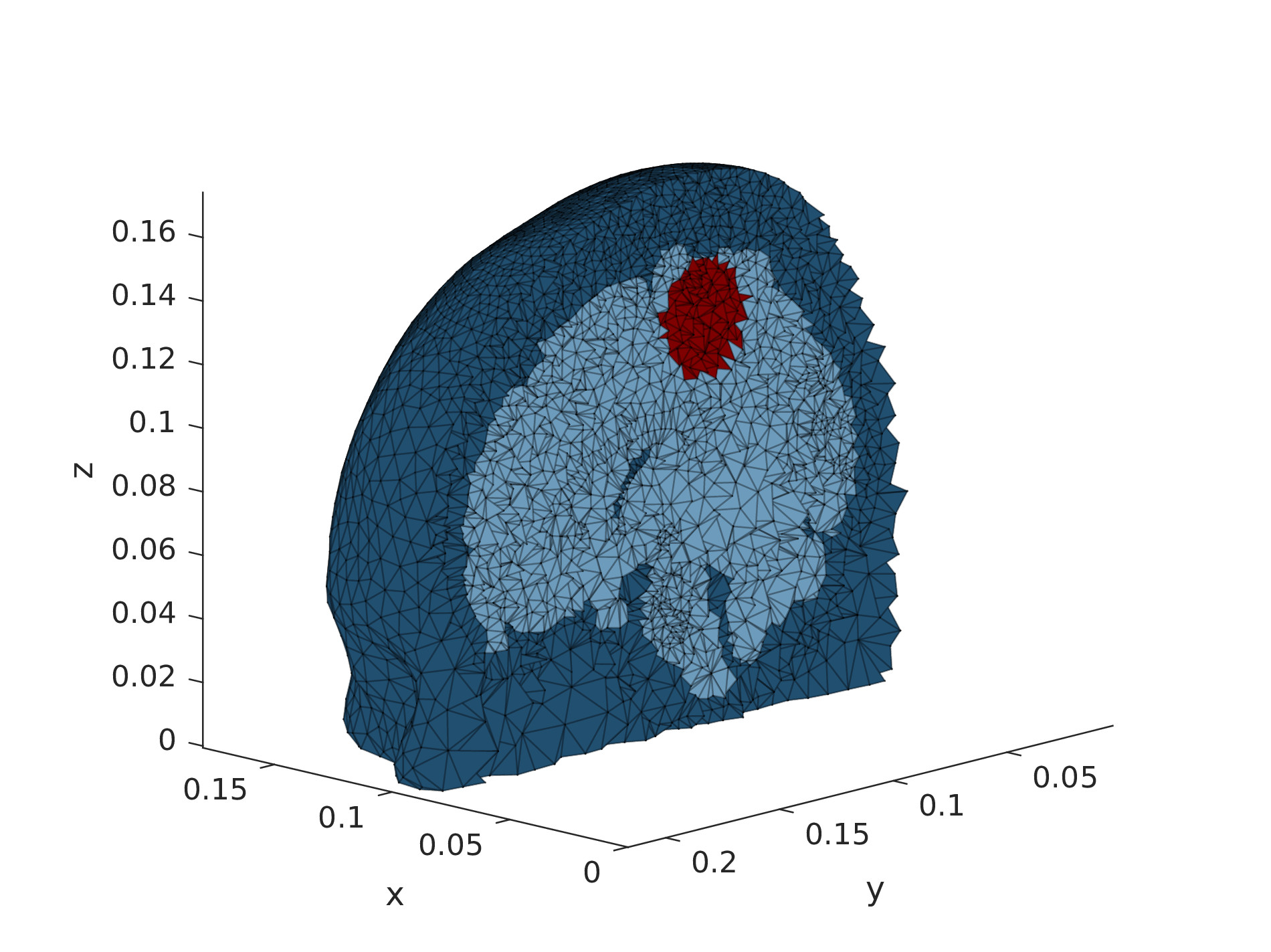}
		\hfill
		\includegraphics[width=0.49\textwidth]{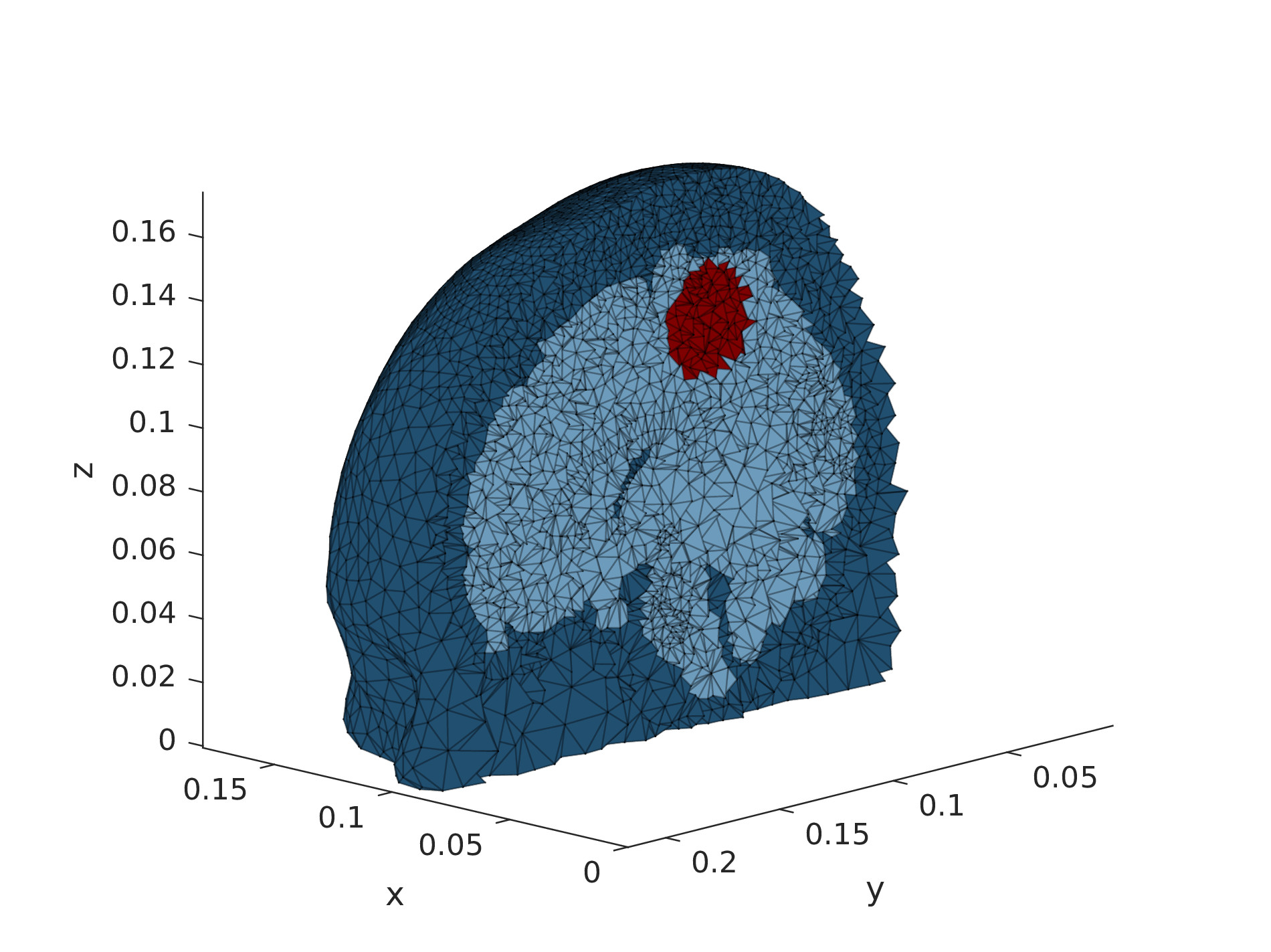}
		\caption{Realistic head mesh. Localization (in red) of a spherical perturbation in the conductivity. Left: expected perturbation. Right: result of the algorithm. The brain region, where the tetraedrons are smaller, is shown in lightblue for more readibility.}
		\label{fig:inverse_head}
	\end{figure}

	An interesting fact is that the computation time is still reasonable with this mesh, which is finer than the ball we used for the other tests. In all the tests we present here, the localization procedure is achieved in a few seconds on a personal computer (quad-core processor clocked at \SI{2.5}{\giga\hertz}, with \SI{4}{\gibi\byte} of RAM).

	\section{Conclusion and Future Works}

	In this paper, we have proposed a new and efficient algorithm for localizing small-amplitude perturbations in the electric parameters of a medium from boundary field measurements at a fixed frenquency. The approach is based on a rigourous sensitivity analysis of the electric field with respect to the variations of the permittivity and conductivity. Sensitivity is proportional to the boundary measurements of the physical field for perturbations of arbitrary shape but small amplitudes. We have proved, both theoretically and numerically, that the trace of the sensitivity of the electric field (on the boundary of the domain) contains relevant informations on the perturbations in the medium. Up to our knowledge, this is the first time that this kind of sensitivity analysis for the 3D Maxwell equations is used in the reconstruction of parameter perturbations. From an integral equation in a homogeneous background and extensive numerical simulations, we have obtained explicit relations between the sensitivity and some characteristics (center and volume) of the perturbations. These relations lead to a constructive algorithm for determining the center, the depth and the volume of inhomogeneities in the permittivity and/or the conductivity of a medium. Its implementation  makes use of diverse tools from scientific computing as, for example, 3D finite element discretization, geometric algorithms, or outlier detection for noise reduction. A large variety of three-dimensional numerical results attests the efficiency of the method: one or two perturbations with different locations and sizes, perturbation in the permittivity and/or conductivity, spherical or ellipsoidal defects, non-noisy or noisy data, a constant or piecewise constant background. The study of a general variable background would be an interesting perspective for future work. Furthermore, in view of biomedical applications (e.g. microwave imaging), we have also provided simulations in the case where the computational domain is the head. Two types of head models have been considered: the classical three-layer spherical model and a realistic one. In each configuration, the perturbations are localized with a very good accuracy, and information on its volume are obtained, too. This non-iterative localization procedure would be an interesting initial guess for gradient-based descent algorithms intended to retrieve the physical coefficient values in the perturbations. This is part of ongoing work.

	All the results have been obtained at a fixed frequency. Considering a non-ionising electromagnetic radiation, a possible application of our work is to discriminate between healthy and abnormal brain tissues. This can be achieved with a single frequency and changing the frequency does not provide a priori more information, excepted if the frequency dependance of the effective electric properties of the tissue is known. In this case, a multrifrequency approach could be interesting. Recent works on the multifrequency electrical impedance tomography (mfEIT) have been addressed~\cite{Alberti, Ammari18}.

\end{document}